\definecolor{darkblue}{rgb}{0.0,0.7,0} 
\definecolor{darkred}{rgb}{.58,0,.83}
\newcommand{\darkred}{\color{darkred}}
\numberwithin{equation}{section}
\newtheorem{theorem}{Theorem}[section]
\newtheorem{proposition}[theorem]{Proposition}
\newtheorem{corollary}[theorem]{Corollary}
\newtheorem{lemma}[theorem]{Lemma}
\newtheorem{conjecture}[theorem]{Conjecture}
\theoremstyle{definition}
\newtheorem{definition}[theorem]{Definition}
\newtheorem{example}[theorem]{Example}
\theoremstyle{remark}
\newtheorem{remark}[theorem]{Remark}
\crefname{figure}{Figure}{Figures}
\renewcommand{\t}{\mathsf{t}}
\newcommand{\w}{\mathsf{w}}
\renewcommand{\u}{\mathsf{u}}
\newcommand{\T}{\textsc{tree}}
\newcommand{\F}{\textsc{fact}}
\newcommand{\Ft}{\textsc{f}\widetilde{\textsc{a}}\textsc{ct}}
\renewcommand{\S}{\textsc{sub}}
\newcommand{\E}{\T}
\newcommand{\FF}{\mathbb{F}}
\newcommand{\Fact}{\textsc{fact}}
\newcommand{\Inv}{\textsc{inv}}
\newcommand{\AS}{\widetilde{S}}
\newcommand{\hw}{\widehat{w}}
\newcommand{\EE}{\underline{\E}}
\newcommand{\EFt}{\underline{\Ft}}
\newcommand{\rrT}{\rr^T}
\newcommand{\tr}{\mathrm{tr}}
\newcommand{\ww}{\widetilde{w}}
\newcommand{\rank}{r}
\renewcommand{\DH}{\mathrm{DH}}
\renewcommand{\mod}{\;\mathrm{mod}\;}
\newcommand{\rr}{\mathsf{r}}
\newcommand{\rru}{\rr^\u}
\newcommand{\Trr}{T^\rr}
\newcommand{\rrk}[1][k]{\mathrm{nb}(\rr,#1)}
\newcommand{\Cat}{\mathrm{Cat}}
\newcommand{\rrs}{\rr^k_1}
\newcommand{\rrf}{\rr^k_2}
\newcommand{\rrr}{r}
\newcommand{\blambda}{\bm{\lambda}}
\newlist{thmlist}{enumerate}{1}
\setlist[thmlist]{label=(\arabic{thmlisti}), ref=\thetheorem(\arabic{thmlisti}),noitemsep}
\newcommand{\defn}[1]{\emph{\darkred #1}}
\newcommand{\crefitem}[2]{%
  \hyperref[#2]{\namecref{#1}~\labelcref*{#1}~\ref*{#2}}%
}
\renewcommand{\arraystretch}{1.2}
\newcommand\cc{\cellcolor{darkblue!20}}
\newcommand\cd{\cellcolor{darkred!20}}
\newcommand\ten{\raisebox{.15\height}{\scalebox{.8}{10}}}
\newcommandx{\polygon}[7][7=1.25]{

  \node[circle,minimum size=#5cm] (#2) at  #1 {};

  \foreach \t in {1,...,#3} {
    \coordinate (#2\t) at ($#1+(90-\t*360/#3:#4)$);
  }
  \draw[thin,black,fill=white,opacity=0.3,densely dashed] #1 circle (#4);
  \setcounter{intege}{1}
  \pgfmathsetcounter{intege}{1}
  \foreach \object in {#6}{
    \ifthenelse{\not\equal{\object}{}}{
      \filldraw[black] ($#1+($(90-\theintege*360/#3:#4)$)$) circle(2pt);
    }{
    }
    \node[inner sep=0pt] at ($#1+($#7*(90-\theintege*360/#3:#4)$)$) {\small$\object$};
    \pgfmathsetcounter{intege}{\theintege+1}
    \setcounter{intege}{\theintege}
  }
}
\newcounter{intege}
\title[An elaborate new proof of Cayley's formula]{An elaborate new proof of Cayley's formula}
\author[Banaian]{Esther Banaian}
\address[Banaian]{Aarhus University}\email{banaian@math.au.dk}
\author[Hoang]{Anh Trong Nam Hoang}
\address[Hoang]{University of Minnesota}\email{hoang278@umn.edu}
\author[Kelley]{Elizabeth Kelley}
\address[Kelley]{University of Illinois Urbana Champaign}\email{kelleye@illinois.edu}
\author[Miller]{Weston Miller}
\address[Miller]{University of Texas at Dallas}
\email{weston.miller@utdallas.edu}
\author[Stack]{Jason Stack}
\address[Stack]{University of Texas at Dallas}
\email{jason.stack@utdallas.edu}
\author[Stephen]{Carolyn Stephen}
\address[Stephen]{University of Minnesota}\email{csteph@umn.edu}
\author[Williams]{Nathan Williams}
\address[Williams]{University of Texas at Dallas}
\email{nathan.williams1@utdallas.edu}
\begin{document}

\begin{abstract}
We construct a bijection between certain Deodhar components of a braid variety constructed from an affine Kac-Moody group of type $A_{n-1}$ and vertex-labeled trees on $n$ vertices.   By an argument of Galashin, Lam, and Williams using Opdam's trace formula in the affine Hecke algebra and an identity due to Haglund, we obtain an elaborate new proof for the enumeration of the number of vertex-labeled trees on $n$ vertices.
\end{abstract}

\maketitle

\section{Introduction}
\label{sec:introduction}
\subsection{Introduction}
\label{sec:history}

It is well-known that the following sets have size $n^{n-2}$:
\begin{itemize}
 \item $\T_n$, the set of vertex-labeled trees with $n$ vertices~\cite{cayley} (\defn{Cayley's formula}); and
 \item $\F_n$, the set of factorizations of the long cycle $(1,2,\ldots,n)$ in the symmetric group $S_n$ into a product of $(n-1)$ transpositions~\cite{goulden1997transitive}.
\end{itemize}


But finding a bijection between $\T_n$ and $\F_n$ is surprisingly tricky (for a discussion, we refer the reader to the excellent paper~\cite{goulden2002tree}; see also~\cite{stanley1997parking}).

It turns out to be much easier to show that \[(n-1)!|\T_n|=(n-1)!|\F_n|.\] We quickly sketch the bijection.  The factor $(n-1)!$ on the left-hand side comes from labeling the $n-1$ edges of a vertex-labeled tree bijectively with the numbers $[n-1]\coloneqq\{1,2,\ldots,n-1\}$.  Recording the edges in order of increasing edge-label---where the edge between vertex $i$ and vertex $j$ is recorded as the transposition $(i,j)$---gives a bijection between vertex- and edge-labeled trees and factorizations of \emph{all} $(n-1)!$ long cycles in $S_n$ into $(n-1)$ transpositions.  The tricky bit is to to identify which vertex- and edge-labeled trees have image in the original set $\F_n$ (the answer relies on a certain embedding).

In this paper, we consider related problems in the \emph{affine} symmetric group $\AS_n$.

\subsection{The affine symmetric group}
The \defn{affine symmetric group} $\AS_{n}$ can be thought of as the group of bijections $\ww: \mathbb{Z} \to \mathbb{Z}$ such that~\cite[Chapter 8]{bjorner2005combinatorics}
\begin{equation}
\ww(i+n)=\ww(i)+n \text{ and } \sum_{i=1}^n \ww(i) = \binom{n+1}{2}.
\end{equation}
Recall that the reflections of $\AS_n$ swap $i$ and $j$ for $i,j \in \mathbb{Z}$ with $i \neq j \mod n$, and are written
$(\!(i,j)\!).$  

We will be interested in certain factorizations into reflections of the element $\lambda_n \in \AS_n$ that acts on $\mathbb{R}^n$ by the translation \[\lambda_n: x \mapsto x+(1,1,\ldots,1,-n+1).\]
Then $\lambda_n$ can be expressed as a product of $2n-2$ reflections (and not fewer).   This $\lambda_n$ will play the role of the long cycle.

\subsection{Trees}
A reflection factorization  for $\lambda_n$
  \[\rr =  \big[(\!(a_{0}, b_1)\!), (\!(a_{1}, b_2)\!),\ldots, (\!(a_{2n-3}, b_{2n-2})\!)\big]\]
   is called \defn{tree-like} if $a_{k-1} < b_k$ and $a_k = b_k \mod n$.
  We write $\EFt_n$ for the set of all tree-like factorizations of $\lambda_n$.
  
{
\renewcommand{\thetheorem}{\ref{thm:tree_like}}
\begin{theorem}
  There is a bijection between $\EE_n$ and $\EFt_n$, where $\EE_n$ is the set of plane-embedded vertex-labeled trees on $[n]$ with a marked edge adjacent to the vertex $n$ (up to orientation preserving homeomorphism of the plane).
\end{theorem}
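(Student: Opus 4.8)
The plan is to realize the bijection geometrically through the \emph{contour walk} of a plane tree, with the factor of two in $2n-2 = 2(n-1)$ accounted for by traversing each of the $n-1$ edges once in each direction. The first thing I would observe is that every tree-like factorization is automatically \emph{reduced}: since the reflection length of $\lambda_n$ is exactly $2n-2$ and we are using exactly $2n-2$ reflections, the factorization is length-additive. From this I would extract the structural lemma that the underlying multigraph on the residue classes $\{1,\dots,n\}=\mathbb{Z}/n$ — recording, for each reflection $(\!(a_{k-1},b_k)\!)$, the unordered pair $\{a_{k-1}\bmod n,\, b_k\bmod n\}$ — is a \emph{doubled spanning tree} $T$ on $[n]$: each edge of $T$ occurs exactly twice, as two distinct affine reflections differing by a shift. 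The point is that reducedness prevents any redundancy among the reflections, while full support of the translation $\lambda_n$ forces the residue-graph to be connected; ruling out an edge of multiplicity $\neq 2$ (e.g. one edge used three times and another once) is the delicate part of this lemma, and I would do so by showing such a configuration either drops the reflection length below $2n-2$ or fails the chaining condition.

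Next I would read off $T$ together with its plane structure from the sequence of reflections. Viewing each reflection as a \emph{dart} $u_k\to v_k$ with $u_k\equiv a_{k-1}$ and $v_k\equiv b_k \pmod n$, the chaining condition $a_k\equiv b_k\pmod n$ says precisely that $v_k=u_{k+1}$: the head of each dart is the tail of the next. Hence the whole factorization is a single closed walk on $T$ using each edge once in each direction — an Eulerian contour walk — and the marked edge at vertex $n$ records its starting dart $a_0\to b_1$ departing from $n$. The ribbon (rotation) structure is then recovered from the \emph{order in which the walk departs from each vertex}: the cyclic order of edges around a vertex $v$ is exactly the order in which successive darts leave the lifts of $v$. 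Conversely, a plane tree in $\EE_n$ with its marked edge at $n$ has a unique contour walk beginning along that edge, producing an ordered list of $2n-2$ darts; at the level of darts the two constructions are visibly inverse, so the content of the theorem is concentrated entirely in the integer lifts.

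The remaining, and hardest, ingredient is the \emph{winding}: promoting darts to honest integers with $a_{k-1}<b_k$ and verifying that the product is $\lambda_n$ rather than some other translation. I would lift the contour walk to $\mathbb{Z}$ by carrying a running height congruent mod $n$ to the current vertex, and declare each reflection to raise the height to the unique lift compatible with $a_{k-1}<b_k$ and the local ribbon data, where whether a dart is the first or second traversal of its edge governs whether the height crosses a multiple of $n$. The main obstacle is global consistency: that the heights forced from the two ends of an edge agree, that the strict inequalities can always be satisfied, and, above all, that the accumulated winding around the entire walk multiplies the reflections to exactly $\lambda_n\colon x\mapsto x+(1,\dots,1,-n+1)$. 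For the forward map I expect induction on the number of edges to work, peeling a leaf of $T$ together with its two darts and tracking how their removal alters the translation vector; for surjectivity onto $\EFt_n$ I would argue that the inequalities $a_{k-1}<b_k$ reconstruct the winding, and hence the ribbon structure, uniquely from any tree-like factorization. Proving that these inequality constraints cut out exactly the plane-tree data — with neither over- nor under-counting — is the crux of the argument.
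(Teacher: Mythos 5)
Your proposal is correct and is essentially the paper's own argument: your dart/contour-walk correspondence is exactly the paper's pair of maps $T\mapsto\rrT$ and $\rr\mapsto\Trr$, and your ``doubled spanning tree'' lemma is precisely what the paper establishes beforehand in \Cref{connection}, \Cref{cor:zero}, and \Cref{cor:pairs}, via a pigeonhole count of which reflections increase each residue class together with tracking each residue through the partial products (it is this chaining/trajectory argument, not a reflection-length count, that rules out edge multiplicities other than two, since at the level of the finite symmetric group a connected length-$(2n-2)$ factorization of the identity need not be a doubled tree). The only local divergence is your proposed leaf-peeling induction for the winding verification: the paper instead computes directly that each $i\in[n-1]$ is sent to $i+n$ (the two traversals of an edge at $i$ compose to a shift by $n$) and that the residue of $n$ loses $n$ once for each of the $n-1$ edge-pairs, hence is sent to $n-n(n-1)$, which avoids the vertex-relabeling bookkeeping your induction would require.
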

\addtocounter{theorem}{-1}
}

{
\renewcommand{\thecorollary}{\ref{cor:tree_like_enumeration}}
\begin{corollary}
For all $n\geq 2$, the number of tree-like factorizations of $\lambda_n$ is
\[\left|\EFt_n\right| = n! \Cat(n),\] where $\Cat(n) = \frac{1}{n+1}\binom{2n}{n}$ is the $n$th \defn{Catalan number}.
\end{corollary}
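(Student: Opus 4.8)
The plan is to reduce the enumeration to a count of the tree side of the bijection and then evaluate that count by a recursive, generating-function argument. First, \Cref{thm:tree_like} gives $|\EFt_n| = |\EE_n|$, so it suffices to prove $|\EE_n| = n!\,\Cat(n)$. The marked edge adjacent to vertex $n$ plays the role of a root: it fixes a reference half-edge at $n$, and traversing the tree from this root linearizes the cyclic order of the edges around every vertex. Because the vertices carry distinct labels and the embedding is rigidified by the marked edge, the objects of $\EE_n$ have no nontrivial automorphisms, so I expect the count to factor cleanly as (number of plane shapes) $\times$ (number of labelings $= n!$), which is the source of the product form $n!\,\Cat(n)$.

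Second, I would set up the standard decomposition of a rooted plane tree: cutting at the marked edge exhibits each element of $\EE_n$ as a root together with the (ordered) sequence of smaller rooted plane trees hanging off it. Passing to exponential generating functions — the labels being handled by the EGF while planarity turns ``set of subtrees'' into ``ordered sequence of subtrees'' — this decomposition yields a quadratic functional equation of Catalan type. Because the marked root-edge supplies one extra slot, the correct equation is $E = x(1+E)^2$, equivalently $C = 1 + xC^2$ for $C = 1 + E$; solving gives $[x^n]\,E = \Cat(n)$ and hence $|\EE_n| = n!\,\Cat(n)$, matching the labeled binary trees on $n$ nodes. As a cross-check one verifies the small cases $|\EE_2| = 4$ and $|\EE_3| = 30$ directly. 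I would also record a purely bijective alternative, identifying $\EE_n$ with labeled binary trees on $n$ nodes, or equivalently with pairs consisting of a Dyck path of semilength $n$ and a permutation in $S_n$, each manifestly enumerated by $n!\,\Cat(n)$.

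The main obstacle is the bookkeeping at the interface between geometry and combinatorics. One must check that ``up to orientation-preserving homeomorphism of the plane'' is exactly the equivalence encoded by recording cyclic orders at the vertices; that the marked edge genuinely rigidifies the embedding, so that the $n!$ labelings and the $\Cat(n)$ underlying plane shapes decouple without over- or under-counting; and that the edge/vertex bookkeeping is tracked correctly, since it is precisely the marked root-edge that furnishes the extra Catalan degree of freedom (and hence the shift from a naive $\Cat(n-1)$ to the desired $\Cat(n)$). Once these compatibilities are in place, either the functional equation $E = x(1+E)^2$ or the explicit bijection delivers $|\EFt_n| = |\EE_n| = n!\,\Cat(n)$, completing the proof.
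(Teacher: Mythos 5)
Your first step---invoking \Cref{thm:tree_like} to reduce the problem to counting $\EE_n$---is exactly the paper's, but the counting itself goes wrong in three concrete places. (1) The functional equation does not match your decomposition: cutting at the marked edge presents an element of $\EE_n$ as the root vertex $n$ carrying an \emph{ordered nonempty sequence} of rooted plane subtrees, and a sequence construction yields an equation of the type $E = x/(1-E)$, not $E = x(1+E)^2$; the latter is the equation for \emph{binary} trees (two optional slots per node), a different class, and $\EE_n$ is not equinumerous with labeled binary trees on $n$ nodes. (2) The decoupling ``(plane shapes) $\times$ ($n!$ labelings)'' overcounts: the marked edge is required to be adjacent to the vertex labeled $n$, so the root label is pinned and only $(n-1)!$ labelings are free. (3) Your cross-checks are false: $\EE_2$ is the single edge $\{1,2\}$ with its unique embedding and unique marked edge, so $|\EE_2| = 1$ (not $4$); and $|\EE_3| = 4$ (not $30$), since when $3$ is the center of the path there are two choices of marked edge and when $3$ is a leaf there is one.

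Carried out correctly, your own approach gives $|\EE_n| = (n-1)!\,\Cat(n-1)$: writing $P$ for the exponential generating function of labeled rooted plane trees, the sequence decomposition gives $P = x/(1-P)$, hence $P = xC(x)$ with $C$ the ordinary Catalan generating function, and the series $\sum_{n\geq 2} |\EE_n|\, x^{n-1}/(n-1)!$ (root pinned at $n$, labels only on the other vertices) equals $P/(1-P) = C(x)-1$, so $|\EE_n| = (n-1)!\,\Cat(n-1)$. This is the count the paper's own data supports: \Cref{fig:all_trees} exhibits $|\EE_4| = 30 = 3!\,\Cat(3)$, and the proof of \Cref{connection} exhibits a \emph{unique} tree-like factorization of $\lambda_2$, i.e.\ $|\EFt_2| = 1 = 1!\,\Cat(1)$, whereas $2!\,\Cat(2) = 4$. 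So the displayed formula in \Cref{cor:tree_like_enumeration} carries an index shift (as does the paper's two-line proof, which asserts that rooted planar trees on $n$ vertices number $\Cat(n)$ rather than $\Cat(n-1)$), and no correct argument can reach the literal value $n!\,\Cat(n)$: your proposal only arrives there by adopting a binary-tree equation that does not follow from your stated decomposition, and by miscounting precisely the small cases that would have exposed the discrepancy.
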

\addtocounter{theorem}{-1}
}


\subsection{Cyclic trees}

For each vertex-labeled tree, we will specify a preferred \emph{cyclic} embedding in the plane.  
Given a vertex-labeled tree $T \in \T_n$, its \defn{cyclic} embedding is given as follows: draw $T$ so that for every vertex $i \in [n]$ its neighboring vertices $j$ increase clockwise---with the exception that for $i\neq n$, $i$'s neighbor on the unique path from the vertex $n$ to $i$ is read as the central label $i$.  The marked edge is the edge from $n$ to its smallest neighbor.  An example of a cyclically-embedded tree is given in~\Cref{fig:main}.

The restriction of~\Cref{cor:tree_like_enumeration} to cyclically-embedded trees gives the notion of cyclic factorizations $\Ft_n$.

\subsection{Subwords}
\label{sec:subwords_intro}  The factorizations in our version of the problem appear as labelings of certain Deodhar components for a braid variety constructed from an affine Kac-Moody group of type $A_{n-1}$.    Write $\S_n$ for the set of \defn{maximal distinguished subwords} of the word \[\blambda_n\coloneqq[s_0,s_1,\ldots,s_{n-1}]^{n-1}.\] For simplicity in the introduction, we define $\S_n$ to be the set of subwords with $2n-2$ skips whose product is the identity (the equivalence with the usual definition is proven in~\Cref{cor:distinguished}).  An example of an element of $\S_n$ is given in~\Cref{fig:main}.

\subsection{Subwords and cyclic trees}
Our main theorem is a bijection between $\S_n$ and (cyclically-embedded) trees.  

{
\renewcommand{\thetheorem}{\ref{thm:main_thm}}
\begin{theorem}
  There is a bijection between $\S_n$ and $\T_n$.
\end{theorem}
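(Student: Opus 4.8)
The plan is to realize the bijection as a composite
\[
\S_n \longleftrightarrow \Ft_n \longleftrightarrow \T_n,
\]
passing through the cyclic reflection factorizations $\Ft_n$ of $\lambda_n$. The right-hand bijection is essentially already in hand: sending a tree $T \in \T_n$ to its cyclic embedding realizes $\T_n$ inside the plane-embedded trees $\EE_n$, and the cyclic refinement of \Cref{thm:tree_like} identifies the cyclically-embedded trees with exactly the cyclic factorizations $\Ft_n$. Thus $\Ft_n$ is in bijection with $\T_n$, and everything reduces to producing a bijection $\S_n \longleftrightarrow \Ft_n$; this is where the genuine work lies.

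For the forward map $\S_n \to \Ft_n$ I would invoke the standard dictionary between distinguished subwords and reflection factorizations. Write $\blambda_n = (q_1, q_2, \ldots, q_m)$ with $m = n(n-1)$, so that $q_1 q_2 \cdots q_m = \lambda_n$, and view an element of $\S_n$ as a marking of the $m$ positions as \emph{used} (kept in the subword) or \emph{skipped} (omitted), with exactly $2n-2$ skips. Let $P_{k-1}$ denote the product of the used letters strictly before position $k$, and for each skipped position $k$ record the reflection $t_k = P_{k-1}\, q_k\, P_{k-1}^{-1}$. A short combing induction---at each step pulling the rightmost skipped letter leftward past the used letters that precede it---yields
\[
q_1 q_2 \cdots q_m \;=\; t_{k_1} t_{k_2} \cdots t_{k_{2n-2}} \cdot P_m,
\]
where $k_1 < \cdots < k_{2n-2}$ are the skip positions and $P_m$ is the product of all used letters. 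The defining condition of $\S_n$ is exactly that $P_m$ is the identity, so this collapses to $\lambda_n = t_{k_1} \cdots t_{k_{2n-2}}$, a factorization of $\lambda_n$ into $2n-2$ reflections.

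The heart of the proof is to show that this map takes values in, and restricts to a bijection onto, the cyclic factorizations $\Ft_n$. Here I would use the rigid periodicity of $\blambda_n = (s_0 s_1 \cdots s_{n-1})^{n-1}$: each partial product $P_{k-1}$ is a truncated prefix of a power of the Coxeter element $s_0 \cdots s_{n-1}$, so the conjugate $t_k = P_{k-1} q_k P_{k-1}^{-1}$ can be written down explicitly as a reflection $(\!(a_{k-1}, b_k)\!)$. I expect the distinguishedness of the subword---which by \Cref{cor:distinguished} is equivalent to the working definition of $\S_n$---to translate precisely into the ordering $a_{k-1} < b_k$ and the congruence chaining $a_k \equiv b_k \pmod n$ of a tree-like factorization, together with the extra constraints that cut $\EFt_n$ down to the cyclic factorizations $\Ft_n$. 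Injectivity would follow by reconstructing the skip data from a factorization: each reflection $(\!(a_{k-1}, b_k)\!)$ determines a residue mod $n$ and a position inside the appropriate copy of $s_0 \cdots s_{n-1}$, and these recover the marking uniquely; surjectivity comes from running this reconstruction backwards and checking that it returns a legitimate distinguished subword.

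The main obstacle is exactly this last identification. Converting the combinatorics of distinguished subwords of the one specific periodic word $\blambda_n$ into the defining conditions of $\Ft_n$ requires careful bookkeeping of residues modulo $n$ across the $n-1$ periods, and this must be reconciled both with the maximality/distinguishedness condition and with the cyclic-embedding constraints; it is here that the explicit description of $\lambda_n$ as a translation, rather than any abstract feature, is indispensable. Once the bijection $\S_n \longleftrightarrow \Ft_n$ is established, composing it with the cyclic refinement of \Cref{thm:tree_like} gives the desired bijection $\S_n \longleftrightarrow \T_n$.
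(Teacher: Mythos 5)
Your architecture coincides with the paper's: factor the bijection as $\S_n \leftrightarrow \Ft_n \leftrightarrow \T_n$, obtain the right-hand arrow from the cyclic restriction of \Cref{thm:tree_like} (this is \Cref{thm:cyclic}), and define the left-hand arrow by conjugating each skipped letter past the used prefix. Your $t_k = P_{k-1} q_k P_{k-1}^{-1}$ is exactly the paper's skip-reflection sequence $\rru$ (\Cref{def:inv}), and your combing identity is \Cref{invProd}. So the skeleton is right; the problem is that everything you defer to ``I expect'' is where the theorem actually lives, and your sketch of how to fill it in has two concrete defects.

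First, the route through distinguishedness is backwards. In the paper, the fact that elements of $\S_n$ are distinguished in Deodhar's sense (\Cref{cor:distinguished}) is a \emph{consequence} of the tree-like property of $\rru$, which is itself proved from the grid structure of $\blambda_n$: one needs the row-counting pigeonhole bound of \Cref{maxDist} to establish \Cref{smallMoves}, i.e.\ that every inversion $(\!(a,b)\!)$ of an $e$-subword satisfies $|b-a|\le n-1$, and only then does \Cref{connection} convert this into the tree-like conditions $a_{k-1}<b_k$ and $a_k = b_k \mod n$. Taking distinguishedness as the input, as you propose, either assumes what must be proved or requires an independent proof of the equivalence you cite. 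Second, your injectivity/surjectivity sketch claims each reflection $(\!(a_{k-1},b_k)\!)$ ``determines a residue mod $n$ and a position'' in $\blambda_n$; this is false as stated, because the skip position $i_j$ is not a function of the reflection alone but of its conjugate by the entire prefix: writing $(r_1\cdots r_{j-1})r_j(r_{j-1}\cdots r_1)=(\!(0,m_j)\!)$, the paper's \Cref{locSkip} gives $m_j = i_j + \lfloor (i_j-1)/(n-1)\rfloor$, so that $i_j = m_j - \lfloor (m_j-1)/n\rfloor$. Moreover, identifying the image as exactly $\Ft_n$---and not some larger subset of $\EFt_n$, which has $n!\,\Cat(n)$ elements---requires the characterization of \Cref{incClock}: a tree-like factorization is cyclic if and only if $m_1<\cdots<m_{2n-2}$. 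Without \Cref{maxDist}, \Cref{smallMoves}, \Cref{locSkip}, and \Cref{incClock}, none of which your proposal supplies or substitutes for, what you have is a correct plan rather than a proof.
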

\addtocounter{theorem}{-1}
}

\subsection{Enumeration}
In previous work, $\S_n$ was counted by Galashin, Lam, and Williams using braid varieties, a trace formula in the affine Hecke algebra due to Opdam, and an identity due to Haglund~\cite{williamsopac}.

{
\renewcommand{\thetheorem}{\ref{thm:subword_count}}
\begin{theorem}[P.~Galashin, T.~Lam, N.~Williams]
\[
\left|R_{\blambda_n}(\FF_q) \right| =  (q - 1)^{2n-2}[n]_q^{n-2} \hspace{1em}\text{ and } \hspace{1em} |\S_n|=n^{n-2}.
\]
\end{theorem}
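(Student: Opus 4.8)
The plan is to establish the Weil point count $|R_{\blambda_n}(\FF_q)| = (q-1)^{2n-2}[n]_q^{n-2}$ first, and then to read off $|\S_n| = n^{n-2}$ from it; essentially all of the difficulty is concentrated in the point count, after which the combinatorial identity is a short specialization. Throughout I would work with the Deodhar (``brick'') decomposition of the braid variety $R_{\blambda_n}$ attached to the word $\blambda_n = [s_0,\ldots,s_{n-1}]^{n-1} = c^{\,n-1}$, where $c = s_0 s_1 \cdots s_{n-1}$ is an affine Coxeter element of $\AS_n$. This decomposition stratifies $R_{\blambda_n}$ into locally closed pieces $D_\sigma \cong (\FF_q^\times)^{s(\sigma)} \times \FF_q^{\,f(\sigma)}$, one for each distinguished subexpression $\sigma$ of $\blambda_n$ (with the endpoint condition that places it in $R_{\blambda_n}$), where $s(\sigma)$ is the number of skips and $f(\sigma)$ the number of free directions, so that $|R_{\blambda_n}(\FF_q)| = \sum_\sigma (q-1)^{s(\sigma)} q^{\,f(\sigma)}$.

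For the point count itself the strategy, following Galashin--Lam--Williams, is to pass from geometry to the affine Hecke algebra $\mathcal{H}$. The $\FF_q$-point count of the braid variety is governed by the image of the braid in $\mathcal{H}$: one forms $T_c = T_{s_0}\cdots T_{s_{n-1}}$, considers $T_c^{\,n-1}$, and extracts the count from a suitable trace. Because $\blambda_n$ is a power of a Coxeter element, this trace is accessible to Opdam's trace formula in the affine Hecke algebra, which rewrites it as an explicit spectral sum; specializing that sum and simplifying it with an identity of Haglund collapses the answer to the product $(q-1)^{2n-2}[n]_q^{n-2}$. I expect this to be the main obstacle: verifying that Opdam's formula applies to this particular affine element, carrying out the specialization, and recognizing the output as exactly the instance of Haglund's identity that yields $[n]_q^{n-2}$ is the technical heart of the argument, and is precisely the content credited to Galashin, Lam, and Williams.

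Granting the point count, the second equality is a short specialization. By \Cref{cor:distinguished}, the elements of $\S_n$ are the maximal distinguished subwords, equivalently those with exactly $2n-2$ skips and trivial product; since the skips of a subexpression with product $e$ record a reflection factorization of $\lambda_n$, their number is at least the reflection length $2n-2$ of $\lambda_n$, and the elements of $\S_n$ are exactly the strata attaining this minimum. Each such stratum contributes $(q-1)^{2n-2} q^{\,f(\sigma)}$ for some $f(\sigma) \geq 0$, while every other stratum is divisible by a strictly higher power of $(q-1)$. Writing the Deodhar sum as $(q-1)^{2n-2}\big(\sum_{\sigma \in \S_n} q^{\,f(\sigma)}\big)$ plus terms divisible by $(q-1)^{2n-1}$, dividing by $(q-1)^{2n-2}$, and letting $q \to 1$ therefore kills every higher-order stratum and sends each surviving $q^{\,f(\sigma)} \to 1$, leaving $\lim_{q \to 1}(q-1)^{-(2n-2)}|R_{\blambda_n}(\FF_q)| = |\S_n|$ on the geometric side. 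Applying the same operation to the closed form gives $[n]_q^{n-2}\big|_{q=1} = n^{\,n-2}$, since $[n]_1 = n$. Comparing the two evaluations yields $|\S_n| = n^{\,n-2}$, as claimed.
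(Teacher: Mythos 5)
Your proposal is correct and follows essentially the same route as the paper: the point count $\left|R_{\blambda_n}(\FF_q)\right| = (q-1)^{2n-2}[n]_q^{n-2}$ via Opdam's trace formula specialized through Haglund's identity (at $t=q^{-1}$), followed by the Deodhar stratification, division by $(q-1)^{2n-2}$, and the limit $q\to 1$, in which only the strata indexed by $\S_n$ (exactly $2n-2$ skips, the reflection length of $\lambda_n$) survive. Your added justification that $2n-2$ is a lower bound on the number of skips---because skip reflections of an $e$-subword factor $\lambda_n$---is a point the paper asserts without comment, but it is the same argument in all essentials.
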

\addtocounter{theorem}{-1}
}

\Cref{thm:main_thm,thm:subword_count} together give an elaborate new proof for the enumeration of $\T_n$.

{
\renewcommand{\thecorollary}{\ref{cor:cayley}}
\begin{corollary}[Cayley's formula]
$|\E_n|=n^{n-2}.$
\end{corollary}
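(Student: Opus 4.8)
The plan is to obtain Cayley's formula as an immediate composition of the paper's two principal results, with no new work required. Since $\E_n$ is by definition the set $\T_n$ of vertex-labeled trees on $n$ vertices, it suffices to compute $|\T_n|$. First I would invoke \Cref{thm:main_thm}, which provides an explicit bijection between the set $\S_n$ of maximal distinguished subwords and $\T_n$; because a bijection preserves cardinality, this immediately gives $|\T_n| = |\S_n|$. Then I would apply the second assertion of \Cref{thm:subword_count}, namely $|\S_n| = n^{n-2}$, computed via the $\FF_q$-point count of the braid variety together with Opdam's trace formula and Haglund's identity. Chaining the two equalities yields
\[
|\E_n| = |\T_n| = |\S_n| = n^{n-2},
\]
which is the desired statement.

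There is no genuine obstacle at the level of the corollary itself: all of the difficulty has been pushed into its two ingredients. The combinatorial content resides entirely in \Cref{thm:main_thm}, where one must construct the bijection between subwords and cyclically-embedded trees and verify that it is well-defined and invertible; the enumerative content resides in \Cref{thm:subword_count}, whose proof is the representation-theoretic heart of the argument. The only point requiring attention in assembling the corollary is to confirm that both theorems genuinely refer to the same object $\S_n$ attached to the word $\blambda_n$, so that the bijection of \Cref{thm:main_thm} and the count of \Cref{thm:subword_count} may be legitimately composed. Granting that, the corollary is a one-line deduction.
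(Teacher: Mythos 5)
Your proposal is correct and matches the paper's proof exactly: the paper also deduces the corollary immediately by composing the bijection of \Cref{thm:main_thm} with the enumeration $|\S_n|=n^{n-2}$ of \Cref{thm:subword_count}. Your additional remark about checking that both theorems refer to the same $\S_n$ (maximal distinguished subwords of $\blambda_n$) is a reasonable sanity check, and indeed holds by \Cref{def:subwords}.
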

\addtocounter{theorem}{-1}
}

\begin{remark}
Since the maximal distinguished subwords $\S_n$ are naturally in bijection with trees, it makes sense to consider the braid variety $R_{\blambda_n}(\FF_q)$ to be a $q$-analogue of vertex-labeled trees.  Compare with~\cite{leinster2021probability}, which gives a very different $q$-analogue as the number of nilpotent linear operators on $\FF_q^n$.
\end{remark}

The remainder of this paper has the same structure as the introduction, with a final section on future work.


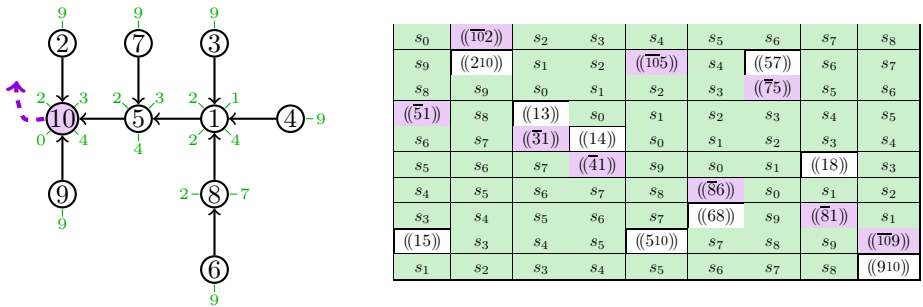
\begin{figure}[htbp]
\[
\raisebox{-.5\height}{
\begin{tikzpicture}
\node[draw,circle,inner sep=0pt,thick,minimum width=1em] (2) at (-1,1) {$2$};
\draw[-,darkblue] (-1,1.2) to (-1,1.3);
\node[darkblue] (l2) at (-1,1.4) {\tiny $9$};
\node[draw,circle,inner sep=0pt,thick,minimum width=1em] (9) at (-1,-1) {$9$};
\draw[-,darkblue] (-1,-1.2) to (-1,-1.3);
\node[darkblue] (l9) at (-1,-1.4) {\tiny $9$};
\node[draw,circle,inner sep=0pt,thick,minimum width=1em,black,fill=darkred!20,text opacity=1] (10) at (-1,0) {$10$};
\draw[-,darkblue] (-1+.16,.16) to (-1+.21,.21);
\node[darkblue] (l0) at (-1+.28,.28) {\tiny $3$};
\draw[-,darkblue] (-1+.16,-.16) to (-1+.21,-.21);
\node[darkblue] (l0) at (-1+.28,-.28) {\tiny $4$};
\draw[-,darkblue] (-1-.16,.16) to (-1-.21,.21);
\node[darkblue] (l0) at (-1-.28,.28) {\tiny $2$};
\draw[-,darkblue] (-1-.16,-.16) to (-1-.21,-.21);
\node[darkblue] (l0) at (-1-.28,-.28) {\tiny $0$};
\node[draw,circle,inner sep=0pt,thick,minimum width=1em] (5) at (0,0) {$5$};
\draw[-,darkblue] (0,-.2) to (0,-.3);
\node[darkblue] (l5) at (0,-.4) {\tiny $4$};
\draw[-,darkblue] (.14,.14) to (.21,.21);
\node[darkblue] (l5) at (.28,.28) {\tiny $3$};
\draw[-,darkblue] (-.14,.14) to (-.21,.21);
\node[darkblue] (l5) at (-.28,.28) {\tiny $2$};
\node[draw,circle,inner sep=0pt,thick,minimum width=1em] (7) at (0,1) {$7$};
\draw[-,darkblue] (0,1.2) to (0,1.3);
\node[darkblue] (l7) at (0,1.4) {\tiny $9$};
\node[draw,circle,inner sep=0pt,thick,minimum width=1em] (1) at (1,0) {$1$};
\draw[-,darkblue] (1.14,.14) to (1.21,.21);
\node[darkblue] (l5) at (1.28,.28) {\tiny $1$};
\draw[-,darkblue] (1.14,-.14) to (1.21,-.21);
\node[darkblue] (l5) at (1.28,-.28) {\tiny $4$};
\draw[-,darkblue] (.86,.14) to (.79,.21);
\node[darkblue] (l5) at (.72,.28) {\tiny $2$};
\draw[-,darkblue] (.86,-.14) to (.79,-.21);
\node[darkblue] (l5) at (.72,-.28) {\tiny $2$};
\node[draw,circle,inner sep=0pt,thick,minimum width=1em] (3) at (1,1) {$3$};
\draw[-,darkblue] (1,1.2) to (1,1.3);
\node[darkblue] (l3) at (1,1.4) {\tiny $9$};
\node[draw,circle,inner sep=0pt,thick,minimum width=1em] (8) at (1,-1) {$8$};
\draw[-,darkblue] (1.2,-1) to (1.3,-1);
\node[darkblue] (l8) at (1.4,-1) {\tiny $7$};
\draw[-,darkblue] (.8,-1) to (.7,-1);
\node[darkblue] (l8) at (.6,-1) {\tiny $2$};
\node[draw,circle,inner sep=0pt,thick,minimum width=1em] (6) at (1,-2) {$6$};
\draw[-,darkblue] (1,-2.2) to (1,-2.3);
\node[darkblue] (l6) at (1,-2.4) {\tiny $9$};
\node[draw,circle,inner sep=0pt,thick,minimum width=1em] (4) at (2,0) {$4$};
\draw[-,darkblue] (2.2,0) to (2.3,0);
\node[darkblue] (l4) at (2.4,0) {\tiny $9$};
\draw[->,thick] (2) -- (10);
\draw[->,thick] (9) -- (10);
\draw[->,thick] (5) -- (10);
\draw[->,thick] (7) -- (5);
\draw[->,thick] (1) -- (5);
\draw[->,thick] (3) -- (1);
\draw[->,thick] (4) -- (1);
\draw[->,thick] (8) -- (1);
\draw[->,thick] (6) -- (8);
\draw[->,dashed,ultra thick,darkred] plot [smooth] coordinates {(-1.24,0) (-1.5,0) (-1.6,.3) (-1.6,.5)};
\end{tikzpicture}} \hspace{2em} \scalebox{0.65}{
$\begin{array}{|c|c|c|c|c|c|c|c|c|} \hline
\cc s_0 & \cd (\!(\overline{\ten}2)\!) & \cc s_2 & \cc s_3 & \cc s_4 & \cc s_5 & \cc s_6 & \cc s_7 & \cc s_8 \\\hline
 \cc s_9 & (\!(2\ten)\!) & \cc s_1 & \cc s_2 & \cd  (\!(\overline{\ten}5)\!) & \cc s_4 & (\!(57)\!) & \cc s_6 & \cc s_7 \\\hline
 \cc s_8 & \cc s_9 & \cc s_0 & \cc s_1 & \cc s_2 & \cc s_3 & \cd  (\!(\overline{7}5)\!) & \cc s_5 & \cc s_6 \\\hline
 \cd (\!(\overline{5}1)\!) & \cc s_8 & (\!(13)\!) & \cc s_0 & \cc s_1 & \cc s_2 & \cc s_3 & \cc s_4 & \cc s_5 \\\hline
 \cc s_6 & \cc s_7 & \cd  (\!(\overline{3}1)\!) & (\!(14)\!) &
\cc s_0 & \cc s_1 & \cc s_2 & \cc s_3 & \cc s_4 \\\hline
 \cc s_5 & \cc s_6 & \cc s_7 & \cd (\!(\overline{4}1)\!) & \cc s_9 & \cc s_0 & \cc s_1 & (\!(18)\!) & \cc s_3 \\ \hline
 \cc s_4 & \cc s_5 & \cc s_6 & \cc s_7 & \cc s_8 &\cd  (\!(\overline{8}6)\!) & \cc s_0 & \cc s_1 & \cc s_2 \\\hline
 \cc s_3 & \cc s_4 & \cc s_5 & \cc s_6 & \cc s_7 & (\!(68)\!) & \cc s_9 &
\cd (\!(\overline{8}1)\!) & \cc s_1 \\\hline
 (\!(15)\!) & \cc s_3 & \cc s_4 & \cc s_5 & (\!(5\ten)\!) & \cc s_7 & \cc s_8& \cc s_9 &
\cd (\!(\overline{\ten}9)\!) \\\hline
 \cc s_1 & \cc s_2 & \cc s_3 & \cc s_4 & \cc s_5 & \cc s_6 & \cc s_7 & \cc s_8 & (\!(9\ten)\!) \\ \hline
\end{array}$}\]
\caption{Our running example.  {\it Left: } a cyclically-embedded vertex-labeled tree in $\T_{9}$ (for now, ignore the arrows, green edges, and green labels). \\ {\it Right:} the corresponding maximal distinguished subword $\u \in \S_{10}$, with takes in green, and skips in white and purple (decorated by the corresponding skip reflection, with the convention that $\overline{i}\coloneqq i-n$).}
\label{fig:main}
\end{figure}

\section{The affine symmetric group}
\label{sec:affine_symmetric}

The \defn{affine symmetric group} $\AS_{n}$ can be thought of as the group of bijections $\ww: \mathbb{Z} \to \mathbb{Z}$ such that~\cite[Chapter 8]{bjorner2005combinatorics}
\begin{equation}
\ww(i+n)=\ww(i)+n \text{ and } \sum_{i=1}^n \ww(i) = \binom{n+1}{2}.
\end{equation}

We write $(\!(i,j)\!)$ for the \defn{affine reflection} that interchanges $i$ and $j$ (simultaneously interchanging $i+kn$ and $j+kn$ for every $k \in \mathbb{Z}$); thus, $(\!(i,j)\!)=(\!(i+kn,j+kn)\!)$. We denote by $s_j \coloneqq (\!(j, j+1 )\!)$ the \defn{simple reflections}. For clarity of typesetting, we use the notation $\overline{i}\coloneqq i-n$.  We write $i \mod n$ for the representative between $1$ and $n$ equal to $i$ modulo $n$.

The \defn{reflection length} of $w \in \AS_n$ is the minimal number of reflections required to express $w$ as a product of reflections.

\begin{proposition}
\label{prop:descents}
For $0 \leq i < n$, $s_i=(\!(i,i+1)\!)$ is a right descent of $w\in \AS_n$ iff $w(i)>w(i+1)$.
\end{proposition}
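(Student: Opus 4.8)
The plan is to deduce the statement from the standard combinatorial formula for the (Coxeter) length of an affine permutation, reading \emph{right descent} in the usual sense that $s_i$ is a right descent of $w$ when $\ell(w s_i) < \ell(w)$. Recall from \cite[Chapter 8]{bjorner2005combinatorics} that the length of $w \in \AS_n$ counts its inversions,
\[
\ell(w) = \#\big\{ \{a,b\} : a < b,\ a \not\equiv b \mod n,\ w(a) > w(b) \big\},
\]
where pairs are taken up to the shift $\{a,b\} \sim \{a+n, b+n\}$; this is finite because $w(a+n) = w(a)+n$. So it suffices to compare the inversion sets $\operatorname{Inv}(w)$ and $\operatorname{Inv}(v)$ for $v \coloneqq w s_i$ and to show their sizes differ by exactly one, with the sign governed by the comparison of $w(i)$ and $w(i+1)$.

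First I would record the effect of right multiplication by $s_i$: since $v(a) = w(s_i(a))$ and $s_i$ fixes every integer not congruent to $i$ or $i+1$ while swapping $i+mn \leftrightarrow i+1+mn$, the permutation $v$ agrees with $w$ except that it exchanges the values $v(i+mn) = w(i+1)+mn$ and $v(i+1+mn) = w(i)+mn$. In particular, since $w$ is a bijection and $i \not\equiv i+1 \mod n$, we have $w(i) \neq w(i+1)$, so exactly one of $w(i) > w(i+1)$ and $w(i) < w(i+1)$ holds.

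The heart of the argument is the order-sorted map $\phi$ on unordered pairs given by $\{a,b\} \mapsto \{s_i(a), s_i(b)\}$. Because $s_i(x+n) = s_i(x)+n$, the map $\phi$ is an involution commuting with the shift, so it descends to equivalence classes of pairs. I would then check, by cases on the residues of $a,b$ modulo $n$, that for every class \emph{other} than that of $\{i,i+1\}$ one has $\{a,b\} \in \operatorname{Inv}(v)$ iff $\phi(\{a,b\}) \in \operatorname{Inv}(w)$; each computation reduces to the identity $v(a) > v(b) \iff w(s_i a) > w(s_i b)$ together with whether $s_i$ preserves or reverses the order of $a$ and $b$. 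The only class fixed by $\phi$ and involving a moved position is that of $\{i,i+1\}$, and there $\{i,i+1\} \in \operatorname{Inv}(v) \iff w(i+1) > w(i) \iff \{i,i+1\} \notin \operatorname{Inv}(w)$, so its inversion status flips.

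It follows that $\phi$ restricts to a bijection between $\operatorname{Inv}(v)$ and $\operatorname{Inv}(w)$ on all classes except that of $\{i,i+1\}$, which lies in exactly one of the two sets; hence $\ell(v) = \ell(w) \mp 1$, with $\ell(w s_i) = \ell(w) - 1$ precisely when $\{i,i+1\} \in \operatorname{Inv}(w)$, i.e. when $w(i) > w(i+1)$. This is the desired equivalence. The step I expect to be the main (if routine) obstacle is the affine bookkeeping: confirming that $\phi$ is well defined on shift-classes and that the mixed pairs with $a \equiv i$, $b \equiv i+1$ but $b \neq a+1$ (such as $\{i,\, i+1+n\}$) are matched correctly, so that genuinely only a single class changes its inversion status.
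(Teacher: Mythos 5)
Your proof is correct, but there is nothing in the paper to compare it against: the paper states this proposition with no proof at all, treating it as a standard fact about $\AS_n$ covered by the ambient citation to Bj\"orner--Brenti~\cite[Chapter 8]{bjorner2005combinatorics} at the start of Section~2. What you have written is essentially the classical textbook argument, and it holds up. Your two delicate points both check out: the map $\{a,b\}\mapsto\{s_i(a),s_i(b)\}$ commutes with the simultaneous shift by $n$ (since $s_i(x+n)=s_i(x)+n$), so it is well defined on shift-classes of pairs; and since $s_i$ moves every integer by at most $1$, it can reverse the order of a pair $a<b$ only when $s_i(a)=a+1$, $s_i(b)=b-1$ and $b=a+1$ with $a\equiv i \bmod n$, i.e.\ only on the class of $\{i,i+1\}$ --- so exactly one class changes inversion status, and it lies in the inversion set of $w$ precisely when $w(i)>w(i+1)$. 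The mixed classes you flagged, such as $\{i,\,i+1+n\}$, are indeed merely permuted (that class is swapped with $\{i+1,\,i+n\}$, with order preserved), so they cancel in the count. The one ingredient you import without proof is the inversion formula for $\ell$ on $\AS_n$, which is precisely what the paper's reference supplies; granting that, your argument is complete, and it makes this step of the paper self-contained where the original simply defers to the literature.
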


\begin{proposition}
\label{eq:cycle}
  Let $\lambda_n$ be the translation that acts on $\mathbb{R}^n$ by \[\lambda_n: x \mapsto x+(1,1,\ldots,1,-n+1).\]Then $\lambda_n$ is an element of $\AS_n$ with:
  \begin{itemize}
    \item reduced word in simple reflections
      $(s_0s_1 \cdots s_{n-1})^{n-1}$;
    \item one-line notation
      \[\big[n+1,n+2,\ldots,2n-1,-n(n-2)\big]; \text{ and}\]
    \item cycle notation
      \begin{equation*}
        \left(\prod_{i=1}^{n-1}\left(\ldots,i-n,i,i+n,\ldots\right)\right)\left(\prod_{m=0}^{n-1} \left(\ldots,nm+n(n-1),nm,nm-n(n-1),\ldots\right)\right).
      \end{equation*}
  \end{itemize}
  Furthermore, $\lambda_n$ has reflection length $2n-2$.
\end{proposition}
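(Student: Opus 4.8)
The plan is to route everything through the one-line notation of $\lambda_n$, proving the one-line, cycle, and membership claims at once, then verifying the reduced word by a length count, and finally treating the reflection length, whose lower bound is the only nonroutine step.

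First I would record the standard dictionary between translations and window notation: under $\AS_n \cong S_n \ltimes Q$ with root lattice $Q = \{\mu \in \mathbb{Z}^n : \sum_i \mu_i = 0\}$, the translation $x \mapsto x + \mu$ is the affine permutation $w$ determined on $[n]$ by $w(i) = i + n\mu_i$ (and extended by $w(i+n) = w(i) + n$). Substituting $\mu = (1,\ldots,1,-n+1)$ gives $w(i) = i+n$ for $1 \le i \le n-1$ and $w(n) = n - n(n-1) = -n(n-2)$, which is exactly the claimed one-line notation; since $\sum_i \mu_i = 0$ forces $\sum_{i=1}^n w(i) = \sum_{i=1}^n i = \binom{n+1}{2}$, this also verifies $\lambda_n \in \AS_n$. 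The cycle notation is then immediate from periodicity: each residue $i \in \{1,\ldots,n-1\}$ satisfies $\lambda_n(j) = j+n$ for all $j \equiv i \pmod n$, producing the cycle $(\ldots, i-n, i, i+n, \ldots)$, while $\lambda_n(nm) = nm - n(n-1)$ produces the cycles $(\ldots, nm+n(n-1), nm, nm - n(n-1), \ldots)$ among the multiples of $n$.

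Next, for the reduced word, I would compute $c := s_0 s_1 \cdots s_{n-1}$ directly as a window permutation by tracking each $i \in [n]$ through the simple reflections, obtaining $c(i) = i+1$ for $1 \le i \le n-2$, $c(n-1) = n+1$, and $c(n) = 0$. Iterating, $c$ walks each non-multiple of $n$ upward through the non-multiples, so from $i \in \{1,\ldots,n-1\}$ the orbit passes exactly one multiple of $n$ in $n-1$ steps, giving $c^{n-1}(i) = i+n$; and since $c(nm) = n(m-1)$ on the multiples of $n$, we get $c^{n-1}(n) = n(2-n) = -n(n-2)$. Hence $c^{n-1}$ has the same one-line notation as $\lambda_n$, so $c^{n-1} = \lambda_n$ (this also re-confirms the dictionary above). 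To see the word is reduced it suffices that its letter count $n(n-1)$ equals $\ell(\lambda_n)$: by the translation length formula $\ell(t_\mu) = \sum_{1 \le i < j \le n} |\mu_i - \mu_j|$ (the sum of $|\langle \mu, e_i - e_j\rangle|$ over positive roots), only the $n-1$ pairs $(i,n)$ contribute, each by $n$, so $\ell(\lambda_n) = n(n-1)$.

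Finally, I would establish the reflection length. The upper bound is easy: writing $\mu = \sum_{i=1}^{n-1}(e_i - e_n)$ and using that translations commute and that a translation by a root $\alpha$ equals the product of the two reflections $s_{\alpha,1} s_\alpha$, we express $\lambda_n$ as a product of $2(n-1) = 2n-2$ reflections. The hard part is the matching lower bound, because the usual codimension-of-fixed-space (Carter) bound in the $n$-dimensional reflection representation only yields $\ell_R(\lambda_n) \le n$ and is not tight for affine translations. To get $2n-2$ I would argue as follows. Given any factorization $\lambda_n = r_1 \cdots r_m$ with $r_t = s_{\alpha_t, k_t}$ and $\alpha_t = e_{a_t} - e_{b_t}$, moving all translation parts to the left shows that the finite parts $(a_t\, b_t)$ multiply to the identity in $S_n$ and that $\mu$ is a sum of roots each supported within a single connected component of the support graph $H$ on $[n]$ with edges $\{a_t, b_t\}$. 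Since every coordinate of $\mu$ is nonzero, no vertex of $H$ is isolated, and since $\sum_{i \in A}\mu_i \ne 0$ for any component $A$ not containing $n$, the graph $H$ must be connected on all $n$ vertices. Then the genus (Riemann--Hurwitz) inequality for transitive transposition factorizations, $m \ge k + c(\pi) - 2$ for a transitive factorization of $\pi \in S_k$, applied with $\pi = e$, $k = n$, $c(e) = n$, gives $m \ge 2n-2$. Combined with the upper bound, $\ell_R(\lambda_n) = 2n-2$.
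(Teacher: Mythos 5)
Your proposal is correct, and while the routine parts (one-line notation, cycle notation, reduced word) match what the paper dismisses as ``simple computations,'' your treatment of the reflection length takes a genuinely different route. The paper simply cites known results on reflection length in affine Coxeter groups (McCammond--Petersen, Proposition 4.3, and Lewis--McCammond--Petersen--Reiner, Theorem 4.25) to get $2n-2$. You instead give a self-contained argument: the upper bound by writing $\lambda_n = \prod_{i=1}^{n-1} t_{e_i - e_n}$ with each root translation a product of two parallel affine reflections, and the lower bound by splitting each reflection as $t_{k\alpha}s_\alpha$ in the semidirect product, observing that the linear parts give a transposition factorization of the identity in $S_n$ whose support graph must be connected (since $\sum_{i \in A}\mu_i \neq 0$ for any proper vertex subset $A$ not containing $n$, as all coordinates of $\mu$ are nonzero), and then invoking the Hurwitz genus bound $m \geq n + c(\pi) - 2$ for transitive transposition factorizations, with $\pi = e$. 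Both steps check out; the reduction to the transitive-factorization bound is the key new idea, and it is sound. What the two approaches buy: the paper's citation is shorter and rests on theory valid for arbitrary affine Coxeter groups, whereas yours is elementary modulo a single classical fact --- and that fact is precisely the Goulden--Jackson/Hurwitz circle of ideas the paper itself invokes in its introduction for $|\F_n|$, so your argument is arguably more in the spirit of the paper; it does, however, trade one external citation for another, since the genus inequality also needs either a reference or a short Euler-characteristic proof. One small phrasing slip that does not affect correctness: the Carter-style bound is a \emph{lower} bound on reflection length, so what you mean is that the best lower bound it can produce is capped by the dimension $n$, not that it yields ``$\ell_R(\lambda_n) \le n$.''
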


\begin{proof}
The three descriptions are simple computations.  The reflection length is easily deduced from~\cite[Proposition 4.3]{mccammond2011bounding} (see also~\cite[Theorem 4.25]{lewis2019computing}).
\end{proof}

From its cycle decomposition, we see that $\lambda_n$ acts on the integers as follows: it sends $k=0\mod n$ to $k-n(n-1)$, and it sends $k \neq 0 \mod n$ to $k + n$.

\section{Trees}
\label{sec:factorizations}

In~\Cref{sec:treelike_factorizations}, we describe sets of certain \emph{tree-like} factorizations of $\lambda_n$ in the affine symmetric group, which we will show in~\Cref{sec:trees} are encoded by clockwise walks around embedded vertex-labeled trees.

Given a finite sequence of reflections 
$\rr=\left[(\!(a_{0}, b_1)\!), (\!(a_{1}, b_2)\!),\ldots\right]$ and $k \in [n]$, write $\rrr_\ell=(\!(a_{\ell-1},b_\ell)\!)$ for the $\ell$th reflection in the sequence.  We say that $\rr$ is a \defn{factorization} of $w \in \AS_n$ if $w = \prod_{i} (\!(a_i,b_{i+1})\!)$; it is of \defn{minimal length} if the number of reflections is equal to the reflection length of $w$.  For $w \in \AS_n$, write $\Fact(w)$ for the set of its minimal length reflection factorizations.

\subsection{Tree-like factorizations}
\label{sec:treelike_factorizations}

\begin{definition}
\label{def:treelike_factorizations}
  A minimal length reflection factorization
  \[\rr =  \big[(\!(a_{0}, b_1)\!), (\!(a_{1}, b_2)\!),\ldots, (\!(a_{2n-3}, b_{2n-2})\!)\big] \in \Fact(\lambda_n)\]
  is \defn{tree-like} if $a_{k-1} < b_k$ and $a_k = b_k \mod n$.
  We write $\EFt_n$ for the set of all tree-like factorizations of $\lambda_n$.
\end{definition}

\begin{example}\label{ex1}
For $n=10$, the following factorization (see~\Cref{fig:main}) is tree-like:
\begin{align*}\rr=\Big[ &(\!(\overline{\ten} 2)\!),(\!(2\ten)\!),(\!(\overline{\ten} 5)\!),(\!(57)\!),(\!(\overline{7} 5)\!),(\!(\overline{5} 1)\!),(\!(13)\!),(\!(\overline{3} 1)\!),(\!(14)\!),\\ &(\!(\overline{4} 1)\!),(\!(18)\!),(\!(\overline{8} 6)\!),(\!(68)\!),(\!(\overline{8} 1)\!),(\!(15)\!),(\!(5\ten)\!),(\!(\overline{\ten} 9)\!),(\!(9\ten)\!)\Big].\end{align*}
\end{example}

We say that a reflection $\rrr_\ell$ \defn{increases} an integer $k$ if
    \[
        \rrr_\ell \rrr_{\ell + 1} \cdots \rrr_{2n-2}(k) > \rrr_{\ell + 1} \cdots \rrr_{2n-2}(k),
    \]
    and we say $\rrr_\ell$ \defn{decreases} $k$ if
    \[
        \rrr_\ell \rrr_{\ell + 1} \cdots \rrr_{2n-2}(k) < \rrr_{\ell + 1} \cdots \rrr_{2n-2}(k).
    \]
    Since each $\rrr_\ell$ is a reflection, there exist unique distinct integers $a , b \in [n]$ such that $\rrr_\ell$ increases $a$ and decreases $b$.  For \[\rr =  \big[(\!(a_{0}, b_1)\!), (\!(a_{1}, b_2)\!),\ldots, (\!(a_{2n-3}, b_{2n-2})\!)\big] \in \EFt_n,\] write
\begin{equation}
\label{eq:nb}
\rrk=[b_{i_1}\mod n,\ldots,b_{i_\ell}\mod n]
\end{equation}
for the sequence of $b_{i_j}$ (modulo $n$) for which $a_{i_j-1}=k \mod n$ (the abbreviation is for \emph{neighbors}).

\begin{example}
\label{ex:progression}
If we track the progression of the integer $0$ in \Cref{ex1} as the list of products $\rrr_\ell \rrr_{\ell+1} \cdots \rrr_{2n-2}(0)$ for $\ell=1,\ldots,2n-2$, we obtain the sequence
\begin{align*}\big[&-90,-88,-80,-75,-73,-65,-59,-57,-49,\\&-46,-39,-32,-24,-22,-19,-15,-10,-1\big].\end{align*}
Note that every reflection decreases $0$.   On the other hand, $1$ is unchanged until the $15$th reflection $(\!(15)\!)$, which increases $1$ to $5$, and is next modified by the $6$th reflection $(\!(\overline{5}1)\!)$, which increases $5$ to $11$.  Observe that exactly two reflections increase $1$.
\end{example}

We make the observations of~\Cref{ex:progression} precise in the following proposition, 
which gives a condition on factorizations equivalent to being tree-like.  This equivalent condition will be easier to check on the factorizations arising from distinguished subwords in~\Cref{sec:subwords}.   

\begin{proposition}
\label{connection}
    A factorization $\rr = \left[ \rrr_1,\rrr_2,\ldots,\rrr_{2n-2}\right] \in \Fact(\lambda_n)$ is tree-like if and only if there exist $a_0,\dots,a_{2n-2} \in \mathbb{Z}$ such that
\begin{equation}\label{eq:cond3}\rrr_\ell = (\!(a_{\ell - 1}, a_\ell)\!) \text{ and }
   |a_\ell - a_{\ell - 1}| < n \text{ for } \ell = 1,\dots,2n-2.\end{equation}
\end{proposition}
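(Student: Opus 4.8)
The plan is to read both conditions as two ways of choosing $n$-translates of the reflections so that consecutive reflections ``link up'', and then to show these encode the same data. The central device is the observation that a genuine chain is nothing but the trajectory of a single integer: if $\rrr_\ell=(\!(a_{\ell-1},a_\ell)\!)$ for all $\ell$, then each $\rrr_\ell$ sends $a_{\ell-1}\mapsto a_\ell$ exactly (not merely up to $n$), so $a_\ell=\rrr_\ell\rrr_{\ell-1}\cdots\rrr_1(a_0)$, and telescoping the product in the other order gives $\lambda_n(a_{2n-2})=\rrr_1\cdots\rrr_{2n-2}(a_{2n-2})=a_0$, i.e.\ $a_{2n-2}=\lambda_n^{-1}(a_0)$. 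Thus $a_0,\dots,a_{2n-2}$ are all determined by $a_0$, and since $\lambda_n^{-1}$ preserves residues $\mod n$, the chain is a lift of a \emph{closed} walk $\overline{a_0}\to\cdots\to\overline{a_{2n-2}}=\overline{a_0}$ on residues.

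First I would dispatch the easy implications. Given a tree-like $\rr$, write $\rrr_\ell=(\!(a_{\ell-1},b_\ell)\!)$ with $a_{\ell-1}<b_\ell$ and $a_\ell\equiv b_\ell \mod n$. I build a chain inductively: set $c_0=a_0$, $c_1=b_1$, and having defined $c_{\ell-1}$ with $c_{\ell-1}\equiv a_{\ell-1}\mod n$, use the congruence to shift the representative $(\!(a_{\ell-1},b_\ell)\!)$ by the unique multiple of $n$ carrying $a_{\ell-1}$ to $c_{\ell-1}$, and let $c_\ell$ be the image of $b_\ell$. Then $\rrr_\ell=(\!(c_{\ell-1},c_\ell)\!)$ and $c_\ell-c_{\ell-1}=b_\ell-a_{\ell-1}>0$, so the chain is strictly increasing. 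Conversely, given a chain as in the proposition that I already know to be strictly increasing, setting $b_\ell:=a_\ell$ recovers $a_{\ell-1}<b_\ell$ immediately, while the link $a_\ell\equiv b_\ell$ becomes the tautology $a_\ell=a_\ell$; hence $\rr$ is tree-like. So, modulo two facts — that the glued chain has all steps $<n$, and that a chain with steps bounded by $n$ is automatically increasing — the two conditions coincide.

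Both missing facts are really one statement about minimal factorizations: the trajectory $a_0,\dots,a_{2n-2}$ winds monotonically upward in steps of size $<n$. This is where I expect the real work, and where minimality of $\rr\in\Fact(\lambda_n)$ is indispensable (it fails for longer factorizations, where a long reflection $(\!(i,j)\!)$ with $|i-j|\ge n$ can appear). I would reduce $\mod n$: the projection $\AS_n\to S_n$ kills the translation $\lambda_n$, so the residue transpositions $\overline{\rrr_1},\dots,\overline{\rrr_{2n-2}}$ multiply to the identity and the residue walk is closed with $2n-2=2(n-1)$ steps. Reducedness should force this walk to use $n-1$ distinct residue pairs, each exactly twice; otherwise a repeated or backtracked edge would let one cancel two reflections and shorten $\rr$, contradicting $\ell_R(\lambda_n)=2n-2$. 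For such a lift, each traversal of an edge advances the chain to the \emph{next} representative upward, which simultaneously keeps every step inside a single period (so $|a_\ell-a_{\ell-1}|<n$) and makes the trajectory strictly increasing; the total rise is $a_{2n-2}-a_0=\lambda_n^{-1}(a_0)-a_0=n(n-1)$ (forcing $a_0\equiv 0\mod n$), consistent with $2n-2$ upward steps averaging $n/2$.

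The main obstacle is precisely this monotonicity-and-boundedness lemma: proving rigorously that reducedness forbids both long reflections and descents of the trajectory. I would establish it either by the exchange/shortening argument sketched above (showing that a long reflection, or a descent in the trajectory, yields a strictly shorter factorization of $\lambda_n$), or by invoking the description of the reflections lying below $\lambda_n$ in absolute order that is implicit in the reflection-length computation already cited for $\ell_R(\lambda_n)=2n-2$, which identifies them with the short reflections. Once this lemma is in hand, the forward and backward implications above close up and yield the stated equivalence.
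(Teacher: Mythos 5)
Your scaffolding matches the paper's: the gluing of representatives into a chain is exactly the paper's opening move, and you correctly isolate the two nontrivial facts, (A) the chain glued from a tree-like factorization has steps $<n$, and (B) a chain with steps of absolute value $<n$ is automatically increasing. The gap is in the ``main lemma'' that is supposed to deliver both facts, and it is fatal: the lemma as you state it (every minimal factorization's trajectory ``winds monotonically upward in steps of size $<n$'') is false, and each of your proposed justifications fails on a concrete example. Take $n=2$ and $\rr=\left[(\!(1,2)\!),(\!(1,4)\!)\right]$. One checks $\rrr_1\rrr_2=\lambda_2$ (rightmost factor applied first: $1\mapsto 4\mapsto 3$ and $2\mapsto -1\mapsto 0$), so $\rr\in\Fact(\lambda_2)$ since the reflection length of $\lambda_2$ is $2$; it admits the chain $a_0=2$, $a_1=1$, $a_2=4$, which has a descent and a step of size $3\geq n$. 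This breaks your argument three times over. First, the residue walk of this $\rr$ backtracks along the unique residue pair, yet nothing can be cancelled: the two factors are \emph{distinct} affine reflections lying over the same transposition, and their product is a nontrivial translation, not the identity. (Backtracking cannot possibly be an obstruction to minimality anyway---every tree-like factorization backtracks at each leaf, e.g.\ the adjacent factors $(\!(\overline{\ten}2)\!),(\!(2\ten)\!)$ in \Cref{ex1}.) Second, this $\rr$ has the \emph{same} residue walk as the tree-like factorization $\left[(\!(0,1)\!),(\!(1,2)\!)\right]$ with chain $0,1,2$, so the walk structure (``$n-1$ pairs, each exactly twice'') cannot imply anything about the lift; your step ``for such a lift, each traversal advances the chain to the next representative upward'' is a non sequitur. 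Third, your fallback---that the reflections below $\lambda_n$ in absolute order are exactly the short ones---is false, since $(\!(1,4)\!)$ occurs in a minimal factorization of $\lambda_2$ and hence lies below it. (Your aside that long reflections can only appear in non-minimal factorizations is wrong for the same reason.)

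What is actually needed, and what the paper supplies, is a counting argument that uses the chain hypothesis jointly with the specific action of $\lambda_n$ on residue classes, not the mod-$n$ projection alone. In outline: the chain forces every factor to move (in fact decrease) the trajectory of $a_{2n-2}$, and since $\lambda_n$ decreases only multiples of $n$, this gives $a_0\equiv a_{2n-2}\equiv 0 \mod n$; each factor then increases the trajectory of exactly one nonzero class, and a pigeonhole count ($2n-2$ factors against $n-1$ classes each needing at least two) shows every nonzero class is increased by exactly two factors whose increments sum to $n$, whence every step is $<n$---that is fact (A). For fact (B) the pigeonhole runs in the other direction: steps $<n$ force at least, hence exactly, two increases per nonzero class, so every factor increases some nonzero class; then (for $n>2$, with $n=2$ checked by hand, as the paper does) no factor can increase the class of $0$, so all of them decrease it, and that is precisely the statement that the chain is increasing. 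Note also that (A) and (B) are genuinely two statements with different hypotheses---monotone implies bounded, and bounded implies monotone---so they cannot be merged into one hypothesis-free claim about minimal factorizations; your proposal contains no substitute for either counting argument, so the equivalence remains unproven.
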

\begin{proof}
    First suppose that $\rr \in \Fact(\lambda_n)$ is tree-like.  Then we can choose
    \[
        a_0 < \cdots < a_{2n-2}
    \]
    such that $\rrr_\ell = (\!(a_{\ell -1}, a_\ell)\!)$.  Since for any $1 \leq \ell < 2n-2$
    \[
        \rrr_\ell \cdots \rrr_{2n-2}(a_{2n-2}) = a_{\ell-1} < a_{2n-2},
    \]
    it follows that every $\rrr_\ell$ decreases $a_{2n-2}$. So we must have $a_0 = a_{2n-2} = 0 \mod n$ because the only integers that $\lambda_n$ decreases are the multiples of $n$.

    Now since $\lambda_n$ maps each $k \in [n-1]$ to $k+n$, there must exist at least two $\rrr_\ell$ which either increase or decrease $k$. But each $\rrr_\ell$ decreases $n$ and increases some $k\neq n$, so by a pigeonhole argument, there are in fact exactly two unique factors $\rrr_\ell$ for each $k\neq n$ which increase $k$. If $\rrr_i$ and $\rrr_j$ are \emph{the} two factors which increase $k$, then
    \[
        k+n = \rrr_1\cdots \rrr_{2n-2}(k) = k + (a_i - a_{i-1}) + (a_j - a_{j-1}).
    \]
    So we must have $a_\ell - a_{\ell-1} < n$ for all $\ell$.
\medskip

     For the other direction, fix $\rr \in \Fact(\lambda_n)$ satisfying~\Cref{eq:cond3}.
          Note that the second condition $|a_\ell - a_{\ell-1}| < n$ implies that for a given $k \neq n$ there must exist $1 \leq i < j \leq 2n-2$ such that $\rrr_i$ and $\rrr_j$ increase $k$. Since we only have $2n-2$ factors, it follows again from a pigeonhole argument that each factor \emph{must} increase some $k \neq n$, and for each $k \neq n$, there are exactly two $\rrr_\ell$ which increase $k$.

    Now, since $0$ is sent to $-n(n-1)$ and each $\rrr_\ell$ can only decrease $0$ by at most $n-1$, we have that $0$ needs to be decreased by at least $n$ of the $\rrr_\ell$. Note that $a_{2n-2}$ is either increased or decreased by every $\rrr_\ell$. If $a_{2n-2} \neq 0 \mod n$, then $a_{2n-2}$ is increased by exactly two of the $\rrr_\ell$ and decreased by all the others. Since each $\rrr_\ell$ decreases only one integer $\mod n$, it follows that the two factors which increase $a_{2n-2}$ are the only factors that can decrease $n$. So we must have $n=2$. But in that case, there is only one minimal length reflection factorization of $\lambda_2$ satisfying~\Cref{eq:cond3}: \[(\!(0,1)\!)(\!(1,2)\!) = (\!(3,2)\!)(\!(2,1)\!).\] This factorization is tree-like, so the equivalence also holds for $n=2$.

    For $n > 2$, it follows that $a_{2n-2} = 0 \mod n$.  Since none of the $\rrr_\ell$ increase $0$ (since every $\rrr_\ell$ increases a $k \in [n-1]$), it follows that all $\rrr_\ell$ must decrease $0$. This implies that
    \[
        a_0 < a_1 < \cdots < a_{2n-2},
    \] 
    so that $\rr$ is tree-like.
\end{proof}

Since every reflection $r_\ell$ must decrease $0$, we immediately obtain the following corollaries.
\begin{corollary}
\label{cor:zero}
Let $\rr \in \EFt_n$ with $\rrr_\ell = (\!(a_{\ell-1}, a_\ell)\!)$.  Then $a_0 = a_{2n-2} = 0 \mod n$.
\end{corollary}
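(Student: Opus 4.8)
The plan is to deduce both congruences directly from the explicit action of $\lambda_n$ on $\mathbb{Z}$ recorded after \Cref{eq:cycle}, once the labels are normalized as in \Cref{connection}. Since $\rr \in \EFt_n$ is tree-like, the forward direction of \Cref{connection} lets me choose the labels so that $a_0 < a_1 < \cdots < a_{2n-2}$ and $\rrr_\ell = (\!(a_{\ell-1}, a_\ell)\!)$ for every $\ell$; these are the labels appearing in the statement.

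First I would track the single integer $a_{2n-2}$ through the factorization from right to left. Because $\rrr_\ell$ literally swaps the integers $a_{\ell-1}$ and $a_\ell$, the computation telescopes: $\rrr_{2n-2}(a_{2n-2}) = a_{2n-3}$, then $\rrr_{2n-3}(a_{2n-3}) = a_{2n-4}$, and so on, so that $\lambda_n(a_{2n-2}) = \rrr_1\cdots\rrr_{2n-2}(a_{2n-2}) = a_0$. As $a_0 < a_{2n-2}$, the element $\lambda_n$ strictly decreases $a_{2n-2}$. Now I invoke the description of $\lambda_n$ following \Cref{eq:cycle}: it sends each $k \equiv 0 \mod n$ to $k - n(n-1)$ and each $k \not\equiv 0 \mod n$ to $k+n$, so the only integers it decreases are the multiples of $n$. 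Hence $a_{2n-2} \equiv 0 \mod n$. Finally, since $\lambda_n$ maps multiples of $n$ to multiples of $n$, the value $a_0 = \lambda_n(a_{2n-2})$ is again a multiple of $n$, which gives $a_0 \equiv 0 \mod n$.

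The argument is essentially immediate, so the only point requiring care is the normalization in the first paragraph: I must make sure the labels $a_\ell$ named in the statement are exactly the increasing sequence produced inside the proof of \Cref{connection} rather than some other admissible choice, after which both congruences drop out of the telescoping computation together with the explicit formula for $\lambda_n$.
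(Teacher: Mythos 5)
Your proof is correct and takes essentially the same route as the paper: the paper deduces this corollary immediately from the forward direction of the proof of \Cref{connection}, where the identical telescoping observation $\rrr_\ell \cdots \rrr_{2n-2}(a_{2n-2}) = a_{\ell-1} < a_{2n-2}$ shows $a_{2n-2}$ is decreased by $\lambda_n$, and hence both endpoints are multiples of $n$ since $\lambda_n$ only decreases multiples of $n$. Your care about pinning the labels $a_\ell$ to the increasing normalization produced in \Cref{connection} is exactly the right point to flag, since without it the inequality $a_0 < a_{2n-2}$ (and hence the whole argument) would not be available.
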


\begin{corollary}
\label{cor:pairs}
For $k \in [n-1]$, there are exactly two reflections that increase $k$---the first and last reflections to use a number equal to $k \mod n$.  We denote these reflections 
\begin{align*}
\rrs=(\!(\overline{b}_k,k)\!) &\text{ and } \rrf=(\!(k,b_k)\!) \text{ if } b_k>k \text{ and}\\
\rrs=(\!(b_k,k)\!) &\text{ and } \rrf=(\!(\overline{k},b_k)\!) \text{ if } b_k<k.
\end{align*}
We call $\rrs$ the \defn{left end} of its pair and $\rrf$ the \defn{right end} of its pair.
\end{corollary}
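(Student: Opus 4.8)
The plan is to read everything off from the trajectory of $k$ through the factorization, continuing the bookkeeping begun in the proof of \Cref{connection}. Fix $k\in[n-1]$, write $\rrr_\ell=(\!(a_{\ell-1},a_\ell)\!)$ with $a_0<a_1<\cdots<a_{2n-2}$ as in that proof, and set $v_\ell\coloneqq \rrr_{\ell+1}\cdots\rrr_{2n-2}(k)$ for $0\le \ell\le 2n-2$, so that $v_{2n-2}=k$, $v_0=\lambda_n(k)=k+n$, and $v_{\ell-1}=\rrr_\ell(v_\ell)$. The first thing I would record is that $\rrr_\ell$ changes $v_\ell$ if and only if it uses a number congruent to the class of $v_\ell$: since the endpoints $a_{\ell-1}<a_\ell$ of a reflection lie in distinct classes (consecutive gaps lie in $\{1,\dots,n-1\}$), at most one endpoint meets the class of $v_\ell$, and $\rrr_\ell$ increases $v_\ell$ exactly when $v_\ell\equiv a_{\ell-1}$ and decreases it exactly when $v_\ell\equiv a_\ell$.

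Next I would invoke the fact established inside the proof of \Cref{connection} that every reflection decreases only the class of $0$ and increases exactly one element of $[n-1]$; in particular no reflection decreases $k$. Hence $v_{2n-2},v_{2n-3},\dots,v_0$ is nondecreasing, and, as shown there, it makes exactly two strictly increasing steps, at indices $i<j$, whose increments sum to $n$. Tracking classes then produces the skeleton $v_\ell=k$ for $\ell\ge j$, $v_\ell$ constant in a single class $b_k$ for $i\le \ell<j$, and $v_\ell=k+n$ for $\ell<i$, where $a_{j-1}\equiv k\equiv a_i$ and $b_k$ is the common class of $a_j$ and $a_{i-1}$. Note $b_k\ne k$ (there is a genuine jump) and $b_k\ne 0$ (otherwise the step at $i$ would increase the class of $0$, which no reflection does), so $b_k\in[n-1]\setminus\{k\}$.

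The phrase ``first and last to use class $k$'' now falls straight out of this skeleton. For $\ell>j$ we have $v_\ell=k$, which lies in class $k$; were $\rrr_\ell$ to use class $k$ it would have to change $v_\ell$, contradicting that the skeleton is constant above $j$. Thus $\rrr_j$ is the last reflection using class $k$; symmetrically, $v_\ell=k+n\equiv k$ for $\ell<i$ forces no reflection below $i$ to use class $k$, so $\rrr_i$ is the first. Since $a_{j-1}\equiv k$ and $a_i\equiv k$, both $\rrr_i$ and $\rrr_j$ genuinely use class $k$, identifying the two increasing reflections $\rrs=\rrr_i$ and $\rrf=\rrr_j$ with the extremal class-$k$ reflections.

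For the explicit representatives I would finally pin down the common intermediate value $v_i=v_{j-1}$: it is the unique representative of class $b_k$ strictly between $k$ and $k+n$, namely $b_k$ if $b_k>k$ and $b_k+n$ if $b_k<k$. Writing the right end $\rrf$ as the reflection carrying $k$ to this value and the left end $\rrs$ as the one carrying it to $k+n$ then reproduces exactly $\rrs=(\!(\overline{b}_k,k)\!),\ \rrf=(\!(k,b_k)\!)$ when $b_k>k$ and $\rrs=(\!(b_k,k)\!),\ \rrf=(\!(\overline{k},b_k)\!)$ when $b_k<k$. I expect the only delicate point to be this last bookkeeping of representatives---attaching the affine shift $\overline{b}_k=b_k-n$ or $\overline{k}=k-n$ to the correct endpoint so that each reflection realizes its prescribed jump; everything preceding it is a direct reading of the monotone trajectory.
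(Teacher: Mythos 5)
Your overall strategy is the same as the paper's: you import from the proof of \Cref{connection} that no reflection decreases any $k\in[n-1]$ and that exactly two reflections increase $k$, you track the trajectory of $k$ to obtain the three-plateau skeleton $k \to (\text{intermediate value}) \to k+n$, and you read the explicit forms of $\rrs$ and $\rrf$ off that skeleton. The paper's proof is exactly this computation; if anything you are more careful than the paper on one point, namely the explicit verification that \emph{no} reflection outside the two increasing ones uses class $k$ (the ``first and last'' clause), which the paper leaves implicit.

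However, one step is genuinely wrong: the parenthetical claim that $b_k\neq 0$ (``otherwise the step at $i$ would increase the class of $0$, which no reflection does''), and the resulting restriction $b_k\in[n-1]\setminus\{k\}$. This is false, and it fails exactly when $k$ is adjacent to the vertex $n$ in the corresponding tree. In the paper's running example ($n=10$), the two reflections increasing $k=2$ are $\rrr_1=(\!(\overline{10},2)\!)$ and $\rrr_2=(\!(2,10)\!)$, so the intermediate value is $10\equiv 0 \pmod n$, i.e.\ $b_2=10=n$. The flaw in your justification is a conflation of two different notions. What the proof of \Cref{connection} establishes is that no reflection increases the integer $0$ in the paper's \emph{trajectory} sense, i.e.\ no $\rrr_\ell$ increases $\rrr_{\ell+1}\cdots\rrr_{2n-2}(0)$; this is not the same as saying that no reflection sends integers lying in residue class $0$ upward. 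In the scenario you are trying to exclude, at step $i$ it is the trajectory of $k$ that occupies class $0$, while the trajectory of $0$ occupies class $k$ (the two classes have swapped), so $\rrr_i$ increases class-$0$ integers while still \emph{decreasing} the trajectory of $0$---no contradiction. Concretely, in the example, $\rrr_1=(\!(0,2)\!)$ sends the trajectory value $10$ of $k=2$ up to $12$, and sends the trajectory value $-88$ of $0$ down to $-90$. The repair is immediate: drop the claim and allow $b_k\in[n]\setminus\{k\}$. Then $b_k=n$ falls under your case $b_k>k$, your representative analysis still identifies the intermediate value as $b_k$, and the formulas $\rrs=(\!(\overline{b}_k,k)\!)$, $\rrf=(\!(k,b_k)\!)$ come out exactly as in the statement. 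Everything else in your argument is correct.
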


\begin{proof}
In the proof of \cref{connection}, we observed that for each $k\neq n$, there are unique factors $\rrr_{i}$ and $\rrr_{j}$, where $i < j$, which increase $k$, and every $\rrr_\ell$ is in one of these pairs. Assume $b_k>k$.  Then $\rrr_\ell\cdots \rrr_{2n-2}(k) = k$ for $\ell > j$, so $\rrr_j = (\!(k, b_k)\!)$ for some $b_k > k$. Moreover, 
    \[
        \rrr_\ell\cdots \rrr_{2n-2}(k) = \begin{cases}
            b_k & \text{if } i< \ell \leq j, \\
            k+n & \text{if } \ell \leq i,
        \end{cases}
    \]
    so $\rrr_i = (\!(b_k, k+n)\!) = (\!(\overline{b}_k,k)\!)$.  The case for $b_k<k$ is similar.
\end{proof}

\subsection{Tree embeddings}
\label{sec:trees}

\begin{definition}
We write $\EE_n$ for the set of plane-embedded vertex-labeled trees on $[n]$ with a marked edge adjacent to the vertex $n$ (up to orientation preserving homeomorphism of the plane).
\end{definition}

\begin{example}
The $30=3!\cdot \Cat(3)$ trees in $\EE_4$ are illustrated in~\Cref{fig:all_trees}.
\end{example}

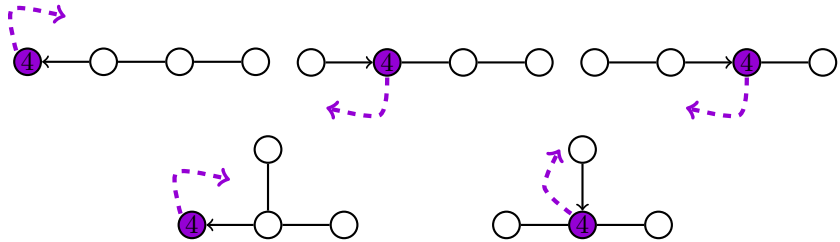
\begin{figure}[htbp]
\[
\begin{tikzpicture}
\node[draw,circle,inner sep=0pt,thick,minimum width=1em,black,fill=darkred,fill opacity=.2,text opacity=1] (1) at (0,0) {$4$};
\node[draw,circle,inner sep=0pt,thick,minimum width=1em] (2) at (1,0) {};
\node[draw,circle,inner sep=0pt,thick,minimum width=1em] (3) at (2,0) {};
\node[draw,circle,inner sep=0pt,thick,minimum width=1em] (4) at (3,0) {};
\draw[<-,thick] (1) -- (2);
\draw[-,thick] (2) -- (3);
\draw[-,thick] (3) -- (4);
\draw[->,dashed,ultra thick,darkred] plot [smooth] coordinates {(-.15,.15) (-.2,.7) (.5,.6)};
\end{tikzpicture} \hspace{1em}
\raisebox{-.62\height}{\begin{tikzpicture}
\node[draw,circle,inner sep=0pt,thick,minimum width=1em] (1) at (0,0) {};
\node[draw,circle,inner sep=0pt,thick,minimum width=1em,black,fill=darkred,fill opacity=.2,text opacity=1] (2) at (1,0) {$4$};
\node[draw,circle,inner sep=0pt,thick,minimum width=1em] (3) at (2,0) {};
\node[draw,circle,inner sep=0pt,thick,minimum width=1em] (4) at (3,0) {};
\draw[->,thick] (1) -- (2);
\draw[-,thick] (2) -- (3);
\draw[-,thick] (3) -- (4);
\draw[->,dashed,ultra thick,darkred] plot [smooth] coordinates {(1,-.2) (.9,-.7) (.2,-.6)};
\end{tikzpicture}} \hspace{1em}
\raisebox{-.62\height}{\begin{tikzpicture}
\node[draw,circle,inner sep=0pt,thick,minimum width=1em]  (1) at (0,0) {};
\node[draw,circle,inner sep=0pt,thick,minimum width=1em]  (2) at (1,0) {};
\node[draw,circle,inner sep=0pt,thick,minimum width=1em,black,fill=darkred,fill opacity=.2,text opacity=1] (3) at (2,0) {$4$};
\node[draw,circle,inner sep=0pt,thick,minimum width=1em]  (4) at (3,0) {};
\draw[-,thick] (1) -- (2);
\draw[->,thick] (2) -- (3);
\draw[-,thick] (3) -- (4);
\draw[->,dashed,ultra thick,darkred] plot [smooth] coordinates {(2,-.2) (1.9,-.7) (1.2,-.6)};
\end{tikzpicture}}\]
\[\begin{tikzpicture}
\node[draw,circle,inner sep=0pt,thick,minimum width=1em] (1) at (0,0) {};
\node[draw,circle,inner sep=0pt,thick,minimum width=1em] (2) at (0,1) {};
\node[draw,circle,inner sep=0pt,thick,minimum width=1em,black,fill=darkred,fill opacity=.2,text opacity=1] (3) at (-1,0) {$4$};
\node[draw,circle,inner sep=0pt,thick,minimum width=1em] (4) at (1,0) {};
\draw[-,thick] (1) -- (2);
\draw[->,thick] (1) -- (3);
\draw[-,thick] (1) -- (4);
\draw[->,dashed,ultra thick,darkred] plot [smooth] coordinates {(-1.15,.15) (-1.2,.7) (-.5,.6)};
\end{tikzpicture}
\hspace{5em}
 \begin{tikzpicture}
\node[draw,circle,inner sep=0pt,thick,minimum width=1em,black,fill=darkred,fill opacity=.2,text opacity=1] (1) at (0,0) {$4$};
\node[draw,circle,inner sep=0pt,thick,minimum width=1em] (2) at (0,1) {\phantom{$1$}};
\node[draw,circle,inner sep=0pt,thick,minimum width=1em] (3) at (-1,0) {\phantom{$3$}};
\node[draw,circle,inner sep=0pt,thick,minimum width=1em] (4) at (1,0) {\phantom{$2$}};
\draw[<-,thick] (1) -- (2);
\draw[-,thick] (1) -- (3);
\draw[-,thick] (1) -- (4);
\draw[->,dashed,ultra thick,darkred] plot [smooth] coordinates {(-.15,.15) (-.5,.5) (-.3,1)};
\end{tikzpicture}\]
\caption{The $30=3!\cdot \Cat(3)$ trees in $\EE_4$.  Each tree has only the vertex $4$ labeled, and so corresponds to $3!$ vertex-labeled trees in $\EE_4$ by choosing a labeling of the unlabeled vertices by $1,2,3$.}
\label{fig:all_trees}
\end{figure}

Given an embedded vertex-labeled tree $T\in \EE_n$, we produce a sequence $\rrT$ of $2n-2$ affine reflections (this sequence will turn out to be a tree-like factorization of $\lambda_n$).  Starting at the vertex labeled $n$, walk around the embedded tree $T$ clockwise (so that every edge is traversed exactly twice)---initially walking along the marked edge adjacent to $n$.  Record the edges visited as
  \begin{equation}
  \label{eq:vertices}
    \Big[(v_0,v_1),(v_1,v_2),\ldots,(v_{2n-3},v_{2n-2})\Big]
  \end{equation}
  with the convention that $v_0=n$ and $v_{2n-2}=n$. 
  Then $\rrT$ is defined to be the following sequence of $2n-2$ affine reflections:
  \begin{align*}
  \rrT&\coloneqq\Big[(\!(i_1,j_1)\!),(\!(i_2,j_2)\!),\ldots, (\!(i_{2(n-1)},j_{2(n-1)})\!)\Big], \text{ where}\\ 
  (\!(i_k,j_k)\!) &\coloneqq \begin{cases} (\!(v_{k-1},v_k)\!) & \text{if }v_{k-1}<v_k \\ (\!(\overline{v}_{k-1},v_k)\!) & \text{if } v_{k-1} > v_k  \end{cases} \text{ and } \overline{i}=i-n.\end{align*}

Conversely, given $\rr \in \EFt_n$, we define a corresponding plane embedded tree $\Trr$.  $\Trr$ has vertex set $[n]$; for each reflection $\rr_i$ that can be written as $(\!(a,b)\!)$ with $1 \leq a < b \leq n$, there is an edge connecting $a$ and $b$ (exactly half the reflections have this property by~\Cref{cor:pairs}).  It follows from \cref{def:cyclic_factorizations} that $\Trr$ is connected, and so it must be a tree since it has only $n-1$ edges.  The embedding of $\Trr$ is determined by placing the neighbors of vertex $k$ clockwise around $k$ in the order in which they appear in $\rrk$.

\begin{example}
  The factorization $\rr$ in~\Cref{ex1} corresponds to the embedded tree $T$ in~\Cref{fig:main}, where the marked edge adjacent to $10$ is $(10,2)$.
\end{example}

\begin{theorem}
\label{thm:tree_like}
  For any embedded vertex-labeled tree $T \in \EE_n$, $\rrT$ is a tree-like factorization.  For any tree-like factorization $\rr \in \EFt_n$, $\Trr$ is an embedded vertex-labeled tree.  The maps $T\mapsto \rrT$ and $\rr \mapsto \Trr$ are mutually inverse bijections between $\EE_n$ and $\EFt_n$.
\end{theorem}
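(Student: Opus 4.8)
The plan is to verify that the two assignments $T\mapsto\rrT$ and $\rr\mapsto\Trr$ are well defined (land in $\EFt_n$ and $\EE_n$ respectively) and are mutually inverse. For the forward map, note first that the clockwise walk of \Cref{eq:vertices} traverses each of the $n-1$ edges exactly twice, so $\rrT$ has exactly $2n-2$ reflections, which is the reflection length of $\lambda_n$ by \Cref{eq:cycle}. To invoke \Cref{connection} I would produce its witnesses explicitly: set $a_0:=n$ and define
\[ a_\ell := a_{\ell-1}+(v_\ell-v_{\ell-1})\ \text{ if } v_{\ell-1}<v_\ell, \qquad a_\ell := a_{\ell-1}+(v_\ell-v_{\ell-1}+n)\ \text{ if } v_{\ell-1}>v_\ell. \]
A one-line check shows that in either case $0<a_\ell-a_{\ell-1}<n$, that $a_\ell\equiv v_\ell \pmod n$, and that the $\ell$-th reflection of $\rrT$ is exactly $(\!(a_{\ell-1},a_\ell)\!)$. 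Hence $a_0<a_1<\cdots<a_{2n-2}$ with all gaps below $n$, so condition \Cref{eq:cond3} holds; by \Cref{connection} it then only remains to prove that the product of $\rrT$ equals $\lambda_n$.

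That product computation is the heart of the argument. Writing $w$ for the product, $w\in\AS_n$, so it suffices to compute $w$ on one lift of each residue class, which I would do by tracking residues through the chain exactly as in \Cref{ex:progression}. The key geometric input is that the Euler tour crosses each edge $\{i,j\}$ once in each direction and, since $i\neq j$, exactly one crossing is an ascent and the other a descent; consequently there are exactly $n-1$ descents. Tracking the largest lift $a_{2n-2}$ (a lift of $n$), the chain telescopes and sends $a_{2n-2}\mapsto a_0=a_{2n-2}-n(n-1)$, matching $\lambda_n$ on multiples of $n$. For $k\in[n-1]$, tracking the lift of $k$ from the right, its value stays fixed until the last traversal of the edge joining $k$ to its neighbor toward $n$, and precisely the two traversals of that edge raise it, by a total of $n$, so that $w(k)=k+n$. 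This identifies $w$ with $\lambda_n$, whence $\rrT\in\Fact(\lambda_n)$ and, by \Cref{connection}, $\rrT\in\EFt_n$.

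For the backward map I would argue from \Cref{cor:pairs}: for each $k\in[n-1]$ exactly one of its two increasing reflections can be written as $(\!(a,b)\!)$ with $1\le a<b\le n$ (the other carries a barred entry), so $\Trr$ has exactly $n-1$ edges on the vertex set $[n]$. Since every label $k\in[n-1]$ occurs among the recorded neighbors and the sequences $\rrk$ chain the vertices together, $\Trr$ is connected, hence a tree; the cyclic order prescribed by $\rrk$ furnishes the plane embedding, and the marked edge is the one used at the first step, namely the edge from $n$ to its first recorded neighbor. Thus $\Trr\in\EE_n$. To see the maps are inverse, for $\Trr[\rrT]=T$ I would observe that the ascending reflections $(\!(a,b)\!)$, $a<b$, extracted from $\rrT$ are exactly the edges traversed upward in the walk, hence the edges of $T$, while $\rrk$ records the clockwise neighbor order around each vertex, recovering both the embedding and the marked edge; conversely, reading off the clockwise walk around $\Trr$ reproduces the vertex sequence $(v_0,\ldots,v_{2n-2})$ and therefore $\rr$.

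I expect the main obstacle to be the product computation $w=\lambda_n$: because the affine reflections do not commute and each residue recurs $\deg(\cdot)$ times along the chain, one must genuinely track each residue through the entire product and show, using the one-ascent–one-descent structure of the Euler tour, that every $k\neq n$ is advanced by exactly $n$ while $n$ is pushed back by $n(n-1)$. Pinning down the exact cancellation of the contributions of the child edges incident to each vertex is the delicate point, and it is what makes \Cref{connection} usable here. The embedding-level claims—connectedness of $\Trr$ and the precise matching of the cyclic neighbor orders $\rrk$ with the clockwise embedding—are the secondary technical points, but I expect them to follow routinely once the reflection/walk dictionary above is in place.
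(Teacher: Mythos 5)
Your proposal is correct and takes essentially the same route as the paper's proof: both establish $\rrT\in\Fact(\lambda_n)$ by reading the walk counterclockwise and tracking each residue through the two traversals of its edge toward $n$ (with the intermediate value being a lift of the parent, untouched by the subtree reflections), both get the tree-like property directly from the construction (you via explicit witnesses for \Cref{connection}, the paper via \Cref{def:treelike_factorizations}), and both treat the backward map and inverse check as routine. The only cosmetic difference is in the residue-$0$ class, where the paper counts the $n-1$ subtractions of $n$ while you telescope $a_{2n-2}\mapsto a_0$.
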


\begin{proof}
We first show that $\rrT$ is a factorization in $\Fact(\lambda_n)$. For $1 \leq i \leq n-1$ and $m \in \mathbb{Z}$, we wish to show that $i+mn$ is sent to $i+(m+1)n$ to conclude that the composition of reflections produced by $\rrT$ gives the first product of cycles in~\Cref{eq:cycle}.  By periodicity, it is enough to show this for $m=0$.

We will compute the composition of the reflections from right to left and show that we obtain $\lambda_n$.  We record the list of reflections by starting at the vertex labeled $n$ and walking around the tree $T$ counterclockwise, initially walking along the marked edge incident with $n$.  This allows us to read the list of vertices in~\Cref{eq:vertices} from right to left.  Suppose the first edge using the vertex $i$ encountered on this counterclockwise walk is the edge $(j,i)$ traversed from the vertex $j$ to $i$.

\begin{itemize}
\item If $i<j$, then we record the reflection $(\!(i,j)\!)$, which sends $i$ to $j$---and all edges encountered until we revisit the edge $(i,j)$ (now traversed from the vertex $i$ to $j$) do not involve $j$.  The second time the edge is revisited, we record the reflection $(\!(j-n,i)\!)=(\!(j,i+n)\!)$, and thus sends $j$ to $i+n$.
\item If $i>j$, then we record the reflection $(\!(i-n,j)\!)$, which sends $i$ to $j+n$.  Until we walk on this edge again, all other edges will not affect $j$. The second time the edge is revisited, we record the reflection $(\!(j,i)\!)=(\!(j+n,i+n)\!)$, which sends $j+n$ to $i+n$.
\end{itemize}
In each case, we conclude that $i$ is sent to $i+n$.

It remains to show that $\rrT$ also gives the second product of cycles in~\Cref{eq:cycle}---again by periodicity, it is enough to show $n$ is sent to $n-n(n-1)$.  Since for every $1 \leq i<j \leq n$ every edge $(i,j)$ is traversed twice, once as just $(i,j)$ and once as $(j-n,i)$, and since every pair of adjacent reflections share a letter, $n$ is subtracted from the quantity exactly $(n-1)$ times---once for each pair of edges on the walk.  Thus, $n$ is sent to $n-n(n-1)$, as desired. 

By construction, the factorization $\rrT$ of $\lambda_n$ satisfies \cref{def:treelike_factorizations}, and so is tree-like.  It is clear that the inverse is given by the map $\rr \mapsto \Trr$.
\end{proof}

\begin{corollary}
\label{nest}
Let $\rr = \left[ \rrr_1,\rrr_2,\ldots,\rrr_{2n-2}\right] \in \EFt_n$, and write $\rrr_\ell = (\!(a_{\ell-1}, a_\ell)\!)$ with
$    a_0 < \cdots < a_{2n-2}$.  Then
\begin{enumerate}[label=(\roman*)]
        \item\label{nest2} If $(\!(a_{\ell-1}, a_\ell)\!)$ is to the left of $\rrs$ or to the right of $\rrf$, then $a_{\ell-1}, a_\ell \not= k \mod n$.
        
        \item\label{nest3} If $(\!(a_{\ell-1}, a_\ell)\!)$ is between $\rrs$ and $\rrf$, then $a_{\ell-1}, a_\ell \not= b_k \mod n$. 

        \item\label{nest4} $(\!(a_{\ell-1}, a_\ell)\!)$ is between $\rrs$ and $\rrf$ iff $(\!(\overline{a}_\ell,a_{\ell-1})\!)$ is also between $\rrs$ and $\rrf$.
    \end{enumerate}
\end{corollary}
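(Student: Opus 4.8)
The plan is to read every statement through the bijection of \Cref{thm:tree_like}, regarding $\rr$ as the clockwise walk around the embedded tree $T=\Trr$ that begins at the root $n$ and first traverses the marked edge. Under this dictionary the reflection $\rrr_\ell=(\!(a_{\ell-1},a_\ell)\!)$ records the $\ell$-th edge--traversal of the walk, joining the vertices $a_{\ell-1}\mod n$ and $a_\ell\mod n$, and two reflections carry the same unordered pair of vertices precisely when they record the two directed traversals of one tree edge. I would first pin down the tree meaning of $\rrs$, $\rrf$, and $b_k$: by \Cref{cor:pairs} these end reflections are the first and the last reflections of $\rr$ to use a number $\equiv k \pmod n$, so in the walk they are the first arrival at vertex $k$ and the last departure from $k$. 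Because the walk starts at the root, it first reaches $k$ along the edge to $k$'s parent and last leaves $k$ along that same edge; hence $b_k\mod n$ is exactly the parent of $k$, and $\rrs,\rrf$ are the two traversals of the parent edge.

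Part (i) is then immediate from this description: since $\rrs$ and $\rrf$ are the first and last reflections using $k\mod n$, no reflection to the left of $\rrs$ or to the right of $\rrf$ uses $k\mod n$, that is, $a_{\ell-1},a_\ell\neq k\mod n$ there. For part (ii) I would invoke the contiguity of subtrees in a depth-first walk: strictly between the first arrival at $k$ and the last departure from $k$, the walk visits only $k$ and its descendants and traverses exactly the edges of the subtree rooted at $k$. The parent $b_k\mod n$ is not a vertex of that subtree, so it appears in no intermediate traversal, giving $a_{\ell-1},a_\ell\neq b_k\mod n$.

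Part (iii) combines the same contiguity with the identification of the partner reflection. Writing the edge recorded by $\rrr_\ell$ as $(i,j)$ with $1\le i<j\le n$, its two reflections are $(\!(i,j)\!)$ and $(\!(\overline{j},i)\!)$, and a direct check (using $(\!(x,y)\!)=(\!(x+n,y+n)\!)$) shows that the involution $(\!(a_{\ell-1},a_\ell)\!)\mapsto(\!(\overline{a}_\ell,a_{\ell-1})\!)$ interchanges these two. Since the two traversals of any edge are both inside the subtree--exploration window (when the edge lies in the subtree of $k$) or both outside it, one lies strictly between $\rrs$ and $\rrf$ if and only if the other does, which is part (iii).

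The only place where real work is needed is the claim that being positionally between $\rrs$ and $\rrf$ is the same as recording a traversal inside the subtree rooted at $k$; this is precisely the contiguity of subtrees under a depth-first traversal, and it is where the walk interpretation of \Cref{thm:tree_like} is doing the work. Once that is established, parts (i)--(iii) are just the observations that the parent edge brackets this block, that the parent vertex lies outside it, and that each internal edge contributes both of its traversals to it.
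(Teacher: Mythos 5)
Your argument is correct, and it is not circular---\Cref{thm:tree_like} is proved before \Cref{nest} and makes no use of it---but it takes a genuinely different route from the paper's. The paper never invokes the tree bijection: parts (i) and (ii) are read off from the trajectory formula in the proof of \Cref{cor:pairs} (namely $\rrr_\ell\cdots\rrr_{2n-2}(k)$ equals $k$ to the right of $\rrf$, equals $b_k$ strictly between the pair, and equals $k+n$ to the left of $\rrs$), together with the observation that a reflection using a residue class moves every integer in that class; part (iii) is then proved by induction on the number of factors between $\rrs$ and $\rrf$, using (i) and (ii) to show that the reflections in this window decompose into consecutive nested blocks $(\!(k,c_j)\!),\dots,(\!(\bar{c}_j,k)\!)$, so any reflection in the window is either an endpoint of a block (done directly) or interior to one (done by induction). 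You instead pull the whole statement through \Cref{thm:tree_like}: reflections become edge traversals of the clockwise walk, $\rrs$ and $\rrf$ become the two traversals of the parent edge of $k$, the window between them becomes the exploration of the subtree rooted at $k$, and your verification that $(\!(a_{\ell-1},a_\ell)\!)\mapsto(\!(\overline{a}_\ell,a_{\ell-1})\!)$ swaps the two traversals of a single edge is right. Your route is more conceptual and makes (iii) essentially transparent; what it costs is self-containedness, since it leans on the inverse direction of \Cref{thm:tree_like} (which the paper itself states only tersely) and on the contiguity of subtree exploration in an Euler tour, which you cite as standard rather than prove. It is worth noticing that the paper's nested-block induction \emph{is} precisely that contiguity fact, established algebraically on the factorization side---so the two proofs carry the same mathematical content, with the key step proved explicitly in the paper and outsourced to a standard fact about depth-first walks in yours.
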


\subsection{Enumeration}

The bijection of~\Cref{thm:tree_like} gives the following interesting enumeration for the tree-like factorizations of $\lambda_n$.

\begin{corollary}
\label{cor:tree_like_enumeration}
For all $n\geq 2$, the number of tree-like factorizations of $\lambda_n$ is
\[\left|\EFt_n\right| = n! \Cat(n),\] where $\Cat(n) = \frac{1}{n+1}\binom{2n}{n}$ is the $n$th \defn{Catalan number}.
\end{corollary}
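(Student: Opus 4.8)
The plan is to read the enumeration off the bijection of \Cref{thm:tree_like}, which identifies $\EFt_n$ with the set $\EE_n$ of plane-embedded, vertex-labeled trees on $[n]$ carrying a marked edge at the vertex $n$. It therefore suffices to count $\EE_n$, and I would do this by factoring such a marked embedded tree into two independent pieces: an underlying \emph{plane shape} (the embedded tree with its marked edge, but with the vertices other than $n$ left anonymous) and a \emph{labeling} of those remaining vertices. The product of the two contributions will be the claimed $n!\,\Cat(n)$.

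First I would invoke \Cref{thm:tree_like} to replace factorizations by embedded trees. For the shape count, I would use the pair structure of \Cref{cor:pairs}, which splits the $2n-2$ reflections into matched pairs consisting of a left end $\rrs$ and a right end $\rrf$, together with the nesting statement of \Cref{nest}: any reflection lying between $\rrs$ and $\rrf$ has its partner there as well, so the system of pairs is non-crossing. Equivalently, in the tree picture the marked edge roots the plane tree at $n$ and the clockwise walk traverses each edge exactly twice; opening a bracket at the first traversal of an edge and closing it at the second yields a balanced arrangement, so the shapes are enumerated by a Catalan number. Independently, the vertices other than $n$ receive the remaining labels; since the rooting at $n$ together with the plane structure pins down a position for each such vertex, distinct assignments give distinct trees and so contribute a factorial. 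Assembling the Catalan factor for the shapes with the factorial factor for the labelings produces the closed form recorded in the statement.

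So the steps, in order, are: (1) pass from $\EFt_n$ to $\EE_n$ via \Cref{thm:tree_like}; (2) use \Cref{cor:pairs} and \Cref{nest} to show the pattern of pair endpoints is non-crossing, so the number of shapes is Catalan; (3) check that labeling the non-root vertices is unconstrained, i.e.\ that the rooting at $n$ destroys all surviving plane automorphisms, giving the factorial factor; and (4) combine the two counts into $n!\,\Cat(n)$. The main obstacle is step (2): converting the nesting relation of \Cref{nest} into an explicit bijection with a Catalan-counted family—Dyck paths, or non-crossing perfect matchings of the traversed half-edges—while using the marked edge at $n$ as the canonical basepoint that removes the cyclic ambiguity of the embedding and thereby prevents any overcounting. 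Once the shapes are enumerated by a Catalan number and the labelings by a factorial, the product form asserted in the corollary follows.
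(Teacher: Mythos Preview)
Your proposal is correct and follows the same route as the paper: invoke \Cref{thm:tree_like} to pass to $\EE_n$, then factor the count as a Catalan number for the plane shape times a factorial for the labeling of the non-root vertices. The paper's proof is shorter only because it cites the classical Catalan enumeration of rooted plane trees directly, whereas you rederive it via the non-crossing pair structure from \Cref{cor:pairs} and \Cref{nest}; once you are on the tree side this detour back through factorizations is unnecessary, though not wrong.
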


\begin{proof}
The number of rooted planar trees with $n$ vertices is $\Cat(n)$.  By marking an edge, we remove any symmetries.  Since the number of plane-embedded vertex-labeled trees with $n$ vertices is $n!\Cat(n)$, we conclude the same enumeration for $\EFt_n$ by~\Cref{thm:tree_like}.
\end{proof}

\begin{remark}
\Cref{cor:tree_like_enumeration} is not our titular ``elaborate proof''---we are relying on previous combinatorial enumerations of rooted planar trees.  The issue is that we do not know what braid varieties over the loop group for $\mathrm{SL}_n$ correspond to tree-like factorizations; Minh-T\^am Trinh has constructed certain ``generalized Steinberg varieties'' using unipotent elements that give this enumeration---but using the simple Lie group and not its loop group.
\end{remark}

\section{Cyclic trees}
 \label{sec:cyclic_trees}

In~\Cref{sec:cyclic_factorizations}, we describe sets of certain \emph{cyclic} factorizations of $\lambda_n$ in the affine symmetric group, which we will show in~\Cref{sec:cyclic_from_trees} are encoded by clockwise walks around \emph{cyclically-embedded} vertex-labeled trees.

\subsection{Cyclic factorizations}
 \label{sec:cyclic_factorizations}

\begin{definition}
\label{def:cyclic_factorizations}
A tree-like factorization
\[\rr =  \big[(\!(a_{0}, b_1)\!), (\!(a_{1}, b_2)\!),\ldots, (\!(a_{2n-3}, b_{2n-2})\!)\big] \in \EFt_n\]
is \defn{cyclic} if
\begin{enumerate}[label=(\roman*)]
    \item\label{CF3} if $\rrk[n]=[b_{i_1},\ldots,b_{i_\ell}]$ then \[b_{i_1} < \cdots < b_{i_\ell}; \text{ and}\]
    \item\label{CF2} for any $1\leq k< n$, if $\rrk=[b_{i_1},\ldots,b_{i_\ell}]$, then there exists some $1 \leq j \leq \ell$ for which \[b_{i_j} < b_{i_{j+1}} < \cdots < b_{i_{\ell-1}}< k < b_{i_1} < \cdots < b_{i_{j-1}}.\]
\end{enumerate}
We write $\Ft_n$ for the set of all cyclic factorizations of $\lambda_n$.
\end{definition}

Note that $b_{i_\ell}$ is replaced by $k$ in \crefitem{def:cyclic_factorizations}{CF2}.

\begin{example}
  The tree-like factorization in~\Cref{ex1} is also cyclic. \crefitem{def:cyclic_factorizations}{CF3} is satisfied bececause $\rrk[10]=[2,5,9]$ and   $2<5<9$.  As an example of \crefitem{def:cyclic_factorizations}{CF2}, $\rrk[1]=[3,4,8,5]$ satisfies $1<3<4<8$.
\end{example}

Our goal now is to give an equivalent characterization of cyclic factorizations, again to more easily connect with the factorizations arising from trees in~\Cref{sec:cyclic_from_trees}. We will require the following easy lemma concerning cyclic orderings.

\begin{lemma}
\label{clockIneq}
    Suppose that $a,v,b \in \mathbb{Z}$ such that $v-n < a < v < b < v + n$. Let $1 \leq \widetilde{a}, \widetilde{v}, \widetilde{b} \leq n$ be the corresponding values modulo $n$. Then $a + n < b$ if and only if $\widetilde{a} < \widetilde{b} < \widetilde{v}$, $\widetilde{v} < \widetilde{a} < \widetilde{b}$, or $\widetilde{b} < \widetilde{v} < \widetilde{a}$.
\end{lemma}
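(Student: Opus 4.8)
The plan is to translate the statement about $a+n<b$ into a statement about the two ``gaps'' $x \coloneqq v-a$ and $y \coloneqq b-v$. The hypothesis $v-n<a<v<b<v+n$ says precisely that $x,y \in \{1,\dots,n-1\}$, and since $b-a = x+y$, the condition $a+n<b$ is equivalent to $x+y>n$. So the whole lemma reduces to deciding when $x+y>n$ in terms of the residues $\widetilde a,\widetilde v,\widetilde b$.

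Next I would recover each gap from the residues. Because $0<x<n$, the integer $x$ is the unique representative of $v-a \bmod n$ lying in $\{1,\dots,n-1\}$; concretely $x = \widetilde v-\widetilde a$ when $\widetilde a<\widetilde v$ and $x=\widetilde v-\widetilde a+n$ when $\widetilde a>\widetilde v$ (the two residues are never equal, as $0<x<n$). The identical reasoning gives $y=\widetilde b-\widetilde v$ when $\widetilde v<\widetilde b$ and $y=\widetilde b-\widetilde v+n$ when $\widetilde v>\widetilde b$.

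I would then run the four cases according to the positions of $\widetilde a$ and $\widetilde b$ relative to $\widetilde v$. When $\widetilde a<\widetilde v<\widetilde b$ (no corrections) one gets $x+y=\widetilde b-\widetilde a\le n-1$, so $x+y>n$ fails. When $\widetilde b<\widetilde v<\widetilde a$ (both corrections) one gets $x+y=\widetilde b-\widetilde a+2n>n$, so it always holds, yielding the pattern $\widetilde b<\widetilde v<\widetilde a$. In the two mixed cases (exactly one correction) one gets $x+y=\widetilde b-\widetilde a+n$, so $x+y>n$ is equivalent to $\widetilde a<\widetilde b$; combining this with the case hypotheses $\widetilde a<\widetilde v,\ \widetilde b<\widetilde v$ and $\widetilde v<\widetilde a,\ \widetilde v<\widetilde b$ respectively produces exactly the chains $\widetilde a<\widetilde b<\widetilde v$ and $\widetilde v<\widetilde a<\widetilde b$. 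Collecting the three cases in which $x+y>n$ gives precisely the three displayed inequalities, which is the claimed equivalence.

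There is no serious obstacle here beyond careful bookkeeping; the one point deserving a sanity check is the degenerate situation $b-a=n$, i.e.\ $\widetilde a=\widetilde b$, where $a+n<b$ is false and all three displayed chains fail their strict inequalities, so the two sides agree automatically (this occurs only in the mixed cases, and is ruled out there by the strict inequality $\widetilde a<\widetilde b$). It is worth recording the conceptual content: the three admissible chains are the cyclic rotations of $\widetilde a<\widetilde b<\widetilde v$, so the lemma says that $a+n<b$ holds exactly when $\widetilde a,\widetilde b,\widetilde v$ occur in this clockwise cyclic order around $\{1,\dots,n\}$, which is the geometric fact that the later sections invoke.
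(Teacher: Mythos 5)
Your proof is correct, and all four cases check out (including the degenerate case $\widetilde a=\widetilde b$, which you rightly flag). The paper disposes of this lemma differently and much more tersely: it observes that both sides of the equivalence are ``obviously equivalent to''
\[
v-n < a < b-n < v < a+n < b < v+n,
\]
i.e.\ it interleaves the hypothesis $v-n<a<v<b<v+n$ with the condition $a+n<b$ into a single chain, and asserts that the three cyclic orderings of the residues are exactly the condition for this chain to hold. That one-line argument buys brevity but silently leaves to the reader precisely the bookkeeping you carry out: recovering the residues $\widetilde a,\widetilde v,\widetilde b$ from the positions of $a$, $b-n$, and $v$ inside a window of length $n$. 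Your reformulation via the gaps $x=v-a$, $y=b-v$ and the criterion $x+y>n$ makes that step explicit and verifiable case by case, and your closing remark---that the three chains are the cyclic rotations of $\widetilde a<\widetilde b<\widetilde v$, so the lemma detects clockwise cyclic order---is exactly the geometric content the paper invokes later (e.g.\ in Proposition~\ref{incClock}). In short: same underlying fact, but your route is the fully expanded, elementary version of the paper's compressed one.
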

\begin{proof}
    Both statements are obviously equivalent to
    \[
        v-n < a < b-n < v < a+n < b < v+n.\qedhere
    \]
\end{proof}

The following proposition will be used to connect cyclic factorizations with distinguished subwords in~\Cref{sec:bijection}.

\begin{proposition}
\label{incClock}
    Suppose that $\rr \in \EFt_n$. By~\Cref{cor:zero}, we can write
    \[
        (\rrr_1\cdots \rrr_{j-1})\rrr_j(\rrr_{j-1}\cdots \rrr_1) = (\!(0,m_j)\!),
    \]
    for each $j = 1,\dots,2n-2$.  Then $\rr$ is cyclic if and only if
    \[
        m_1 < \cdots < m_{2n-2}.
    \]
\end{proposition}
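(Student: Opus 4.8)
The plan is to first convert the conjugations into an explicit formula for $m_j$, then reduce the global monotonicity to local comparisons of consecutive terms, and finally match those comparisons to the two cyclic-ordering conditions of \Cref{def:cyclic_factorizations} by means of \Cref{clockIneq}.

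\emph{Reformulating $m_j$.} Write $w_j \coloneqq \rrr_1\cdots\rrr_j$ (with $w_0$ the identity) and, following \Cref{cor:zero}, $\rrr_\ell = (\!(a_{\ell-1},a_\ell)\!)$ with $a_0<\cdots<a_{2n-2}$ and $a_0\equiv 0 \bmod n$. A one-line induction gives $w_{j-1}(a_{j-1})=a_0$ for every $j$, since $w_j(a_j)=w_{j-1}\rrr_j(a_j)=w_{j-1}(a_{j-1})$. As conjugation sends $(\!(x,y)\!)$ to $(\!(w(x),w(y))\!)$, this yields $(\!(0,m_j)\!)=(\!(a_0,w_{j-1}(a_j))\!)$, so that $m_j=w_{j-1}(a_j)-a_0$; equivalently, because $\rrr_j=w_{j-1}^{-1}w_j$, the conjugated reflections telescope to the ``star'' factorization $\lambda_n=(\!(0,m_{2n-2})\!)\cdots(\!(0,m_1)\!)$ through the point $0$. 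Monotonicity of $(m_j)$ is therefore the same as monotonicity of $\big(w_{j-1}(a_j)\big)_j$, and it suffices to treat each consecutive inequality $m_j<m_{j+1}$.

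\emph{Reducing to the walk.} Using the tree-walk description of \Cref{thm:tree_like}, I would read $v_\ell \coloneqq a_\ell \bmod n$ as the vertices of the clockwise walk, so that $m_j<m_{j+1}$ records the passage through the shared vertex $v_j$, entered from $v_{j-1}$ and exited to $v_{j+1}$. If the walk backtracks at $v_j$ (that is, $v_{j+1}=v_{j-1}$, so $v_j$ is a leaf and $a_{j+1}=a_{j-1}+n$), the identity $w(x+n)=w(x)+n$ forces $m_{j+1}=w_{j-1}(a_j+n)-a_0=m_j+n>m_j$, so these steps are automatically increasing. For a non-backtracking step $\rrr_j$ fixes $a_{j+1}$, and $m_j<m_{j+1}$ collapses to the single question of whether the prefix permutation $w_{j-1}$ preserves the order of the adjacent pair $a_j<a_{j+1}$, equivalently whether $(a_j,a_{j+1})$ is a non-inversion of $w_{j-1}$.

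\emph{Matching to cyclicity, and the main obstacle.} The residues of the entering neighbor $v_{j-1}$, the current vertex $v_j$, and the exiting neighbor $v_{j+1}$ are exactly the data $\tilde a,\tilde v,\tilde b$ of \Cref{clockIneq} (with $v=a_j$, $a=a_{j-1}$, $b=a_{j+1}$, which satisfy $a_j-n<a_{j-1}<a_j<a_{j+1}<a_j+n$), so \Cref{clockIneq} is the device that reads off the clockwise cyclic position of $v_{j+1}$ relative to $v_{j-1}$ about $v_j$ from an integer inequality among the lifts; in particular it distinguishes a genuine ``descent'' in the neighbor order from the single permitted cyclic wrap. The plan is to show that the sign of $m_{j+1}-m_j$ at a non-backtracking step is governed precisely by this cyclic position, whence demanding $m_j<m_{j+1}$ at every step is equivalent to requiring that at each vertex the walk always exits to the clockwise-next neighbor in increasing cyclic order---i.e.\ that each neighbor list $\rrk$ obey \ref{CF3} (at $k=n$) or \ref{CF2} (at $k\neq n$). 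The main obstacle is exactly the order comparison $w_{j-1}(a_j)<w_{j-1}(a_{j+1})$: since $w_{j-1}$ encodes the entire traversed prefix, the comparison is a priori nonlocal (a completed out-of-order excursion can ``raise'' a residue and invert the pair, as one sees by direct computation). To control it I would track how $w_{j-1}$ permutes the length-$2n$ window around $a_j$, using the non-crossing/nesting of the reflection pairs from \Cref{cor:pairs} and \Cref{nest} to show this window is permuted in a sorted fashion; maintaining this as an inductive invariant along the walk is the technical heart of the argument, after which \Cref{clockIneq} finishes both directions.
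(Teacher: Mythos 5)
Your setup is sound and in fact reproduces the skeleton of the paper's argument: the identity $w_{j-1}(a_{j-1})=a_0$, hence $m_j=w_{j-1}(a_j)-a_0$, the reduction of global monotonicity to the consecutive comparisons $m_j<m_{j+1}$, and the backtracking case $a_{j+1}=a_{j-1}+n$ (which is the paper's case (d)) are all correct. But there is a genuine gap exactly where you place the ``technical heart'': you reduce the non-backtracking step to deciding whether $w_{j-1}$ preserves the order of the pair $(a_j,a_{j+1})$, and then defer this to an unformulated ``sorted window'' invariant to be maintained along the walk. That invariant cannot be an unconditional statement about tree-like factorizations---if $w_{j-1}$ always permuted the window in a sorted fashion, every tree-like factorization would have increasing $m_j$'s and hence be cyclic, which is false---so the invariant would have to encode the cyclic hypothesis in one direction and be recovered from the $m$-monotonicity in the other, and nothing in the proposal says how. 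As written, both directions of the equivalence rest on this missing step.

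The idea that closes the gap in the paper's proof is a closed-form evaluation of $m_j$ coming from \Cref{cor:pairs}, which makes the comparison completely local. Every $\rrr_j$ is either the \emph{left end} of its pair (the first reflection to use its residue class) or the \emph{right end} (the last). If $\rrr_j=(\!(a_{j-1},a_j)\!)$ is a left end, no earlier reflection touches the class of $a_j$, so $w_{j-1}(a_j)=a_j$ and $m_j=a_j-a_0$. If $\rrr_j$ is a right end, no later reflection touches the class of $a_{j-1}$, so writing $w_{j-1}=\lambda_n\rrr_{2n-2}\cdots\rrr_j$ one gets $w_{j-1}(a_j)=\lambda_n(a_{j-1})=a_{j-1}+n$, i.e.\ $m_j=a_{j-1}+n-a_0$. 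With these two formulas in hand, three of the four adjacency patterns (left--left, right--right, left--right) give $m_j<m_{j+1}$ automatically, and the single remaining pattern (right end followed by left end) becomes the inequality $a_{j-1}+n<a_{j+1}$, which \Cref{clockIneq} translates into precisely the local condition of \crefitem{def:cyclic_factorizations}{CF2} and \crefitem{def:cyclic_factorizations}{CF3}; the converse direction then uses \Cref{nest} to string the local conditions into the stated form of those axioms. So the pair structure you cite from \Cref{cor:pairs} is not merely an auxiliary tool for a window argument---used as first-use/last-use information, it \emph{is} the computation of $w_{j-1}$ on the two relevant values, and supplying it is what your proposal still needs.
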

\begin{proof}
   Suppose that $\rr$ is a cyclic factorization, and fix $1 \leq j < 2n-2$. Write $\rrr_j = (\!(a , v)\!)$ and $\rrr_{j + 1} = (\!( v,b)\!)$ with $a < v < b$. By adding a multiple of $n$ if necessary, we can assume that $\rrr_1\cdots \rrr_{j-1}(a) = 0$. There are four cases to consider\footnote{These four cases correspond to the four cases in~\Cref{fig:runleaflabels} and in~\Cref{def:run_leaves}.}, all of which will be handled using \cref{nest}:
    \begin{enumerate}[label=(\alph*)]
        \item Suppose $\rrr_j$ and $\rrr_{j + 1}$ are both the left ends of their pairs. Then $m_j = v < b = m_{j + 1}$.
        \item Suppose $\rrr_{j}$ is the right end of its pair and $\rrr_{j + 1}$ is the left end of its pair. Then $m_j = a + n$ and $m_{\ell+1} = b$. By \cref{connection} we also know that $v-n < a < v < b < v + n$. It then follows from \crefitem{def:cyclic_factorizations}{CF2} and \cref{clockIneq} that $m_j < m_{j + 1}$.
        \item Suppose $\rrr_j$ and $\rrr_{j + 1}$ are both the right ends of their pairs. Then $m_j = a + n < v + n = m_{j + 1}$.
        \item Suppose $\rrr_{j}$ is the left end of its pair and $\rrr_{j + 1}$ is the right end of its pair. Then these must be the same pair, so $a = b-n$, and $m_j = v < v + n = m_{j + 1}$.
    \end{enumerate}
\medskip

    Suppose now that we have a tree-like factorization with $m_1 < \cdots < m_{2n-2}$. We begin by considering \crefitem{def:cyclic_factorizations}{CF3}.  It follows from \cref{nest} that we can write $\rrr_1,\dots,\rrr_{2n-2}$ as
    \[
        (\!(\bar{n}, v_1)\!),\dots,(\!(v_1,n)\!),(\!(\bar{n},v_2)\!),\dots,(\!(v_2,n)\!),\dots,(\!(\bar{n},v_\ell)\!),\dots,(\!(v_\ell,n)\!).
    \]
    Consider the adjacent factors $\rrr_{j} = (\!(v_i, n)\!)$ and $\rrr_{j+1} = (\!(\bar{n},v_{i+1})\!)$. We have $v_{i+1} - v_i = m_{j+1} - m_j > 0$, so $v_i < v_{i+1}$.

    Now consider \crefitem{def:cyclic_factorizations}{CF2}. It follows from \cref{nest} that we can write $\rrr_1,\dots,\rrr_{2n-2}$ as
    \[
        \dots, (\!(\bar{a}_\ell, k)\!), (\!(k,a_1)\!),\dots,(\!(\bar{a}_1,k)\!),\dots,(\!(k,a_{\ell-1})\!), \dots,(\!(\bar{a}_{\ell-1},k)\!),(\!(k,a_\ell)\!),\dots,
    \]
    where each $a_i = v_i \mod n$. Consider the adjacent factors $\rrr_{j} = (\!(\bar{a}_i, k)\!)$ and $\rrr_{j+1} = (\!(k,a_{i+1})\!)$ for $1\leq i \leq \ell - 2$. We have $a_{i+1} - a_i = m_{j+1} - m_j > 0$, so $a_i < a_{i+1}$. We also have $k-n < a_{i} - n < k < a_{i+1} < k + n$, so by \cref{clockIneq} either $v_i < v_{i+1} < k$, $k < v_i < v_{i+1}$, or $v_{i+1} < k < v_i$.  \crefitem{def:cyclic_factorizations}{CF2} follows, so that $\rr$ is cyclic.
\end{proof}

\subsection{Cyclic embeddings}
\label{sec:cyclic_from_trees}

\begin{definition}
We write $\E_n$ for the set of vertex-labeled trees (as abstract graphs).
\end{definition}
 For each vertex-labeled tree, we will now specify a preferred \emph{cyclic} embedding in the plane.  (We note that there is some similarity with~\cite[Section 3]{goulden2002tree}.)

\begin{definition}
\label{def:cyclic_trees}
Given a vertex-labeled tree $T \in \T_n$, its \defn{cyclic} embedding is given as follows: draw $T$ so that for every vertex $i \in [n]$ its neighboring vertices $j$ increase clockwise---with the exception that for $i\neq n$, $i$'s neighbor on the unique path from the vertex $n$ to $i$ is read as the central label $i$.  The marked edge is the edge from $n$ to its smallest neighbor.
\end{definition}

To make the clockwise increasing condition easy to see in examples, we direct each edge in $T$ towards the vertex $n$.

\begin{example}
All 16 trees in $\T_4$ are drawn in~\Cref{fig:trees} in their cyclic embedding; a larger example is given in~\Cref{fig:main}.
\end{example}

\input{tree_figure.tex}

By construction, \Cref{thm:tree_like} restricts from all tree-like factorizations and all embeddings to cyclic factorizations and embeddings.

\begin{theorem}
\label{thm:cyclic}
  For any cyclically-embedded vertex-labeled tree $T \in \E_n$, $\rrT$ is a cyclic factorization.  For any cyclic factorization $\rr\in\Ft_n$, $\Trr$ is a cyclically-embedded vertex-labeled tree.  The maps $T \mapsto \rrT$ and $\rr \mapsto \Trr$ are mutually inverse bijections between $\E_n$ and $\Ft_n$.
\end{theorem}

\begin{remark}
At this point we could use the known enumeration of $\E_n$ to conclude that $|\Ft_n|=n^{n-2}$.  We will instead connect $\Ft_n$ to certain maximal distinguished subwords in~\Cref{sec:subwords}, connect these subwords to certain braid varieties in~\Cref{sec:braid_varieties}, use representation-theoretic methods to compute the point count of the braid varieties over a finite field with $q$ elements, and then recover the cardinality of $\Ft_n$ by sending $q \to 1$.
\end{remark}

\begin{remark}
In analogy with the usual problem of minimal reflection factorizations of the long cycle in $S_n$ and the noncrossing partition lattice, it seems natural to define a partial order on the prefixes of cyclic factorizations in $\AS_n$.  Unfortunately, for $n\geq 4$ there are maximal chains in this partial order that no longer correspond to cyclic factorizations.
\end{remark}

\section{Subwords}
\label{sec:subwords}

A \defn{subword} $\u$ of a sequence $[s_{i_1},s_{i_2},\ldots, s_{i_m}]$ of simple generators of the affine symmetric group $\AS_n$ (see~\Cref{sec:affine_symmetric} for more details) is a sequence \[\mathsf{u}=[u_1,u_2,\dots,u_m], \text{ where } u_j\in\{s_{i_j},e\} \text{ for all } j.\]  We call the letters $j$ for which $u_j=e$, \defn{skips}, and the letters $j$ for which $u_j = s_{i_j}$ \defn{takes}.  For any such sequence, we set
\begin{align}\label{eq:partial_prod}
u_{(j)} &\coloneqq u_1u_2\cdots u_j \in \AS_n, \text{ and}\\
u^{(j)} &\coloneqq u_j\cdots u_{m} \in \AS_n.\nonumber
\end{align}

We say $\u$ is a \defn{$w$-subword} if $u_{(m)}=w$. 

\subsection{Maximal distinguished subwords}

\begin{definition}
\label{def:subwords}
Write $\S_n$ for the set of \defn{maximal distinguished subwords} of the word \[\blambda_n\coloneqq[s_0,s_1,\ldots,s_{n-1}]^{n-1}.\] That is, $\S_n$ is the set of subwords with $2n-2$ skips whose product is the identity.
\end{definition}
  The $n$ consecutive factors of length $n-1$ of the word $\blambda_n$---from the $i(n-1)$st letter to the $((i+1)(n-1)-1)$st letter---will be called \defn{rows}.  Drawing $\blambda_n$ with subsequence rows vertically aligned gives the notion of \defn{columns}.  We will typically depicting $\blambda_n$ or a subword $\u \in \S_n$ using an $n \times (n-1)$ array.

We will show in~\Cref{cor:distinguished} that for this special case of $\blambda_n$,~\Cref{def:subwords} recovers the usual notion of \emph{distinguished}~\cite{deodhar1985some}.

\begin{example}
The $16$ maximal distinguished subwords in $\S_4$ are given in~\Cref{fig:distinguished}.  A larger example is given in~\Cref{fig:main}.  See also~\Cref{fig:invs}.
\end{example}

In preparation to connect subwords to trees, we associate a reflection to each skip in a subword in $\S_n$.

\begin{definition}\label{def:inv}
For $\u \in \S_n$, define \begin{equation}\label{eq:inv}\Inv(\u) \coloneqq \big[r_1,r_2,\ldots,r_{n(n-1)}\big]\end{equation}
where $r_k=u_{(i_{k-1})}s_{i_k}u_{(i_{k-1})}^{-1}$ (the notation $u_{(i)}$ is defined in~\Cref{eq:partial_prod}).  We write $\rru$ for the subsequence of $\Inv(\u)$ obtained by restricting to the \emph{skips} of $\u$---that is, restricted to the indices $j$ for which $u_j = e$---and call the subsequence \defn{skip reflections}.
\end{definition}

\begin{remark}
We will show in~\Cref{sec:bijection} that $\S_n$ is in bijection with $\E_n$---the skip reflections will determine the edges of the corresponding tree.
\end{remark}

\begin{example}
\Cref{fig:invs} illustrates $\Inv(\u)$ for the maximal distinguished subword from~\Cref{fig:main}.
\end{example}

\begin{figure}[htbp]
\[\begin{array}{|c|c|c|c|c|c|c|c|c|} \hline (\!(\overline{\ten}1)\!)& \cd (\!(\overline{\ten}2)\!)&  (\!(23)\!)&  (\!(24)\!)&  (\!(25)\!)&  (\!(26)\!)&  (\!(27)\!)&  (\!(28)\!)&  (\!(29)\!)
\\ \hline (\!(\overline{2}1)\!)& \cd (\!(2\ten)\!)&  (\!(\overline{\ten}3)\!)&  (\!(\overline{\ten}4)\!)& \cd (\!(\overline{\ten}5)\!)&  (\!(56)\!)& \cd (\!(57)\!)&  (\!(78)\!)&  (\!(79)\!)
\\ \hline (\!(\overline{7}1)\!)&  (\!(27)\!)&  (\!(\overline{7}3)\!)&  (\!(\overline{7}4)\!)&  (\!(7\ten)\!)&  (\!(\overline{7}6)\!)& \cd (\!(\overline{7}5)\!)&  (\!(58)\!)&  (\!(59)\!)
\\ \hline\cd (\!(\overline{5}1)\!)&  (\!(\overline{2}1)\!)& \cd (\!(13)\!)&  (\!(34)\!)&  (\!(\overline{\ten}3)\!)&  (\!(36)\!)&  (\!(\overline{7}3)\!)&  (\!(38)\!)&  (\!(39)\!)
\\ \hline (\!(35)\!)&  (\!(23)\!)& \cd (\!(\overline{3}1)\!)& \cd (\!(14)\!)&  (\!(\overline{\ten}4)\!)&  (\!(46)\!)&  (\!(\overline{7}4)\!)&  (\!(48)\!)&  (\!(49)\!)
\\ \hline (\!(45)\!)&  (\!(24)\!)&  (\!(34)\!)& \cd (\!(\overline{4}1)\!)&  (\!(\overline{\ten}1)\!)&  (\!(16)\!)&  (\!(\overline{7}1)\!)& \cd (\!(18)\!)&  (\!(89)\!)
\\ \hline (\!(58)\!)&  (\!(28)\!)&  (\!(38)\!)&  (\!(48)\!)&  (\!(8\ten)\!)& \cd (\!(\overline{8}6)\!)&  (\!(\overline{7}6)\!)&  (\!(16)\!)&  (\!(69)\!)
\\ \hline (\!(56)\!)&  (\!(26)\!)&  (\!(36)\!)&  (\!(46)\!)&  (\!(6\ten)\!)& \cd (\!(68)\!)&  (\!(78)\!)& \cd (\!(\overline{8}1)\!)&  (\!(19)\!)
\\ \hline\cd (\!(15)\!)&  (\!(25)\!)&  (\!(35)\!)&  (\!(45)\!)& \cd (\!(5\ten)\!)&  (\!(6\ten)\!)&  (\!(7\ten)\!)&  (\!(8\ten)\!)& \cd (\!(\overline{\ten}9)\!)
\\ \hline (\!(19)\!)&  (\!(29)\!)&  (\!(39)\!)&  (\!(49)\!)&  (\!(59)\!)&  (\!(69)\!)&  (\!(79)\!)&  (\!(89)\!)& \cd (\!(9\ten)\!)\\ \hline \end{array}\]
\caption{$\Inv(\u)$ for the maximal distinguished subword from~\Cref{fig:main}.  Skips are colored purple.}
\label{fig:invs}
\end{figure}

\newcolumntype{C}[1]{>{\centering\arraybackslash$}p{#1}<{$}}
\begin{figure}[htbp]
\scalebox{0.8}{
$\begin{array}{ccc}
\begin{array}{|C{2em}|C{2em}|C{2em}|}\hline \cd (\!(\overline{4}1)\!) & \cc s_1 & (\!(13)\!) \\\hline \cc s_3 & \cd (\!(\overline{3}2)\!) & \cc s_1 \\\hline \cc s_2 & (\!(23)\!) & \cd (\!(\overline{3}1)\!) \\\hline (\!(14)\!) & \cc s_2 & \cc s_3 \\\hline \end{array} &
\begin{array}{|C{2em}|C{2em}|C{2em}|}\hline \cc s_0 & \cc s_1 & \cd (\!(\overline{4}3)\!) \\\hline \cc s_3 & \cd (\!(\overline{3}2)\!) & \cc s_1 \\\hline \cd (\!(\overline{2}1)\!) & \cc s_3 & \cc s_0 \\\hline (\!(12)\!) & (\!(23)\!) & (\!(34)\!) \\\hline \end{array} &
\begin{array}{|C{2em}|C{2em}|C{2em}|}\hline \cc s_0 & \cd (\!(\overline{4}2)\!) & (\!(23)\!) \\\hline \cd (\!(\overline{3}1)\!) & \cc s_0 & \cc s_1 \\\hline (\!(13)\!) & \cc s_3 & \cd (\!(\overline{3}2)\!) \\\hline \cc s_1 & (\!(24)\!) & \cc s_3
\\\hline \end{array}
  \\[3em]
\begin{array}{|C{2em}|C{2em}|C{2em}|}\hline \cc s_0 & \cc s_1 & \cd (\!(\overline{4}3)\!) \\\hline \cd (\!(\overline{3}1)\!) & (\!(12)\!) & \cc s_1 \\\hline \cc s_2 & \cd (\!(\overline{2}1)\!) & \cc s_0 \\\hline (\!(13)\!) & \cc s_2 & (\!(34)\!)  \\\hline \end{array} &
\begin{array}{|C{2em}|C{2em}|C{2em}|}\hline \cc s_0 & \cd (\!(\overline{4}2)\!) & \cc s_2 \\\hline \cd (\!(\overline{2}1)\!) & \cc s_0 & (\!(13)\!) \\\hline \cc s_2 & \cc s_3 & \cd (\!(\overline{3}1)\!) \\\hline (\!(12)\!) & (\!(24)\!) & \cc s_3 \\\hline \end{array} &
\begin{array}{|C{2em}|C{2em}|C{2em}|}\hline \cd (\!(\overline{4}1)\!) & (\!(12)\!) & (\!(23)\!) \\\hline \cc s_3 & \cc s_0 & \cd (\!(\overline{3}2)\!) \\\hline \cc s_2 & \cd (\!(\overline{2}1)\!) & \cc s_0 \\\hline (\!(14)\!) & \cc s_2 & \cc s_3 \\\hline \end{array}
\\[3em] \hline \\[-.5em]
\begin{array}{|C{2em}|C{2em}|C{2em}|}\hline \cd (\!(\overline{4}1)\!) & \cc s_1 & \cc s_2 \\\hline (\!(14)\!) & \cd (\!(\overline{4}2)\!) & (\!(23)\!) \\\hline \cc s_2 & \cc s_3 & \cd (\!(\overline{3}2)\!) \\\hline \cc s_1 & (\!(24)\!) & \cc s_3 \\\hline \end{array} &
\begin{array}{|C{2em}|C{2em}|C{2em}|}\hline \cc s_0 & \cd (\!(\overline{4}2)\!) & \cc s_2 \\\hline \cc s_3 & (\!(24)\!) & \cd (\!(\overline{4}3)\!) \\\hline \cd (\!(\overline{3}1)\!) & \cc s_3 & \cc s_0 \\\hline (\!(13)\!) & \cc s_2 & (\!(34)\!) \\\hline \end{array} &
\begin{array}{|C{2em}|C{2em}|C{2em}|}\hline \cd (\!(\overline{4}1)\!) & \cc s_1 & \cc s_2 \\\hline (\!(14)\!) & \cc s_0 & \cd (\!(\overline{4}3)\!) \\\hline \cc s_2 & \cd (\!(\overline{3}2)\!) & \cc s_0 \\\hline \cc s_1 & (\!(23)\!) & (\!(34)\!) \\\hline \end{array}
\\[3em] \hline \\[-.5em]
\begin{array}{|C{2em}|C{2em}|C{2em}|}\hline \cd (\!(\overline{4}1)\!)& (\!(12)\!) & \cc s_2 \\\hline \cc s_3 & \cd (\!(\overline{2}1)\!) & \cc s_1 \\\hline (\!(14)\!) & \cc s_3 & \cd (\!(\overline{4}3)\!) \\\hline \cc s_1 & \cc s_2 & (\!(34)\!) \\\hline \end{array} &
\begin{array}{|C{2em}|C{2em}|C{2em}|}\hline \cd (\!(\overline{4}1)\!) & \cc s_1 & (\!(13)\!) \\\hline \cc s_3 & \cc s_0 & \cd (\!(\overline{3}1)\!) \\\hline (\!(14)\!) & \cd (\!(\overline{4}2)\!) & \cc s_0 \\\hline \cc s_1 & (\!(24)\!) & \cc s_3 \\\hline \end{array} &
\begin{array}{|C{2em}|C{2em}|C{2em}|}\hline \cc s_0 & \cd (\!(\overline{4}2)\!) & \cc s_2 \\\hline \cd (\!(\overline{2}1)\!) & \cc s_0 & \cc s_1 \\\hline (\!(12)\!) & (\!(24)\!) & \cd (\!(\overline{4}3)\!) \\\hline \cc s_1 & \cc s_2 & (\!(34)\!) \\\hline \end{array}
\\[3em] \hline \\[-.5em]
\begin{array}{|C{2em}|C{2em}|C{2em}|}\hline \cd (\!(\overline{4}1)\!) & (\!(12)\!) & \cc s_2 \\\hline \cc s_3 & \cd (\!(\overline{2}1)\!) & (\!(13)\!) \\\hline  \cc s_2 & \cc s_3 & \cd (\!(\overline{3}1)\!) \\\hline (\!(14)\!) & \cc s_2 & \cc s_3 \\\hline \end{array}&
\begin{array}{|C{2em}|C{2em}|C{2em}|}\hline \cc s_0 & \cd (\!(\overline{4}2)\!) & (\!(23)\!) \\\hline \cc s_3 & \cc s_0 & \cd (\!(\overline{3}2)\!) \\\hline \cd (\!(\overline{2}1)\!) & \cc s_3 & \cc s_0 \\\hline (\!(12)\!) & (\!(24)\!) & \cc s_3 \\\hline \end{array} &
\begin{array}{|C{2em}|C{2em}|C{2em}|}\hline \cc s_0 & \cc s_1 & \cd (\!(\overline{4}3)\!) \\\hline \cd (\!(\overline{3}1)\!)& \cc s_0 & \cc s_1 \\\hline (\!(13)\!) & \cd (\!(3\overline{2})\!) & \cc s_0 \\\hline \cc s_1 & (\!(23)\!) & (\!(34)\!) \\\hline \end{array}
\\[3em] \hline \\[-.5em]
& \begin{array}{|C{2em}|C{2em}|C{2em}|}\hline \cd (\!(\overline{4}1)\!) & \cc s_1 & \cc s_2 \\\hline (\!(14)\!)& \cd (\!(\overline{4}2)\!) & \cc s_1 \\\hline \cc s_2 & (\!(24)\!) & \cd (\!(\overline{4}3)\!) \\\hline \cc s_1 & \cc s_2 & (\!(34)\!) \\\hline \end{array} &\end{array}$}
\caption{The $16$ distinguished subwords in $\S_4$, with letters chosen in the subword indicated in green, positive skips in white, and negative skips in purple (and replaced by the corresponding inversions).  Compare with~\Cref{fig:trees}.}
\label{fig:distinguished}
\end{figure}

For a subword $\u = [u_1,\dots, u_{n(n-1)}]$ of $\blambda_n$, we encode the pattern of skips in an \defn{indicator word} $\psi(\u) = [\psi_1, \dots, \psi_{n(n-1)}]$, where
\[
    \psi_i = \begin{cases}
        0 & \text{if } u_i = e, \\
        1 & \text{otherwise}.
    \end{cases}
\]

We define the \defn{rotation} of a subword, denoted $\operatorname{rot}(\mathsf{u})$ as the subword with indicator word 
\[
    \left[\psi(\u)_{n(n-1)}, \psi(\u)_1, \dots, \psi(\u)_{n(n-1) - 1}\right].
\]
Explicitly, we can write $\operatorname{rot}(\u)_i$ in terms of how it acts on the integers via
\[
    \operatorname{rot}(\u)_i = \alpha_+ u_{i - 1} \alpha_-,
\]
where $\alpha_+(m)= m+1$ and $\alpha_- (m)= m-1$. 

\begin{lemma}
\label{rotOfEword}
    The rotation of an $e$-subword $\u$ of $\blambda_n$ is an $e$-subword.
\end{lemma}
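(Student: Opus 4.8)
The plan is to exploit the explicit description $\operatorname{rot}(\u)_i = \alpha_+ u_{i-1}\alpha_-$ (with the cyclic convention $u_0 = u_{n(n-1)}$) together with the fact that $\alpha_+$ and $\alpha_-$ are mutually inverse bijections of $\mathbb{Z}$. First I would record the two basic facts about conjugation by $\alpha_+$: since $\alpha_+(m)=m+1$ sends the reflection $s_j=(\!(j,j+1)\!)$ to $(\!(j+1,j+2)\!)=s_{j+1}$ (indices read modulo $n$), we have $\alpha_+ s_j \alpha_- = s_{j+1}$, and of course $\alpha_+ e \alpha_- = e$. Writing $m = n(n-1)$ and recalling that the letter in position $i$ of $\blambda_n$ is $s_{(i-1)\bmod n}$, this shows that $\operatorname{rot}(\u)_i$ is either $e$ (when $u_{i-1}=e$) or exactly the simple reflection in position $i$ of $\blambda_n$ (when $u_{i-1}$ is a take). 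Hence $\operatorname{rot}(\u)$ is a genuine subword of $\blambda_n$ whose indicator word is the promised cyclic shift $[\psi_m,\psi_1,\ldots,\psi_{m-1}]$, which also reconciles the two descriptions of $\operatorname{rot}$ given in the definition.

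The heart of the argument is a telescoping computation of the total product. I would write
\[
\operatorname{rot}(\u)_{(m)} = \prod_{i=1}^m \operatorname{rot}(\u)_i = (\alpha_+ u_0 \alpha_-)(\alpha_+ u_1 \alpha_-)\cdots(\alpha_+ u_{m-1}\alpha_-),
\]
and then observe that each interior pair $\alpha_-\alpha_+$ equals the identity, so the product collapses to $\alpha_+\,(u_0 u_1\cdots u_{m-1})\,\alpha_-$. Using the cyclic convention $u_0 = u_m$, this is $\alpha_+\,(u_m u_1\cdots u_{m-1})\,\alpha_-$.

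Finally I would invoke the cyclic invariance of the relation ``the product is the identity.'' Since $\u$ is an $e$-subword we have $u_1\cdots u_{m-1}u_m = e$, and therefore
\[
u_m u_1\cdots u_{m-1} = u_m\,(u_1\cdots u_{m-1}u_m)\,u_m^{-1} = u_m\, e\, u_m^{-1} = e.
\]
Consequently $\operatorname{rot}(\u)_{(m)} = \alpha_+\, e\, \alpha_- = e$, so $\operatorname{rot}(\u)$ is again an $e$-subword. The only step demanding any care is the bookkeeping at the cyclic boundary---making sure the wrapped-around letter $u_0 = u_m$ is precisely the one that produces the leading factor $u_m$ in the collapsed product---after which the conclusion is immediate; there is no genuine combinatorial or algebraic obstacle.
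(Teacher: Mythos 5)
Your proposal is correct and follows essentially the same argument as the paper: telescope the conjugations $\alpha_+ u_{i-1}\alpha_-$ so the product collapses to $\alpha_+\,(u_{n(n-1)}u_1\cdots u_{n(n-1)-1})\,\alpha_-$, then observe that this wrapped product is a conjugate of $u_1\cdots u_{n(n-1)}=e$ and hence equals $e$. The only addition is your preliminary check that $\operatorname{rot}(\u)$ really is a subword of $\blambda_n$ (via $\alpha_+ s_j \alpha_- = s_{j+1}$), which the paper leaves implicit in the definition of rotation.
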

\begin{proof}
    Write $u'_i \coloneqq \operatorname{rot}(\u)_i$. Then
    \begin{align*}
        u'_1\cdots u'_{n(n-1)} &= \alpha_+ u_{n(n-1)}u_1\cdots u_{n(n-1) - 1}\alpha_- \\
        &= \alpha_+ u_{n(n-1)}(u_1\cdots u_{n(n-1)})u_{n(n-1)}\alpha_- = e.\qedhere
    \end{align*}
\end{proof}

\begin{corollary}
    The rotation of a maximal distinguished subword of $\blambda_n$ is again a maximal distinguished subword.
\end{corollary}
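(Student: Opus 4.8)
The plan is to read off the two defining properties of a maximal distinguished subword from \Cref{def:subwords} and check that rotation preserves each of them. By definition, $\u \in \S_n$ means exactly that $\u$ is a subword of $\blambda_n$ with exactly $2n-2$ skips and with partial product $u_{(n(n-1))}=e$; in other words, $\u$ is an $e$-subword of $\blambda_n$ carrying precisely $2n-2$ skips. So it suffices to confirm, for $\rot(\u)$, that (i) it is still a subword of $\blambda_n$, (ii) it is an $e$-subword, and (iii) it still has $2n-2$ skips.

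Property (i) is automatic: $\rot(\u)$ is \emph{defined} to be the subword of $\blambda_n$ whose indicator word is the cyclic shift $[\psi(\u)_{n(n-1)},\psi(\u)_1,\dots,\psi(\u)_{n(n-1)-1}]$, and any $0/1$ indicator word specifies a subword (a take on each $1$, a skip on each $0$). Property (ii) is precisely \Cref{rotOfEword}, which does all the algebraic work: since $\u$ is an $e$-subword, so is $\rot(\u)$. Property (iii) is the only genuinely new observation, and it is immediate---a cyclic shift merely permutes the entries of the $0/1$ sequence $\psi(\u)$, so it preserves the number of $0$'s, i.e.\ the number of skips; hence $\rot(\u)$ again has $2n-2$ skips. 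Combining (i)--(iii), $\rot(\u)$ satisfies \Cref{def:subwords}, as desired.

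I do not expect a real obstacle here, since \Cref{rotOfEword} carries the substance. The one spot worth a sanity check is the behavior of the takes under the shift: using the explicit description $\rot(\u)_i=\alpha_+u_{i-1}\alpha_-$, a take $u_{i-1}=s_j$ becomes $\alpha_+ s_j\alpha_-=(\!(j+1,j+2)\!)=s_{j+1}$, which is exactly the letter of $\blambda_n$ sitting one position to the right, consistent with how the indicator word moves that take. In particular at the wrap-around the last letter $s_{n-1}$ maps to $(\!(n,n+1)\!)=(\!(0,1)\!)=s_0$ by the periodicity of affine reflections, so the rotated pattern closes up correctly; but this is already baked into the definition of $\rot$ and into \Cref{rotOfEword}, so it requires no further argument in the corollary itself.
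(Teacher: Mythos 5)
Your proof is correct and matches the paper's intended argument exactly: the corollary is stated there without proof precisely because it follows immediately from \Cref{rotOfEword} together with the trivial observation that a cyclic shift of the indicator word preserves the number of skips, which is what you spell out. Your sanity check that $\alpha_+ s_j \alpha_- = s_{j+1}$ (with $s_{n-1}\mapsto s_0$ at the wrap-around) is also accurate and confirms consistency with the letters of $\blambda_n$.
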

\begin{remark}
It might be interesting to determine the orbit structure of $\S_n$ under cyclic rotation.
\end{remark}

\subsection{Skip reflections are tree-like}
Our eventual goal is to show that if $\u \in \S_n$, then $\rru$ gives a cyclic factorization of $\lambda_n$.  We begin by showing that $\rru$ is tree-like (\Cref{def:treelike_factorizations}).

 \begin{proposition}
\label{invProd}
    For $\u$ a subword of $\blambda_n$, write $\rru = \left[\rrr_1,\dots,\rrr_{k}\right]$.  Let $i_j$ denote the index of the skip corresponding to $\rrr_j$. Then for any $i_j \leq \ell < i_{j+1}$,
    \[
        \rrr_1 \cdots \rrr_{j} = (\blambda_n)_{(\ell)} u_{(\ell)}^{-1}.
    \]
\end{proposition}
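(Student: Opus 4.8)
The plan is to prove the identity by a single induction on $\ell$, tracking the quantity
\[
    P_\ell \coloneqq (\blambda_n)_{(\ell)}\, u_{(\ell)}^{-1}
\]
as $\ell$ runs from $0$ to $n(n-1)$, and showing that $P_\ell$ is right-multiplied by a skip reflection exactly at skip positions and is otherwise unchanged. Writing $s_{i_\ell}$ for the letter of $\blambda_n$ in position $\ell$, so that $(\blambda_n)_{(\ell)} = s_{i_1}\cdots s_{i_\ell}$ and $r_\ell = u_{(\ell-1)}\, s_{i_\ell}\, u_{(\ell-1)}^{-1}$ by~\Cref{def:inv}, the base case $P_0 = e$ is immediate.

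For the inductive step I would compare $P_\ell$ with $P_{\ell-1}$ in the two cases governed by whether position $\ell$ is a take or a skip, using the fact that each $s_{i_\ell}$ is an involution. If $\ell$ is a take, then $u_{(\ell)} = u_{(\ell-1)} s_{i_\ell}$, so $u_{(\ell)}^{-1} = s_{i_\ell} u_{(\ell-1)}^{-1}$, and the trailing $s_{i_\ell}$ of $(\blambda_n)_{(\ell)}$ cancels against it, giving $P_\ell = P_{\ell-1}$. If $\ell$ is a skip, then $u_{(\ell)} = u_{(\ell-1)}$, and inserting $u_{(\ell-1)}^{-1} u_{(\ell-1)} = e$ just after $(\blambda_n)_{(\ell-1)}$ gives
\[
    P_\ell = (\blambda_n)_{(\ell-1)}\, s_{i_\ell}\, u_{(\ell-1)}^{-1} = \big((\blambda_n)_{(\ell-1)} u_{(\ell-1)}^{-1}\big)\big(u_{(\ell-1)} s_{i_\ell} u_{(\ell-1)}^{-1}\big) = P_{\ell-1}\, r_\ell,
\]
where $r_\ell$ is exactly the skip reflection recorded at position $\ell$.

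Combining the two cases, $P_\ell$ is constant across takes and is right-multiplied by the corresponding skip reflection at each skip, so after processing positions $1,\dots,\ell$ it equals the product, in order, of all skip reflections occurring at positions $\le \ell$. When $i_j \le \ell < i_{j+1}$ there are exactly $j$ such skips, namely those yielding $\rrr_1,\dots,\rrr_j$, whence $P_\ell = \rrr_1\cdots\rrr_j$, as claimed.

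I do not anticipate a genuine obstacle: once the take/skip dichotomy and the involution identity are in place the argument telescopes cleanly. The only point requiring care is the bookkeeping—keeping the partial products of the full word $\blambda_n$ and of the subword $\u$ consistently indexed, and confirming that the conjugating prefix in $r_\ell$ is $u_{(\ell-1)}$ rather than $u_{(\ell)}$, so that the cancellation in the skip case reproduces the reflection exactly as in~\Cref{def:inv}.
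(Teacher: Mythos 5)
Your proof is correct and is essentially the paper's argument made self-contained: the paper cites \cite[Proposition 4.7]{galashin2022rational} and sketches exactly the cancellation you formalize, namely that the conjugated skip reflections cancel the skipped letters so that $(\blambda_n)_{(\ell)}^{-1}\rrr_1\cdots\rrr_j = u_{(\ell)}^{-1}$. Your induction on $\ell$ with the take/skip dichotomy is the standard way to make that telescoping precise, so nothing further is needed.
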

\begin{proof}
    This is~\cite[Proposition 4.7]{galashin2022rational}.  The idea is to notice that \[(\blambda_n)_{(\ell)}^{-1}\rrr_1\cdots \rrr_{j} = \u_{(\ell)}^{-1}\] because the $\rrr_i$ cancel the corresponding skips from $(\blambda_n)_{(\ell)}^{-1}$.
\end{proof}

\begin{corollary}
    If $\u \in \S_n$, then $\rru \in \Fact(\lambda_n)$.
\end{corollary}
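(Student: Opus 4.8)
The plan is to read off the product of all skip reflections by a single application of \Cref{invProd} at the end of the word $\blambda_n$. Since $\u \in \S_n$ has exactly $2n-2$ skips, the sequence $\rru = [\rrr_1, \ldots, \rrr_{2n-2}]$ consists of exactly $2n-2$ reflections, which matches the reflection length of $\lambda_n$ recorded in \Cref{eq:cycle}. So it will suffice to verify that the product of these $2n-2$ reflections equals $\lambda_n$; this simultaneously shows that the factorization has minimal length.

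First I would take $j = 2n-2$ (the last skip) and let $\ell = n(n-1)$ run all the way to the end of $\blambda_n$ in \Cref{invProd}, which gives
\[
\rrr_1 \cdots \rrr_{2n-2} = (\blambda_n)_{(n(n-1))}\, u_{(n(n-1))}^{-1}.
\]
Next I would identify the two factors on the right. The partial product $(\blambda_n)_{(n(n-1))}$ is the product of all $n(n-1)$ letters of $\blambda_n = (s_0 s_1 \cdots s_{n-1})^{n-1}$, which is exactly $\lambda_n$ by the reduced-word description in \Cref{eq:cycle}. The partial product $u_{(n(n-1))}$ is the full product $u_{(m)}$ of the subword $\u$, which equals the identity $e$ because $\u$ is a maximal distinguished subword (\Cref{def:subwords}). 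Hence $\rrr_1 \cdots \rrr_{2n-2} = \lambda_n\, e^{-1} = \lambda_n$, and since this is a factorization of $\lambda_n$ into $2n-2$ reflections while the reflection length of $\lambda_n$ is $2n-2$, we conclude $\rru \in \Fact(\lambda_n)$.

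The only subtlety, and the main point to be careful about, is the boundary case in \Cref{invProd}: its formula is stated for $i_j \le \ell < i_{j+1}$, and there is no ``next skip index'' $i_{2n-1}$ beyond the last skip. I would check that the formula remains valid for the last skip with $\ell$ ranging up to $n(n-1)$. This is immediate from the proof of \Cref{invProd}, since the cancellation argument there only uses that no skips occur strictly between index $i_j$ and $\ell$, a condition that continues to hold for every $\ell$ from $i_{2n-2}$ to the end of the word.
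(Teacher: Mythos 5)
Your proof is correct and takes essentially the same approach as the paper, which states this corollary without proof precisely because it follows by applying \Cref{invProd} at the end of the word: $(\blambda_n)_{(n(n-1))}=\lambda_n$ by the reduced word in \Cref{eq:cycle}, $u_{(n(n-1))}=e$ by \Cref{def:subwords}, and the $2n-2$ skips match the reflection length of $\lambda_n$. Your explicit handling of the boundary case $\ell \leq n(n-1)$ in \Cref{invProd} is a careful touch, not a deviation from the intended argument.
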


We can understand how a subword $\u$ acts on $k$ by looking at $\Inv(\u)$.  The $j$th reflection $(\!(a,b)\!)$ in $\Inv(\u)$ with $a$ or $b$ equal to $k$ modulo $n$ will be either:
\begin{itemize}
  \item \defn{increases} ($u^{(j)}(k) > u^{(j+1)}(k)$), which occur along columns
  \item \defn{decreases} ($u^{(j)}(k) < u^{(j+1)}(k)$), which occur along rows
  \item skips ($u^{(j)}(k) = u^{(j+1)}(k)$), which appear as corners.
\end{itemize}


\begin{example}
It is helpful for understanding the subsequent proofs to interpret the various data on subwords in terms of cyclic trees (although the bijection will not be formally proven until~\Cref{sec:bijection}).

Consider the set of reflections $(\!(a,b)\!)$ in $\Inv(\u)$ with $a$ or $b$ equal to $k$ modulo $n$.  This set, when highlighted on $\blambda_n$ drawn using $n$ rows and $n-1$ columns, record what is seen as one goes clockwise around the vertex $k$ in the tree corresponding to $\rru$.  The set forms a connected path heading down and right on a torus (if the path goes below the $n$th row, the path continues in the same column in the first row; if the path goes to the right of the $(n-1)$st column, the path continues one row below in the first column):
\begin{itemize}
\item for $k\neq 0 \mod n$, the path starts down from the topmost box in column $k$; if $k=0 \mod n$, then then path starts right from the top left box;
\item skips of $\u$ correspond to neighbors of $k$; at skips, the path switches between going down and going right;
\item neighbors $a<k$ (resp. $a>k$) are recorded as corners $(\!(a,k)\!)$ and $(\!(\overline{k},a)\!)$ (resp. corners $(\!(\overline{a},k)\!)$ and $(\!(k,a)\!)$) in the same column of the strip;
\item the vertical distance between the corners $(\!(a,k)\!)$ and $(\!(\overline{k},a)\!)$ (or $(\!(\overline{a},k)\!)$ and $(\!(k,a)\!)$) is given by the number of vertices on the connected component containing $a$ of the tree without vertex $k$ (and the reflections that appear use those vertices and $k$ itself); and
\item the horizontal distance between clockwise adjacent neighbors of $k$ is given by the length of the \emph{run-leaf} between those neighbors (defined in~\Cref{def:run_leaves}).
\end{itemize}


\[\raisebox{-.5\height}{
\begin{tikzpicture}
\node[draw,circle,inner sep=0pt,thick,minimum width=1em] (2) at (-1,1) {$2$};
\draw[-,darkblue] (-1,1.2) to (-1,1.3);
\node[darkblue] (l2) at (-1,1.4) {\tiny $9$};
\node[draw,circle,inner sep=0pt,thick,minimum width=1em] (9) at (-1,-1) {$9$};
\draw[-,darkblue] (-1,-1.2) to (-1,-1.3);
\node[darkblue] (l9) at (-1,-1.4) {\tiny $9$};
\node[draw,circle,inner sep=0pt,thick,minimum width=1em,black,fill=darkred!20,text opacity=1] (10) at (-1,0) {$10$};
\draw[-,darkblue] (-1+.16,.16) to (-1+.21,.21);
\node[darkblue] (l0) at (-1+.28,.28) {\tiny $3$};
\draw[-,darkblue] (-1+.16,-.16) to (-1+.21,-.21);
\node[darkblue] (l0) at (-1+.28,-.28) {\tiny $4$};
\draw[-,darkblue] (-1-.16,.16) to (-1-.21,.21);
\node[darkblue] (l0) at (-1-.28,.28) {\tiny $2$};
\draw[-,darkblue] (-1-.16,-.16) to (-1-.21,-.21);
\node[darkblue] (l0) at (-1-.28,-.28) {\tiny $0$};
\node[draw,circle,inner sep=0pt,thick,minimum width=1em] (5) at (0,0) {$5$};
\draw[-,darkblue] (0,-.2) to (0,-.3);
\node[darkblue] (l5) at (0,-.4) {\tiny $4$};
\draw[-,darkblue] (.14,.14) to (.21,.21);
\node[darkblue] (l5) at (.28,.28) {\tiny $3$};
\draw[-,darkblue] (-.14,.14) to (-.21,.21);
\node[darkblue] (l5) at (-.28,.28) {\tiny $2$};
\node[draw,circle,inner sep=0pt,thick,minimum width=1em] (7) at (0,1) {$7$};
\draw[-,darkblue] (0,1.2) to (0,1.3);
\node[darkblue] (l7) at (0,1.4) {\tiny $9$};
\node[draw,circle,inner sep=0pt,thick,minimum width=1em] (1) at (1,0) {$1$};
\draw[-,darkblue] (1.14,.14) to (1.21,.21);
\node[darkblue] (l5) at (1.28,.28) {\tiny $1$};
\draw[-,darkblue] (1.14,-.14) to (1.21,-.21);
\node[darkblue] (l5) at (1.28,-.28) {\tiny $4$};
\draw[-,darkblue] (.86,.14) to (.79,.21);
\node[darkblue] (l5) at (.72,.28) {\tiny $2$};
\draw[-,darkblue] (.86,-.14) to (.79,-.21);
\node[darkblue] (l5) at (.72,-.28) {\tiny $2$};
\node[draw,circle,inner sep=0pt,thick,minimum width=1em] (3) at (1,1) {$3$};
\draw[-,darkblue] (1,1.2) to (1,1.3);
\node[darkblue] (l3) at (1,1.4) {\tiny $9$};
\node[draw,circle,inner sep=0pt,thick,minimum width=1em] (8) at (1,-1) {$8$};
\draw[-,darkblue] (1.2,-1) to (1.3,-1);
\node[darkblue] (l8) at (1.4,-1) {\tiny $7$};
\draw[-,darkblue] (.8,-1) to (.7,-1);
\node[darkblue] (l8) at (.6,-1) {\tiny $2$};
\node[draw,circle,inner sep=0pt,thick,minimum width=1em] (6) at (1,-2) {$6$};
\draw[-,darkblue] (1,-2.2) to (1,-2.3);
\node[darkblue] (l6) at (1,-2.4) {\tiny $9$};
\node[draw,circle,inner sep=0pt,thick,minimum width=1em] (4) at (2,0) {$4$};
\draw[-,darkblue] (2.2,0) to (2.3,0);
\node[darkblue] (l4) at (2.4,0) {\tiny $9$};
\draw[->,thick] (2) -- (10);
\draw[->,thick] (9) -- (10);
\draw[->,thick] (5) -- (10);
\draw[->,thick] (7) -- (5);
\draw[->,thick] (1) -- (5);
\draw[->,thick] (3) -- (1);
\draw[->,thick] (4) -- (1);
\draw[->,thick] (8) -- (1);
\draw[->,thick] (6) -- (8);
\draw[->,dashed,ultra thick,darkred] plot [smooth] coordinates {(-1.24,0) (-1.5,0) (-1.6,.3) (-1.6,.5)};
\end{tikzpicture}} \hspace{1em} \scalebox{0.6}{$\begin{NiceArray}{|>{\centering\arraybackslash$} p{2.5em} <{$}>{\centering\arraybackslash$} p{2.5em} <{$}>{\centering\arraybackslash$} p{2.5em} <{$}>{\centering\arraybackslash$} p{2.5em} <{$}>{\centering\arraybackslash$} p{2.5em} <{$}>{\centering\arraybackslash$} p{2.5em} <{$}>{\centering\arraybackslash$} p{2.5em} <{$}>{\centering\arraybackslash$} p{2.5em} <{$}>{\centering\arraybackslash$} p{2.5em} <{$}|} \hline\cc (\!(\overline{\ten}1)\!)& & & & & & & & 
\\ \cc (\!(\overline{2}1)\!)& & & & & & & & 
\\ \cc (\!(\overline{7}1)\!)& & & & & & & & 
\\ \cd (\!(\overline{5}1)\!)& \cc (\!(\overline{2}1)\!)& (\!(13)\!)& & & & & & 
\\ & & \cd (\!(\overline{3}1)\!)& (\!(14)\!)& & & & & 
\\ & & & \cd (\!(\overline{4}1)\!)& \cc (\!(\overline{\ten}1)\!)& \cc (\!(16)\!)& \cc (\!(\overline{7}1)\!)& (\!(18)\!)& 
\\ & & & & & & & \cc (\!(16)\!)& 
\\ & & & & & & & \cd (\!(\overline{8}1)\!)& \cc (\!(19)\!)
\\ (\!(15)\!)& & & & & & & & 
\\ \cc (\!(19)\!)& & & & & & & & \\ \hline \CodeAfter 
  \tikz \draw [very thick] (1-|1) -- (5-|1) -- (5-|3) -- (6-|3) -- (6-|4) -- (7-|4) -- (7-|8) -- (9-|8) -- (9-|10); \tikz \draw [very thick] (1-|2) -- (4-|2) -- (4-|4) -- (5-|4) -- (5-|5) -- (6-|5) -- (6-|9) -- (8-|9) -- (8-|10); \tikz \draw [very thick] (9-|1) -- (9-|2) -- (11-|2); \tikz \draw [very thick] (10-|1) -- (11-|1);
\end{NiceArray}$}\]

The path above describes a clockwise turn around the vertex $1$ of the tree above (reproduced from~\Cref{fig:main}: starting at the corner $(\!(\overline{5}1)\!)$ corresponding to the neighbor $5$, there is a run-leaf with label 2 corresponding to a horizontal step of length 2, then the neighbor $3$ is visited as the corners $(\!(13)\!)$ and $(\!(\overline{3}1)\!)$ (there is a single vertical step since $3$ only has $1$ as a neighbor), there is a run-leaf with label 1 corresponding to one horizontal step,  the neighbor $4$ is visited as the corners $(\!(14)\!)$ and $(\!(\overline{4}1)\!)$ (again, there is a single vertical step since $4$ only has $1$ as a neighbor), then the run-leaf with label 4 gives four horizontal steps, we visit the neighbor $8$ as the corners $(\!(18)\!)$ and $(\!(\overline{8}1)\!)$ with two vertical steps between them because $8$ is connected to $6$, then a run-leaf with label 2, and then we revisit the neighbor 5 as the corner $(\!(15)\!)$ and we must make five vertical steps to return to the corner $(\!(\overline{5}1)\!)$ because of the vertices $5,7,10,2,9$. 
\end{example}


\begin{lemma}
\label{maxDist}
    If $\u$ is an $e$-subword of $\blambda_n$, then for all integers $k$ and all $a, b$ with $0 \leq b - a < n(n-1)$,
    \[
        \big|\left(u_{a}\cdots u_{b}\right)(k) - k\big| \leq n-2,
    \]
    where the indices are taken modulo $n(n-1)$.
\end{lemma}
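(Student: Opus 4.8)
The plan is to fix an integer $k$ and study the ``trajectory'' of $k$ under the action of suffixes of $\u$, reducing the bound to a statement about a single periodic $\pm1/0$ walk. Write $N=n(n-1)$, and recall that $u_j$ is either $e$ or $s_{(j-1)\bmod n}$. First I would extend $\u$ to a bi-infinite word by $u_{j+N}\coloneqq u_j$ and show that \emph{every} length-$N$ window has product $e$: setting $F_p\coloneqq u_p u_{p+1}\cdots u_{p+N-1}$, one has $F_1=e$ since $\u$ is an $e$-subword, and $F_{p+1}=u_p^{-1}F_p u_p$, so $F_p=e$ for all $p$ by induction. Consequently, for fixed $k$ the running value $(u_a\cdots u_b)(k)$ (computed right-to-left) is periodic in $a$ with period $N$ and returns to $k$ after a full window. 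Thus $(u_a\cdots u_b)(k)-k$ equals the number of letters in the arc $[a,b]$ that \emph{increase} the running value minus the number that \emph{decrease} it, and over one window these two counts are equal. Since the cyclic complement of the arc $[a,b]$ is itself an arc whose net displacement is $-\big((u_a\cdots u_b)(k)-k\big)$, it suffices to prove the one-sided bound: no arc produces a net increase of more than $n-2$.

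Next I would record when increases and decreases occur. A letter $u_p=s_{(p-1)\bmod n}$ raises the running value $v$ (from $v$ to $v+1$) exactly when $v\equiv p-1\pmod n$, i.e.\ at a \emph{taken} position with $p\equiv v+1\pmod n$, and it lowers $v$ exactly when $p\equiv v\pmod n$. The key spacing estimate is: between two consecutive increases at positions $p>p'$ (processing order) with $d$ decreases in between, the residue of the running value drops by exactly $d$, forcing $p-p'\equiv d-1\pmod n$; since the $d$ decreases occupy $d$ distinct positions strictly between $p$ and $p'$ we also have $p-p'\ge d+1$, and together these give
\[
  p-p'\ \ge\ (n-1)+d\qquad(n\ge 2).
\]
Summing this over the $U$ cyclic gaps between consecutive increases (which collectively contain all $U$ decreases) yields $N\ge U(n-1)+U=Un$, hence $U\le n-1$: an $e$-subword performs at most $n-1$ increases (equivalently $n-1$ decreases) per period for each token.

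Finally I would rule out a net increase of $n-1$. If some arc achieved net increase $\ge n-1$, then because the total number of increases is $\le n-1$, the arc must contain \emph{all} $n-1$ increases and \emph{no} decreases, while its complement contains all $n-1$ decreases. The $n-1$ increases are spaced at least $n-1$ apart, so together with the intervening skips they occupy at least $(n-2)(n-1)+1$ positions. After the last increase the token sits at its top value $V$, with $V\equiv p^\ast\pmod n$ where $p^\ast$ is that increase's position; the first decrease needs a position $\equiv V\equiv p^\ast\pmod n$, so it lies at least $n$ positions lower, forcing at least $n-1$ intervening skip positions; and the $n-1$ decreases themselves occupy at least $n-1$ further positions. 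Adding these disjoint blocks gives
\[
  \big[(n-2)(n-1)+1\big]+(n-1)+(n-1)\ =\ n^2-n+1\ =\ N+1,
\]
which exceeds the period length $N$ --- a contradiction. Hence every arc has net increase at most $n-2$, and by complementation $\big|(u_a\cdots u_b)(k)-k\big|\le n-2$.

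The genuinely delicate point --- and the step I expect to be the main obstacle --- is precisely the last paragraph's residue-chaining argument that \emph{forces the $\ge n-1$ empty positions between the final increase and the first decrease}: this is exactly what upgrades the easy estimate $n-1$ (coming from $U\le n-1$ alone) to the claimed $n-2$. Everything else is bookkeeping, but I would take care over two things: verifying the periodicity reduction (the conjugation identity $F_{p+1}=u_p^{-1}F_pu_p$ and the resulting balance of increases and decreases), and checking the spacing inequality $p-p'\ge (n-1)+d$ in the boundary cases $d=0$ and $n=2$, where the bound degenerates to the forced displacement $0$.
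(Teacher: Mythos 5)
Your proof is correct, and it turns on the same two elementary facts as the paper's: (i) since the letters of $\blambda_n$ cycle through $s_0,\dots,s_{n-1}$, two events for a fixed token at consecutive letter indices must occur at least $n-1$ positions apart, and (ii) an increase and a decrease at the same residue must occur at least $n$ apart. The difference is in how these are deployed. The paper also reduces to a one-sided bound by complementation and rotation (\Cref{rotOfEword}), but then selects monotone level-crossing subsequences---$n-1$ decreases inside the offending segment at letters $s_{k-1},\dots,s_{k-n+1}$, and $n-1$ increases in the complementary prefix at the same letters---and gets its contradiction from a pigeonhole on rows: the increase positions $j_1<\dots<j_{n-1}$ occupy distinct rows, while the linking inequality $j_{n-1}\le i_{n-1}-n$ (your fact (ii)) bars the last two rows, leaving $n-1$ indices for only $n-2$ rows. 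You instead first prove a standalone per-period bound---each token is increased at most $n-1$ times per period---via the unified spacing estimate $p-p'\ge (n-1)+d$ summed over cyclic gaps, and then run an extremal analysis of the only configuration that could achieve net increase $n-1$. Your route proves strictly more (a bound on the \emph{total} number of events per period, not just the net change, plus a spacing inequality that tolerates interspersed decreases), at the cost of machinery the paper's crossing selection avoids; the final counts coincide, as your $\bigl[(n-2)(n-1)+1\bigr]+(n-1)+(n-1)=n(n-1)+1$ is exactly the paper's row pigeonhole in disguise. You also isolate the right crux: the $\ge n-1$ forced idle positions between the last increase and the first decrease is precisely what upgrades the easy bound $n-1$ to $n-2$, just as $j_{n-1}\le i_{n-1}-n$ does in the paper.
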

\begin{proof}

Any segment $\left(u_a\cdots u_{b}\right)$ and any $k$ such that \[\left(u_a\cdots u_{b}\right)(k) - k \geq n-1\] can be converted to the segment $\left(u_{b+1}\cdots u_{a-1}\right)$ and $k'=\left(u_a \cdots u_b\right)(k)$ with \[\left(u_{b+1}\cdots u_{a-1}\right)(k') - k' \leq 1-n,\] since
    \[
        \left(u_{b+1}\cdots u_{a-1}\right)\big(\left(u_a \cdots u_b\right)(k)\big) - \left(u_a \cdots u_b\right)(k) = k - \left(u_a \cdots u_b\right)(k) \leq 1-n.
    \]
    So it suffices to consider the case $\left(u_a\cdots u_{b}\right)(k) - k \leq 1-n$, so that the number of terms in $u_a\cdots u_{b}$ that decrease $k$ (by one, since each $u_i$ is a simple reflection) is at least $n-1$.    Moreover, we can take the index $b$ to be $n(n-1)$ by rotating the subword.  The terms that decrease $k$ correspond to indices $a \leq i_{n-1} < \cdots < i_1 \leq n(n-1)$ such that $u_{i_\ell} = s_{k - \ell}$. Notice that $i_{n-1} \leq (n-1)^2 + 1$.
    
    Since $\left(u_1 \cdots u_{a-1}u_a\cdots u_{b}\right)(k) = k$, there must exist at least $n-1$ terms in $(u_1\cdots u_{a-1})$ that increase $(u_a \cdots u_b)(k)$, so we can find indices $1 \leq j_1 \leq \cdots \leq j_{n-1} \leq a-1$ such that $u_{j_\ell} = s_{k-\ell}$. Each of these indices must be on its own row and not on the last two rows (because $j_{n-1} \leq i_{n-1} - n \leq (n-2)(n-1)$. So we reach a contradiction because $\blambda_n$ only has $n$ rows.
\end{proof}

\begin{lemma}
\label{smallMoves}
    If $\u$ is an $e$-subword of $\blambda_n$ and $(\!(a,b)\!) \in \Inv(\u)$, then $|b - a| \leq n-1$.
\end{lemma}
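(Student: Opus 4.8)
The plan is to reduce the statement to a bound on how far a single prefix product of $\u$ can spread two adjacent integers, and then to establish that bound by a counting argument on the rows of the array of $\blambda_n$, in the spirit of \Cref{maxDist}.

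First I would unwind the definition of $\Inv(\u)$. If $(\!(a,b)\!)$ is the $k$-th reflection of $\Inv(\u)$, then by \Cref{def:inv} it equals $u_{(k-1)}\,s_{c}\,u_{(k-1)}^{-1}$, where $s_c = (\blambda_n)_k$ is the $k$-th letter of $\blambda_n$ and $c = (k-1)\mod n$. Since $s_c = (\!(c,c+1)\!)$, conjugation gives $\{a,b\} = \{u_{(k-1)}(c),\, u_{(k-1)}(c+1)\}$, and because $|b-a|$ depends only on the reflection (two representatives of one affine reflection differ by a simultaneous shift or a swap, both of which preserve $|b-a|$), it suffices to prove
\[
  \big|u_{(k-1)}(c+1) - u_{(k-1)}(c)\big| \le n-1, \qquad c = (k-1)\mod n .
\]
Here the coupling between the adjacent pair $(c,c+1)$ and the prefix length $k-1$ is essential: for an arbitrary element $w\in\AS_n$ the difference $w(c+1)-w(c)$ is unbounded, so the argument must use that $u_{(k-1)}$ is a prefix product of an $e$-subword evaluated at the matched phase $c=(k-1)\mod n$.

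Second, I would record the two consequences of \Cref{maxDist}. Writing $w = u_{(k-1)}$ and using the $e$-subword identity $u_{(k-1)}\,u^{(k)} = u_{(n(n-1))} = e$, we have $w^{-1} = u^{(k)} = u_k\cdots u_{n(n-1)}$, so both $w$ and $w^{-1}$ are segments of $\u$; hence $|w(x)-x|\le n-2$ and $|w^{-1}(x)-x|\le n-2$ for every integer $x$. Setting $a = w(c)$ and $b = w(c+1)$, the segment $g := w^{-1}$ then satisfies $g(a)=c$, $g(b)=c+1$ while moving every integer by at most $n-2$. Note that applying the displacement bound to $c$ and to $c+1$ separately yields only $|b-a|\le 2n-3$; the extra savings must come from a joint analysis of the two trajectories $a\mapsto c$ and $b\mapsto c+1$ under $g$.

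Finally, for the joint bound I would argue by contradiction. Assume $|b-a|\ge n$; since $a\not\equiv b\bmod n$ this forces $b-a\ge n+1$ (taking $a<b$). Then $g$ carries $a$ and $b$, which are more than $n$ apart, to the adjacent values $c$ and $c+1$, so $g$ must raise $a$ up to $c$ and lower $b$ down to $c+1$; because $b-a\ge n+1$, the residue classes traversed by these two motions, $\{a,\dots,c-1\}$ and $\{c+1,\dots,b-1\}$ modulo $n$, together exhaust all $n$ classes. Reading $\blambda_n$ as its $n\times(n-1)$ array, each row is a monotone run of $n-1$ consecutive simple-reflection indices and hence omits exactly one index, while the phase $c=(k-1)\mod n$ pins where $g$ begins within a row. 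The plan is to run the row-counting of \Cref{maxDist} simultaneously for the ascent of $a$ and the descent of $b$: each required index must be realized on its own row, and the reflections raising $a$ compete with those lowering $b$ for the same rows, so realizing both net motions would require more than the $n$ rows available, a contradiction. I expect this last step---showing rigorously that the ascent and descent reflections cannot be packed into the row budget \emph{jointly} (packing them separately only reproduces the weak $2n-3$ bound)---to be the main obstacle, precisely because it is where the global $e$-subword constraint and the phase $c=(k-1)\mod n$ must be used together.
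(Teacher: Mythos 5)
Your opening matches the paper's proof: the paper likewise restates the claim as $\left|u_{(j)}(j+1)-u_{(j)}(j)\right|\leq n-1$, rules out a difference of exactly $n$, and invokes \Cref{maxDist} to localize both images around $\{j,j+1\}$, so your first two steps are sound. The gap is exactly the step you defer, and the plan you sketch for it does not go through. The root problem is that you run \emph{both} trajectories---the ascent $a\mapsto c$ and the descent $b\mapsto c+1$---under the single segment $g=w^{-1}=u_k\cdots u_{n(n-1)}$. Under a common segment, one letter occurrence can advance both trajectories simultaneously: if at some point the image of $a$ is $v$ and the image of $b$ is $v+n+1$ (their residues $v$ and $v+1$ are distinct, as required), then the letter $s_{v\bmod n}$ raises the first to $v+1$ and lowers the second to $v+n$, so it is at once an up-crossing for $a$ and a down-crossing for $b$. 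Hence your premise that ``each required index must be realized on its own row'' is unjustified jointly (it holds only within each trajectory separately), and the $\geq n$ required crossings need not occupy $\geq n$ distinct positions, let alone distinct rows: the two motions can cooperate rather than ``compete.'' Three further problems: even granting one row per crossing, $\geq n$ crossings against exactly $n$ rows is not yet a contradiction---you need a strict excess; the residue classes traversed do not ``exhaust all $n$ classes'' (the class of $c$ occurs in $[a,b-1]$ only at $c$ itself, so the union covers $n-1$ classes, albeit with multiplicity $\geq n$); and you silently assume the uncrossed assignment $g(a)=c$, $g(b)=c+1$, while the crossed assignment $g(a)=c+1$, $g(b)=c$ is equally consistent with the displacement bounds of \Cref{maxDist} and must be treated.

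The paper's proof supplies precisely the missing mechanism, and it is not your argument with details filled in: it \emph{splits} the two motions between the prefix and the suffix. In the uncrossed case it tracks the ascent of $j+1$ to $j+1+d$ under the prefix $u_1\cdots u_j$, forcing letters $s_{j+1},\dots,s_{j+d}$ at positions $\leq j$, and then uses the $e$-subword identity $u^{(j+1)}=u_{(j)}^{-1}$ (which you also noted, but deployed differently) to convert the descent of $j$ to $j-c$ into an ascent of $j-c$ back up to $j$ under the suffix, forcing letters $s_{j-1},\dots,s_{j-c}$ at positions $\geq j+1$. Because the two families sit on opposite sides of position $j$, they can never share a position or a row; within each family the rows are automatically distinct (consecutive forced letters would have to appear in the wrong order inside a single row); and neither family can use the row containing $j$ itself. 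That separating row is what produces the strict excess $c+d+1\geq n+1>n$ and the contradiction. The crossed case is then reduced to the uncrossed one by locating the letter at which the two trajectories swap. So your instinct that a joint analysis is needed is correct, but the joint count only closes after the prefix/suffix separation---and that is the idea missing from your proposal.
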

\begin{proof}
    We need to show that
    \[
        \left|\left(u_1\cdots u_{j}\right)(j+1) - \left(u_1\cdots u_{j}\right)(j)\right| \leq n-1
    \]
    for all $j = 0,\dots,n(n-1)-1$. Suppose not. Since each inversion is an affine reflection, it is not possible to have
    \[
        |\left(u_1\cdots u_{j}\right)(j+1) - \left(u_1\cdots u_{j}\right)(j)| = n,
    \]
    so suppose that
    \[
        \left|\left(u_1\cdots u_{j}\right)(j+1) - \left(u_1\cdots u_{j}\right)(j)\right| \geq n+1.
    \]
    By \cref{maxDist} there are two cases to consider:
    \begin{enumerate}
        \item $\left(u_1\cdots u_{j}\right)(j+1) \leq j < j + 1 \leq \left(u_1\cdots u_{j}\right)(j)$
        \item $\left(u_1\cdots u_{j}\right)(j) \leq j < j + 1 \leq \left(u_1\cdots u_{j}\right)(j+1)$
    \end{enumerate}
    In the first case, there must exist an index $1 < \ell < j$ such that $u_\ell = s_{\ell - 1}$ and
    \[
        \left(u_{\ell + 1} \cdots u_j\right)(j) = \ell-1 \quad \text{and} \quad \left(u_{\ell + 1} \cdots u_j\right)(j+1) = \ell.
    \]
    Choose the smallest such $\ell$. Then
    \[
        \left|\left(u_{1}\cdots u_{\ell-1}\right)(\ell-1) - \left(u_{1}\cdots u_{\ell-1}(\ell)\right)\right| = \left|\left(u_1\cdots u_{j}(j+1)\right) - \left(u_1\cdots u_{j}\right)(j)\right| \geq n+1,
    \]
    and
    \[
        \left(u_1\cdots u_{\ell-1}\right)(\ell - 1) \leq \ell-1 < \ell \leq \left(u_1\cdots u_{j}\right)(\ell).
    \]
    So the first case reduces to the second.  Let
    \[
        c = j -  \left(u_1\cdots u_{j}\right)(j) \quad \text{and} \quad d = \left(u_1\cdots u_{j}\right)(j+1) - (j+1),
    \]
    so $c + d \geq n$. Now there must be indices $1 \leq i_d < i_{d-1} \cdots < i_1 \leq j$ such that $u_{i_\ell} = s_{j + \ell}$. Each of these indices must be on its own row. Moreover, $i_1$ cannot be on the same row as $j$.

    Now there must also be indices $j+1 \leq k_1 < \cdots < k_c \leq n(n-1)$ such that $u_{k_\ell} = s_{j - \ell}$. Again, each of these indices must be its own row. Moreover, $k_1$ cannot be on the same row as $j$.  It follows that we need to use at least $c + d + 1 \geq n + 1$ rows, which is a contradiction.
\end{proof}

\begin{lemma}
\label{lem:subCond1}
    For $\u \in \S_n$, $\rru$ is a tree-like factorization in $\EFt_n$.
\end{lemma}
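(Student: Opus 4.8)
The plan is to verify the equivalent condition for being tree-like given in \Cref{connection}. Two of its ingredients are already available: we have seen that $\rru \in \Fact(\lambda_n)$, and \Cref{smallMoves} shows that every entry of $\Inv(\u)$---in particular every skip reflection $\rrr_\ell = (\!(a,b)\!)$---satisfies $|b-a| \le n-1 < n$. Thus it will suffice to produce integers $a_0, \dots, a_{2n-2}$ with $\rrr_\ell = (\!(a_{\ell-1}, a_\ell)\!)$ for every $\ell$, since the magnitude bound $|a_\ell - a_{\ell-1}| < n$ demanded by \Cref{connection} is then immediate. Equivalently, the entire content is to show that consecutive skip reflections share an integer, and that these shared integers thread the skip reflections into a single walk $a_0, a_1, \dots, a_{2n-2}$.

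To produce the walk, I would examine the local structure of $\Inv(\u)$ across a run of takes. Write $i_1 < \cdots < i_{2n-2}$ for the skip positions and let $s_{c_j}$ be the letter of $\blambda_n$ at position $i_j$, using integer lifts of the letter indices. By \Cref{def:inv} the $j$th skip reflection is $\rrr_j = u_{(i_j-1)}\, s_{c_j}\, u_{(i_j-1)}^{-1} = u_{(i_j-1)}(\!(c_j, c_j+1)\!)u_{(i_j-1)}^{-1}$. Between two consecutive skips all letters are takes, so $u_{(i_{j+1}-1)} = u_{(i_j-1)}\, w$, where $w = s_{c_j+1}s_{c_j+2}\cdots s_{c_j + t - 1}$ is the product of the intervening letters and $t = i_{j+1}-i_j$ (so that the letter index at $i_{j+1}$ is $c_{j+1} = c_j + t$). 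The crucial computation is that this contiguous block of consecutive simple reflections telescopes: tracking the orbit of the single integer $c_j+t$, each factor read from the right decrements it by one, giving $w(c_j+t) = c_j + 1$. Consequently $\rrr_{j+1} = u_{(i_j-1)}(\!(c_j+1,\, w(c_j+t+1))\!)u_{(i_j-1)}^{-1}$, whose first leg $u_{(i_j-1)}(c_j+1)$ is exactly the second leg of $\rrr_j$. Setting $a_j := u_{(i_j-1)}(c_j+1)$ and $a_0 := u_{(i_1-1)}(c_1)$ then threads the skip reflections into a consistent walk with $\rrr_\ell = (\!(a_{\ell-1}, a_\ell)\!)$, as required. (The same identity can be extracted from \Cref{invProd} by writing $\rrr_{j+1} = (\rrr_1\cdots \rrr_j)^{-1}(\rrr_1\cdots \rrr_{j+1})$ and evaluating the partial products at $\ell = i_{j+1}-1$ and $\ell = i_{j+1}$.)

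Combining this walk with the magnitude bound from \Cref{smallMoves} and invoking \Cref{connection} yields $\rru \in \EFt_n$. I expect the main obstacle to be the robustness of the telescoping identity $w(c_j+t) = c_j+1$: a run of takes can cross the $s_{n-1}\to s_0$ boundary between rows of $\blambda_n$, and since $\u$ has only $2n-2$ skips among $n(n-1)$ letters, such a run may even be longer than $n$ and so repeat simple reflections. These difficulties are only apparent once one works with genuine integers rather than residues: at each step of the orbit the residue of the current integer matches the residue of the letter being applied, because the letters of $\blambda_n$ decrease by one modulo $n$ as one reads the block from right to left, so the unit decrement persists verbatim through every wraparound. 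A secondary point to check is global consistency of the walk---that the leg $a_{j-1}$ of $\rrr_j$ arising from the pair $(j-1,j)$ agrees with the one arising from the pair $(j,j+1)$---which follows because $\rrr_j$ has two distinct legs and the telescoping pins down which leg is shared on each side.
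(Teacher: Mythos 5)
Your proposal is correct and takes essentially the same route as the paper: the paper's proof also produces the walk $a_0,\dots,a_{2n-2}$ by observing that the block of takes between consecutive skips telescopes---in its notation, $u_{(j-1)}(j-1) = u_{(i-1)}\, e\, s_{i}\cdots s_{j-2}(j-1) = u_{(i-1)}(i)$, which is exactly your identity $w(c_j+t)=c_j+1$ conjugated by $u_{(i_j-1)}$---so that consecutive skip reflections share a leg, and then concludes by invoking \cref{smallMoves} and \cref{connection}. Your additional care about runs of takes crossing row boundaries (resolved by tracking genuine integers, since the letter at position $p$ swaps $p-1$ and $p$) is the same mechanism the paper's one-line computation relies on implicitly.
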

\begin{proof} Write $\rru = \left[r_1,\dots,r_{2n-2}\right]$. If the skips corresponding to $r_\ell$ and $r_{\ell + 1}$ are at indices $i$ and $j$, respectively, then
    \[
        u_{(j-1)}(j-1) = u_{(i-1)} e s_{i}\cdots s_{j-2}(j-1) = u_{(i-1)}(i).
    \]
    Since $r_\ell = (\!(u_{(i-1)}(i-1), u_{(i-1)}(i) )\!)$ and $r_{\ell + 1} = (\!(u_{(j-1)}(j-1), u_{(j-1)}(j) )\!)$, it follows that there exist $a_0,\dots a_{2n-2} \in \mathbb{Z}$ such that $r_\ell = (\!(a_{\ell - 1} , a_\ell)\!)$.     The result then follows from \cref{smallMoves} and \cref{connection}.
\end{proof}

We now show that~\Cref{def:subwords} captures the usual definition of \emph{distinguished} subword, that a simple reflection must be used if it causes the current product to decrease in weak order.

\begin{corollary}
\label{cor:distinguished}
  For $\u \in \S_n$, if $u_{(j)}s_j < u_{(j)}$ then $u_{j+1} = s_j$.
\end{corollary}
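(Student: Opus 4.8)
The plan is to prove the contrapositive form: show that an element of $\S_n$ admits no \emph{skip-down}, i.e.\ no skip whose simple generator is a right descent of the current prefix product. Concretely, fix a position $j+1$, whose generator is $s_j=(\!(j,j+1)\!)$, and suppose $u_{(j)}s_j<u_{(j)}$; I must rule out $u_{j+1}=e$. If $u_{j+1}=e$, then position $j+1$ is a skip, and by \Cref{def:inv} its skip reflection is $u_{(j)}s_ju_{(j)}^{-1}=(\!(u_{(j)}(j),u_{(j)}(j+1))\!)$. By \Cref{prop:descents} the descent hypothesis says exactly $u_{(j)}(j)>u_{(j)}(j+1)$, so in the chain presentation of $\rru$ furnished by \Cref{lem:subCond1}---where the skip at index $i$ is written $r_\ell=(\!(u_{(i-1)}(i-1),u_{(i-1)}(i))\!)=(\!(a_{\ell-1},a_\ell)\!)$ with $a_\ell=r_\ell(a_{\ell-1})$ and consecutive reflections sharing an endpoint---this skip contributes a strictly \emph{decreasing} step $a_{\ell-1}>a_\ell$.

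The heart of the argument is then to show that this specific chain is in fact strictly increasing, contradicting the decreasing step. By \Cref{lem:subCond1} the factorization $\rru$ is tree-like, so by \Cref{connection} it also admits an increasing chain $A_0<A_1<\cdots<A_{2n-2}$ with $r_\ell=(\!(A_{\ell-1},A_\ell)\!)$. Any chain is determined by its starting value through the recursion $c_\ell=r_\ell(c_{\ell-1})$, so it remains to see that the chain $(a_\ell)$ of \Cref{lem:subCond1} and the increasing chain $(A_\ell)$ begin on the same residue class modulo $n$; then they differ by a global shift $a_\ell=A_\ell+cn$, whence $a_\ell-a_{\ell-1}=A_\ell-A_{\ell-1}>0$. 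If instead they started on the two different residues of $r_1$, then---since each $r_\ell$ interchanges its two residues---both chains would have to remain supported on $r_{\ell+1}$ at every step, which forces $A_{\ell-1}\equiv A_{\ell+1}\pmod n$, hence $A_{\ell+1}=A_{\ell-1}+n$ and $r_{\ell+2}=r_\ell$ for all $\ell$; that is, the $2n-2$ reflections of $\rru$ would repeat with period two. But by \Cref{cor:pairs} the skip reflections of a tree-like factorization are pairwise distinct (each increases a unique $k\in[n-1]$, and the left and right ends of a pair differ), so such a period-two repetition is impossible once $2n-2>2$, i.e.\ for $n\ge 3$.

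Hence for $n\ge 3$ the specific chain is increasing, contradicting $a_{\ell-1}>a_\ell$; the case $n=2$ is checked by hand, since $\S_2$ consists of a single subword which is visibly distinguished. This establishes $u_{j+1}=s_j$. The one delicate point---and the step I expect to require the most care---is the coincidence of the two chains: it hinges on combining the determinacy of a chain from its starting value with the distinctness of the skip reflections from \Cref{cor:pairs} (with \Cref{cor:zero} pinning $a_0\equiv a_{2n-2}\equiv 0\bmod n$ as an anchoring aid) in order to exclude the degenerate period-two pattern.
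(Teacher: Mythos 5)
Your proof is correct, and its outer skeleton coincides with the paper's: pass to the contrapositive, identify the inversion of a skip at position $j+1$ as $(\!(u_{(j)}(j),u_{(j)}(j+1))\!)$, observe that a descent would make this a \emph{decreasing} step of the chain furnished by \Cref{lem:subCond1}, and contradict the fact that this chain is increasing. The genuine difference is in how that last fact is obtained. The paper gets it by citing the \emph{proofs} of \Cref{lem:subCond1} and \Cref{connection}: the backward direction of \Cref{connection} shows that for $n>2$ \emph{every} chain satisfying \Cref{eq:cond3} is increasing (all of its reflections must decrease the residue class of $0$, since each reflection increases some $k\in[n-1]$), and the subword chain satisfies \Cref{eq:cond3} by \Cref{smallMoves}. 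You, working only from statements, instead prove a uniqueness assertion: a chain is determined by its starting value, so the subword chain is either a global shift of the increasing chain coming from tree-likeness (done), or it starts on the other residue class of $r_1$, which you exclude by showing this forces all $2n-2$ reflections onto a single pair of residue classes with period-two repetition, impossible for $n\ge3$. That route is sound, and it is a real alternative to the paper's argument, but it leaves two debts implicit: (i) your step $A_{\ell+1}=A_{\ell-1}+n$ needs the bound $|A_\ell-A_{\ell-1}|<n$ for the \emph{increasing} chain, which is content of the forward proof of \Cref{connection}---the statement alone only guarantees \emph{some} small-step chain, not that the increasing one has small steps; and (ii) pairwise distinctness of the skip reflections requires, beyond the within-pair distinctness you cite from \Cref{cor:pairs}, that reflections from \emph{different} pairs differ---this needs the observation that two pairs with the same residue support would share their first and last positions, contradicting that each position increases exactly one $k$. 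Both debts are easily paid from the cited results, so these are imprecisions rather than gaps; note also that your opposite-residue case dies even more cheaply without either one, since a product of reflections all supported on two residue classes fixes every other residue class, while $\lambda_n$ moves them all.
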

\begin{proof}
It is clear that $u_{(j)}s_j < u_{(j)}$ iff $u_{(j)}(j) > u_{(j)}(j+1)$.  Suppose that $u_{j+1} = e$, so the inversion corresponding to this skip is $(\!(u_{(j)}(j), u_{(j)}(j+1) )\!)$. By the proofs of \Cref{lem:subCond1} and \Cref{connection}, it follows that $u_{(j)}(j) < u_{(j)}(j+1)$.  The result follows by contraposition.
\end{proof}

\subsection{Skip reflections are cyclic}
We continue to work towards the bijection between $\S_n$ and $\E_n$ by now showing that $\rru$ is actually a cyclic factorization in $\Ft_n$.

\begin{lemma}
\label{locSkip}
    For $\u$ a subword of $\blambda_n$, write $\rru = \left[r_1,\dots,r_{\ell}\right]$, and let $i_j$ denote the index of the skip corresponding to $r_j$. Then
    \[
        (r_1\cdots r_{j-1})r_j(r_{j-1}\cdots r_1) = \left(\!\left(0 , i_j + \left\lfloor \frac{i_j-1}{n-1} \right\rfloor\right)\!\right).
    \]
\end{lemma}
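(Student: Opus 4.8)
The plan is to first strip away all dependence on $\u$ by telescoping the conjugation. Applying \Cref{invProd} with $\ell = i_j - 1$ (legitimate because $i_{j-1} \le i_j - 1 < i_j$, and for $j=1$ the empty product agrees with $(\blambda_n)_{(i_1-1)}u_{(i_1-1)}^{-1}=e$ since every letter before the first skip is a take) gives
\[
    r_1 \cdots r_{j-1} = (\blambda_n)_{(i_j - 1)}\, u_{(i_j - 1)}^{-1}.
\]
On the other hand, \Cref{def:inv} together with $u_{i_j} = e$ shows $r_j = u_{(i_j-1)}\, s_c\, u_{(i_j-1)}^{-1}$, where $s_c$ is the letter of $\blambda_n$ at position $i_j$, that is $c = (i_j - 1)\bmod n$. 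Writing $m := i_j - 1$ and $w_m := (\blambda_n)_{(m)}$ and using $r_{j-1}\cdots r_1 = (r_1\cdots r_{j-1})^{-1}$, the two copies of $u_{(m)}^{\pm 1}$ cancel and the expression telescopes to
\[
    (r_1\cdots r_{j-1})\, r_j\, (r_{j-1}\cdots r_1) = w_m\, s_c\, w_m^{-1}.
\]
Thus the lemma reduces to the $\u$-free identity $w_m s_c w_m^{-1} = (\!(0,\, m+1+\lfloor m/(n-1)\rfloor)\!)$ with $c = m\bmod n$, a statement purely about prefixes of the reduced word for $\lambda_n$.

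Next, since conjugating $s_c = (\!(c,c+1)\!)$ by $w_m$ yields $(\!(w_m(c),\, w_m(c+1))\!)$, it suffices to evaluate $w_m$ at the two integers $c$ and $c+1$. I would write $m = qn + \rho$ with $q = \lfloor m/n\rfloor$ and $\rho = m\bmod n = c$, and factor the prefix into full periods and a tail,
\[
    w_m = \sigma^{q}\,\pi_\rho, \qquad \sigma := s_0 s_1 \cdots s_{n-1} = (\blambda_n)_{(n)}, \qquad \pi_\rho := s_0 s_1 \cdots s_{\rho-1},
\]
so that $w_m(x) = \sigma^q(\pi_\rho(x))$.

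A short direct calculation then shows that $\pi_\rho$ acts as the cycle $0 \to 1 \to \cdots \to \rho \to 0$ on $\{0,\dots,\rho\}$ while fixing the residues $\rho+1,\dots,n-1$, so $\pi_\rho(c) = 0$ and (for $c \le n-2$) $\pi_\rho(c+1) = c+1$; and that $\sigma$ is the ``successor among non-multiples of $n$'', with $\sigma(kn) = (k-1)n$. Hence $\sigma^q(0) = -qn$, so $w_m(c) = -qn$ is a multiple of $n$, and after the shift $(\!(a,b)\!) = (\!(a+qn, b+qn)\!)$ the reflection becomes $(\!(0,\, w_m(c+1) - w_m(c))\!)$. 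The remaining step is to read off $w_m(c+1) = \sigma^q(c+1)$ from the fact that $\sigma^q$ advances the position of a non-multiple of $n$ by $q$, and to verify $w_m(c+1) - w_m(c) = m + 1 + \lfloor m/(n-1)\rfloor$ by using $qn + \rho = q(n-1) + (q+\rho)$ to unwind the nested floors.

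All of the conceptual content lives in the telescoping of the first paragraph, which converts the claim into a computation about $\lambda_n$ alone; everything afterward is bookkeeping. I expect the main obstacle to be pinning down the iterate $\sigma^q$ on $\mathbb{Z}$ precisely and executing the floor-function arithmetic, and in particular treating the boundary case $c = n-1$ separately: there $c+1 = n$ is itself a multiple of $n$, one has $\pi_{n-1}(n) = n+1$ rather than $c+1$, and I would check that this variant still collapses to the same closed form $(\!(0,\, m+1+\lfloor m/(n-1)\rfloor)\!)$.
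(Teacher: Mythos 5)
Your proposal is correct and takes essentially the same route as the paper's proof: both telescope via Lemma~\ref{invProd} so that the $u_{(i_j-1)}^{\pm 1}$ factors cancel, reducing the claim to evaluating $(s_0\cdots s_{i_j-2})\,s_{i_j-1}\,(s_{i_j-2}\cdots s_0)$, a statement about prefixes of $\blambda_n$ alone. The only divergence is the final arithmetic: the paper sets $a_k \coloneqq s_0\cdots s_{k-2}(k)$ and solves the recursion $a_k = a_{k-(n-1)}+n$ (via $w(i+n)=w(i)+n$), whereas you factor the prefix as $\sigma^q\pi_\rho$ and iterate the successor map $\sigma$ explicitly --- an equivalent bookkeeping choice whose flagged boundary case $c=n-1$ does check out.
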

\begin{proof}
    From Lemma \ref{invProd}, we have
    \begin{align*}
        (r_1\cdots r_{j-1})r_j(r_{j-1}\cdots r_1) &= (\lambda_{n})_{(i_{j}-1)} u_{(i_{j}-1)}^{-1}(u_{(i_{j}-1)} s_{i_j-1} u_{(i_{j}-1)}^{-1}) u_{(i_{j}-1)} (\lambda_{n})_{(i_{j}-1)}^{-1} \\
        &= (s_0 \cdots s_{i_j-2}) s_{i_j-1} (s_{i_j-2} \cdots s_0) \\
        &= (\!( s_0 \cdots s_{i_j-2}(i_j - 1) , s_0 \cdots s_{i_j-2}(i_j) )\!) \\
        &= (\!( 0 , s_0 \cdots s_{i_j-2}(i_j) )\!).
    \end{align*}
    Write $a_k \coloneqq s_0 \cdots s_{k-2}(k)$. If $k < n$, then $a_k = k$. Otherwise,
    \begin{align*}
        a_k &= s_0\cdots s_{k - n}(k) = s_0 \cdots s_{k - n - 1}(k + 1)\\ &= s_0 \cdots s_{k - n - 1}(k - n + 1) + n = a_{k-(n-1)} + n.
    \end{align*}
    This shows that $a_k = k + \lfloor (k-1)/(n-1) \rfloor$, so the result follows.
\end{proof}

\begin{corollary}
    For $\u \in \S_n$, $\rru$ is a cyclic factorization in $\Ft_n$.
\end{corollary}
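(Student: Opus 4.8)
The plan is to derive this corollary directly from the machinery already assembled, since almost all of the work has been front-loaded into the preceding lemmas. First I would record that \Cref{lem:subCond1} already establishes that $\rru$ is a tree-like factorization, i.e.\ $\rru \in \EFt_n$. Consequently the only thing left to verify is that $\rru$ satisfies the two conditions of \Cref{def:cyclic_factorizations}, and the most economical way to do this is through the equivalent criterion supplied by \Cref{incClock}: writing $\rru = [r_1,\dots,r_{2n-2}]$ and
\[
    (r_1\cdots r_{j-1})\,r_j\,(r_{j-1}\cdots r_1) = (\!(0,m_j)\!),
\]
the factorization $\rru$ is cyclic if and only if $m_1 < \cdots < m_{2n-2}$.

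The key step is then simply to read off the values $m_j$ from \Cref{locSkip}, which computes
\[
    m_j = i_j + \left\lfloor \frac{i_j - 1}{n-1} \right\rfloor,
\]
where $i_j$ is the index in $\blambda_n$ of the skip corresponding to $r_j$. Because the skip reflections $\rru$ are, by definition, the restriction of $\Inv(\u)$ to the skip positions listed from left to right, their indices are strictly increasing: $i_1 < i_2 < \cdots < i_{2n-2}$. I would then observe that the function $f(i) = i + \lfloor (i-1)/(n-1)\rfloor$ is strictly increasing, since
\[
    f(i+1) - f(i) = 1 + \left\lfloor \frac{i}{n-1} \right\rfloor - \left\lfloor \frac{i-1}{n-1} \right\rfloor \geq 1.
\]
Applying $f$ to the strictly increasing sequence of skip indices yields $m_1 < m_2 < \cdots < m_{2n-2}$, so \Cref{incClock} gives that $\rru$ is cyclic. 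Combined with tree-likeness from \Cref{lem:subCond1}, this places $\rru$ in $\Ft_n$, as desired.

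I do not expect a genuine obstacle at this stage: the difficulty has been absorbed into \Cref{locSkip} and \Cref{incClock}, which were arranged precisely so that the monotonicity of the skip indices transfers directly to the monotonicity of the $m_j$. The only points requiring a moment's care are confirming the strict monotonicity of $f$ (the floor term is merely weakly increasing, but the leading $i$ already forces strict growth) and confirming that the skip indices are strictly, not merely weakly, increasing; both are immediate. One could in principle bypass \Cref{incClock} and check the two conditions of \Cref{def:cyclic_factorizations} directly from the torus-walk description illustrated in \Cref{fig:invs}, but that route is strictly more laborious and I would not pursue it.
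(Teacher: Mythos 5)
Your proposal is correct and follows exactly the paper's own route: tree-likeness from \cref{lem:subCond1}, then the cyclic conditions via \cref{locSkip} and \cref{incClock}. The only difference is that you spell out the monotonicity argument (strictly increasing skip indices $i_1 < \cdots < i_{2n-2}$ composed with the strictly increasing function $f(i) = i + \lfloor (i-1)/(n-1)\rfloor$) that the paper compresses into the phrase ``these follow from'' those two results.
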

\begin{proof}
Write $\rru = \left[r_1,\dots,r_{2n-2}\right]$. 
    From \Cref{lem:subCond1}, we know $\left[r_1,\ldots, r_{2n-2}\right]$ satisfies \Cref{def:treelike_factorizations}, so it remains to show \crefitem{def:cyclic_factorizations}{CF3} and \crefitem{def:cyclic_factorizations}{CF2}. These follow from \cref{locSkip} and \cref{incClock}.
\end{proof}

\begin{proposition}
Let $\rr \in \Ft_n$.  Then there exists a subword $\u \in \S_n$ such that $\rr=\rru$.
\end{proposition}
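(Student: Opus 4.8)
The plan is to invert the map $\u\mapsto\rru$ at the level of the conjugated reflections $(\!(0,\ast)\!)$ that appear in both \Cref{incClock} and \Cref{locSkip}. Given $\rr=[\rrr_1,\dots,\rrr_{2n-2}]\in\Ft_n$, I first fix representatives $\rrr_\ell=(\!(a_{\ell-1},a_\ell)\!)$ with $a_0<\cdots<a_{2n-2}$ (possible since $\rr$ is tree-like, by \Cref{connection}) and normalize the global shift so that $a_0=0$; this is legitimate because $a_0\equiv 0\bmod n$ by \Cref{cor:zero}. Telescoping $\rrr_1\cdots\rrr_{2n-2}(a_{2n-2})=a_0=0$ together with $a_{2n-2}\equiv 0\bmod n$ forces $\lambda_n(a_{2n-2})=a_{2n-2}-n(n-1)=0$, so $a_{2n-2}=n(n-1)$. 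I then set $c_j:=(\rrr_1\cdots\rrr_{j-1})\rrr_j(\rrr_{j-1}\cdots\rrr_1)=(\!(0,m_j)\!)$ with $m_j:=c_j(0)$, so that by \Cref{incClock} (using that $\rr$ is cyclic) we have $m_1<\cdots<m_{2n-2}$, each $m_j\not\equiv 0\bmod n$.

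Second, I would pin down the window the $m_j$ live in, so that each is realizable as a skip position. A short computation gives $m_1=\rrr_1(0)=a_1\in\{1,\dots,n-1\}$ and, writing $\rrr_1\cdots\rrr_{2n-3}=\lambda_n\rrr_{2n-2}$, also $m_{2n-2}=\lambda_n(a_{2n-3})=a_{2n-3}+n$; since $a_{2n-2}=n(n-1)$ and $a_{2n-2}-a_{2n-3}\in\{1,\dots,n-1\}$, this yields $m_{2n-2}=n^2-(a_{2n-2}-a_{2n-3})\le n^2-1$. Hence every $m_j$ is an integer in $[1,n^2-1]$ not divisible by $n$. The function $f(i)=i+\lfloor(i-1)/(n-1)\rfloor$ from the proof of \Cref{locSkip} is a strictly increasing bijection from $\{1,\dots,n(n-1)\}$ onto exactly the non-multiples of $n$ in $[1,n^2-1]$ (it sends $q(n-1)+s\mapsto qn+s$), so I may define $i_j:=f^{-1}(m_j)$ and obtain positions $1\le i_1<\cdots<i_{2n-2}\le n(n-1)$. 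Let $\u$ be the subword of $\blambda_n$ whose skips are exactly $i_1,\dots,i_{2n-2}$.

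Third, I would verify $\rru=\rr$ and $\u\in\S_n$ essentially for free. Applying \Cref{locSkip} to $\u$ shows that the $j$th skip reflection $r_j$ of $\rru$ satisfies $(r_1\cdots r_{j-1})r_j(r_{j-1}\cdots r_1)=(\!(0,f(i_j))\!)=(\!(0,m_j)\!)=c_j$. Since a reflection in such a chained sequence is recovered from its conjugate by $x_j=(x_1\cdots x_{j-1})^{-1}c_j(x_1\cdots x_{j-1})$, an induction on $j$ (base case $r_1=c_1=\rrr_1$) gives $r_j=\rrr_j$ for all $j$, i.e.\ $\rru=\rr$; in particular $r_1\cdots r_{2n-2}=\lambda_n$. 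Finally, \Cref{invProd} applied to the last skip, with the segment run out to $\ell=n(n-1)$, gives $r_1\cdots r_{2n-2}=(\blambda_n)_{(n(n-1))}\,u_{(n(n-1))}^{-1}=\lambda_n\,u_{(n(n-1))}^{-1}$, whence $u_{(n(n-1))}=e$. Thus $\u$ is a subword with $2n-2$ skips and trivial product, i.e.\ $\u\in\S_n$ by \Cref{def:subwords}.

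I expect the only genuine obstacle to be the bookkeeping in the second step: showing the $m_j$ fall in the exact range $[1,n^2-1]$ so that each $f^{-1}(m_j)$ is a legal position. The upper bound is where the tree-like structure is actually used, via the normalization $a_0=0\Rightarrow a_{2n-2}=n(n-1)$ and the gap bound $a_\ell-a_{\ell-1}<n$; everything after that is the formal reconstruction, and the $e$-subword property drops out of \Cref{invProd} rather than needing a separate computation.
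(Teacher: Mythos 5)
Your proposal is correct and follows essentially the same route as the paper's own proof: normalize so $a_0=0$ and $a_{2n-2}=n(n-1)$, use \Cref{incClock} to get $0<m_1<\cdots<m_{2n-2}<n^2$, invert the position formula of \Cref{locSkip} to define the skip indices $i_j$, and then conclude $\rru=\rr$ via \Cref{locSkip} and distinguishedness via \Cref{invProd}. Your write-up is in fact slightly more careful than the paper's (the explicit telescoping for $a_{2n-2}=n(n-1)$, the bijectivity of $f$, and the induction recovering $r_j=\rrr_j$ are left implicit there), but the argument is the same.
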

\begin{proof}
Suppose that $\left[r_1,\ldots, r_{2n-2}\right]$ is a cyclic factorization of $\lambda_n$.  By \cref{nest}, we can write $r_\ell = (\!(a_{\ell - 1}, a_\ell )\!)$ with $a_{\ell-1} < a_\ell$, $a_0 = 0$, and $a_{2n-2} = n(n-1)$.

As in~\Cref{incClock}, for $1 \leq j \leq 2n-2$, define integers $m_j$ by
\[
        (\rrr_1\cdots \rrr_{j-1})\rrr_j(\rrr_{j-1}\cdots \rrr_1) = (\!(0,m_j)\!).
    \]
Then 
\[
    m_{2n-2} = a_{2n-1} + n < a_{2n-2}+n = n(n-1) + n = n^2.
\]
By \cref{incClock}, we have
\[
    0 = a_0 < a_1 = m_1 < m_2 < \cdots < m_{2n-2} < n^2.
\]
For $j = 1,\dots, 2n-2$, define 
\[
    i_j \coloneqq m_j - \left\lfloor \frac{m_j - 1}{n} \right\rfloor \text{ so that }  m_j = i_j - \left\lfloor \frac{i_j - 1}{n-1}\right\rfloor.
\]
%
Notice that
\[
    1 \leq i_j \leq n^2 - 1 - \left\lfloor \frac{n^2 - 2}{n}\right\rfloor = n(n-1),
\]
so we can take $\u$ to be the subword of $\lambda_n$ with skips at indices $i_j$. By \cref{locSkip}, we have $\rru = \left[r_1,\dots, r_{2n-2}\right]$. Then, since $r_1 \cdots r_{2n-2} = \lambda_n$, it follows from \cref{invProd} that $\u$ is a distinguished subword.
\end{proof}

\begin{corollary}
The map $\u \mapsto \rru$ is a bijection between $\S_n$ and $\Ft_n$.
\end{corollary}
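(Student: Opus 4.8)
The plan is to assemble three facts, of which only one still requires argument. Well-definedness of $\u \mapsto \rru$ as a map $\S_n \to \Ft_n$ is exactly the preceding corollary, and surjectivity is exactly the preceding proposition; the genuinely substantive work is therefore already in hand. All that remains is injectivity.

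For injectivity I would start from the observation that a subword of $\blambda_n$ is completely determined by the set of indices at which it skips, since at every other index the letter is forced to equal the corresponding simple reflection. Hence it suffices to show that the skip indices of $\u$ are recoverable from $\rru$ alone. This is precisely the content of \cref{locSkip}: writing $\rru = [r_1,\dots,r_{2n-2}]$ and forming the conjugates $(r_1\cdots r_{j-1})r_j(r_{j-1}\cdots r_1) = (\!(0,m_j)\!)$, the values $m_j = i_j + \lfloor (i_j-1)/(n-1)\rfloor$ are manifestly functions of $\rru$ only.

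The last point is to pass from $m_j$ back to the skip index $i_j$. Here I would note that $i \mapsto i + \lfloor (i-1)/(n-1)\rfloor$ is strictly increasing, since its consecutive differences are at least $1$, and therefore injective; its inverse is $m \mapsto m - \lfloor (m-1)/n\rfloor$, already recorded in the proof of the preceding proposition. Consequently the $i_j$, and hence the skip pattern of $\u$, are uniquely determined by $\rru$. Thus if $\rru = \rr^{\v}$ for $\u,\v \in \S_n$, then $\u$ and $\v$ have identical skips and so coincide, which gives injectivity and completes the proof that $\u \mapsto \rru$ is a bijection.

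I do not expect any real obstacle: each step is immediate from results already established, and the only item meriting a line of checking is the strict monotonicity of $i \mapsto i + \lfloor (i-1)/(n-1)\rfloor$, which is what makes the skip indices unambiguously recoverable and thereby upgrades the surjection of the preceding proposition to a genuine bijection.
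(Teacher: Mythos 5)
Your proposal is correct and takes essentially the same route as the paper: the paper states this corollary without a separate proof, treating it as immediate from the preceding corollary (well-definedness) and proposition (surjectivity), with injectivity implicit in \cref{locSkip} exactly as you spell it out---the conjugates $(r_1\cdots r_{j-1})r_j(r_{j-1}\cdots r_1)$ determine the $m_j$, and the strictly increasing map $i \mapsto i + \lfloor (i-1)/(n-1)\rfloor$ recovers the skip indices and hence the subword. One small remark: your relation $m_j = i_j + \lfloor (i_j-1)/(n-1)\rfloor$ carries the correct sign (consistent with \cref{locSkip}), whereas the displayed equation in the paper's proof of the surjectivity proposition has a sign typo, writing $m_j = i_j - \lfloor (i_j-1)/(n-1)\rfloor$.
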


\section{Subwords and cyclic trees}
\label{sec:bijection}

In this section, we prove our main theorem---that there is a bijection between maximal distinguished subwords of $\blambda_n$ and (cyclically-embedded) vertex-labeled trees with $n$ vertices.

\begin{theorem}
\label{thm:main_thm}
There is a bijection between $\S_n$ and $\E_n$.
\end{theorem}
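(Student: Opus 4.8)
The plan is to prove \Cref{thm:main_thm} simply by composing two bijections already in hand, so that essentially no new work is required. The final corollary of \Cref{sec:subwords} establishes that the skip-reflection map $\u \mapsto \rru$ is a bijection from $\S_n$ onto the set $\Ft_n$ of cyclic factorizations of $\lambda_n$, while \Cref{thm:cyclic} gives mutually inverse bijections $T \mapsto \rrT$ and $\rr \mapsto \Trr$ between $\E_n$ and $\Ft_n$. Since these two bijections share the set $\Ft_n$ as their common middle term, their composite $\u \mapsto \Trr$ (taking $\rr = \rru$) is a bijection $\S_n \to \E_n$, with inverse sending a cyclically-embedded tree $T$ to the unique maximal distinguished subword whose skip reflections form $\rrT$.

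To make the statement self-contained, I would also spell out the composite map directly, bypassing the intermediate factorization. By \Cref{cor:pairs}, exactly half of the $2n-2$ skip reflections of $\u$ can be written as $(\!(a,b)\!)$ with $1 \le a < b \le n$; these $n-1$ reflections are read as the edges $\{a,b\}$ of the tree $T = \Trr$, and the cyclic embedding is recovered from the order in which the neighbors of each vertex $k$ appear along $\rrk$. Thus a single pass through $\Inv(\u)$ produces both the edge set and the plane embedding, which is the content worth recording for the reader.

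Because both constituent maps have already been proven bijective, there is no genuine obstacle to overcome here; the substance of the theorem lives entirely in \Cref{sec:cyclic_trees,sec:subwords}. The only point deserving a word of care is that the two bijections really do compose---i.e.\ that the codomain of $\u \mapsto \rru$ equals the domain of $\rr \mapsto \Trr$---which is immediate since both are, by definition, the set $\Ft_n$ of cyclic factorizations of $\lambda_n$.
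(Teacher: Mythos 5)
Your proposal is correct and takes essentially the same approach as the paper: the paper's proof of \Cref{thm:main_thm} is exactly the composition you describe, with its ``subwords to cyclic trees'' direction being the composite $\u \mapsto \rru \mapsto \Trr$ (reading edges off the skip reflections, as you note via \Cref{cor:pairs}), and its ``cyclic trees to subwords'' direction being the inverse composite, where the run-leaf decoration is merely an explicit combinatorial recipe for locating the unique subword whose skip reflections equal $\rrT$. Since both constituent bijections ($\S_n \leftrightarrow \Ft_n$ from the subwords section and $\E_n \leftrightarrow \Ft_n$ from \Cref{thm:cyclic}) are established before \Cref{sec:bijection}, your composition argument carries no gap.
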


The proof of~\Cref{thm:main_thm} will occupy the next two subsections.

\subsection{From subwords to cyclic trees}
The forward direction of the bijection is easy: given $\u \in \S_n$, compute the inversions of the skips $\rru$, then create a tree $T \in \E_n$ with edges $(a,b)$ between $a<b$ when $(\!(a,b)\!)$ and $(\!(\overline{b},a)\!)$ appear as reflections in $\rru$.  This tree can then be cyclically embedded using~\Cref{sec:cyclic_from_trees}.


\subsection{From cyclic trees to subwords}
\label{def:run_leaves}

The other direction of the bijection is a little more difficult.  To more easily describe it, we decorate the trees with \emph{run-leaves}.

Fix a cyclic tree $T \in \E_n$. 
To each vertex $v \neq n$ we will attach $\deg(v)$ many \defn{run-leaves}, so that in a clockwise walk around $T$, edges and run-leaves alternate.  At the vertex $n$, we instead add $\deg(n) + 1$ many leaves: two between the smallest and largest neighbors of $n$ (because $T$ is cyclically embedded, these vertices will be adjacent).   We index the run-leaves based on when we see them in the clockwise walk starting from $n$ towards its smallest neighbor, so that our walk visits run-leaves $l_0,\ldots,l_{2n-2}$ and edges $e_0,\ldots,e_{2n-1}$ in the order
\begin{equation}
\label{eq:takes_and_skips}
\left[l_0, e_0,l_1,e_1,l_2,\ldots,e_{2n-1},l_{2n-2}\right].
\end{equation}

We now label each run-leaf $l_k$ with an integer $\ell(l_k) \coloneqq \ell_k$ with $1 \leq \ell_k \leq n-1$ as follows (for now, ignore the first and last run-leaves, $l_0$ and $l_{2n-2}$, attached to vertex $n$).  The run-leaf $l_k$ is situated between the two edges $e_{k-1}=(v_{k-1},v_k)$ and $e_k=(v_k,v_{k+1})$, incident to the vertex $v_k$ to which $l_k$ has been attached.  The label $\ell_k$ is assigned according to the following four cases, illustrated in~\Cref{fig:runleaflabels}:

\begin{figure}[htbp]
\[\begin{array}{|lc|cr|} \hline & & & \\[-1ex] (a) &
\raisebox{-.5\height}{\begin{tikzpicture}
\node[draw,rounded corners=2pt,inner sep=3pt,thick,minimum width=1ex] (-1) at (-1,0) {$v_{k-1}$};
\node[draw,rounded corners=2pt,inner sep=3pt,thick,minimum width=1ex] (0) at (0,0) {$v_{k}$};
\node[draw,rounded corners=2pt,inner sep=3pt,thick,minimum width=1ex] (1) at (1,0) {$v_{k+1}$};
\node[darkblue] (l) at (0,.55) {\tiny $\ell_{k}$};
\draw[-,darkblue] (0,.2) to (0,.4);
\draw[<-,thick] (-1) to (0);
\draw[<-,thick] (0) to (1);
\draw[->,dashed,ultra thick, darkred] plot [smooth] coordinates {(-1.5,.4) (-1,0.4) (0,.7) (1,0.4) (1.5,.4)};
\end{tikzpicture}} & 
\raisebox{-.5\height}{\begin{tikzpicture}
\node[draw,rounded corners=2pt,inner sep=3pt,thick,minimum width=1ex] (-1) at (1,.5) {$v_{k-1}$};
\node[draw,rounded corners=2pt,inner sep=3pt,thick,minimum width=1ex] (0) at (0,0) {$v_{k}$};
\node[darkblue] (l) at (.7,0) {\tiny $\ell_{k}$};
\node[draw,rounded corners=2pt,inner sep=3pt,thick,minimum width=1ex] (1) at (1,-.5) {$v_{k+1}$};
\draw[-,darkblue] (.3,0) to (.5,0);
\draw[->,thick] (-1) to (0);
\draw[<-,thick] (0) to (1);
\draw[->,dashed,ultra thick, darkred] plot [smooth] coordinates {(1.9,0.7) (1.8,.3) (1,.1) (1,-.1) (1.8,-.3) (1.9,-0.7)};
\end{tikzpicture}} & (b)
\\ &
 v_{k+1} = v_k + {\color{darkblue}{\ell_k}} \ (\mathrm{mod}\ n-1) &
v_{k+1} = v_{k-1} + {\color{darkblue}{\ell_k}} \ (\mathrm{mod}\ n-1) &
 \\ \hline 
 & & & \\[-1ex] (c) &
 \raisebox{-.5\height}{\begin{tikzpicture}
\node[draw,rounded corners=2pt,inner sep=3pt,thick,minimum width=1ex] (-1) at (-1,0) {$v_{k+1}$};
\node[draw,rounded corners=2pt,inner sep=3pt,thick,minimum width=1ex] (0) at (0,0) {$v_{k}$};
\node[darkblue] (l) at (0,-.5) {\tiny $\ell_{k}$};
\node[draw,rounded corners=2pt,inner sep=3pt,thick,minimum width=1ex] (1) at (1,0) {$v_{k-1}$};
\draw[-,darkblue] (0,-.2) to (0,-.4);
\draw[<-,thick] (-1) to (0);
\draw[<-,thick] (0) to (1);
\draw[<-,dashed,ultra thick, darkred] plot [smooth] coordinates {(-1.5,-.4) (-1,-0.4) (0,-.7) (1,-0.4) (1.5,-.4)};
\end{tikzpicture}} &
\raisebox{-.5\height}{\begin{tikzpicture}
\node[draw,rounded corners=2pt,inner sep=3pt,thick,minimum width=1ex] (0) at (0,0) {$v_{k-1}$};
\node[draw,rounded corners=2pt,inner sep=3pt,thick,minimum width=1ex] (1) at (1,0) {$v_{k}$};
\node[darkblue] (l) at (1.7,0) {\tiny $\ell_{k}$};
\draw[-,darkblue] (1.3,0) to (1.5,0);
\draw[<-,thick] (0) to (1);
\draw[->,dashed,ultra thick, darkred] plot [smooth] coordinates {(-.5,.4) (0,0.4) (1.8,0.5) (1.8,-0.5) (0,-0.4) (-.5,-.4)};
\end{tikzpicture}} & (d)
\\
& v_{k} = v_{k-1} + {\color{darkblue}{\ell_k}} \ (\mathrm{mod}\ n-1) &
{\color{darkblue}{\ell_k}}=n-1 &
\\ \hline
\end{array}\]
\caption{Run-leaf rules, where the dashed line denoting our walk around a cyclic tree $T$. The walk begins at vertex $n$ and steps first towards the smallest neighbor of $n$ walking clockwise.}
\label{fig:runleaflabels}
\end{figure}
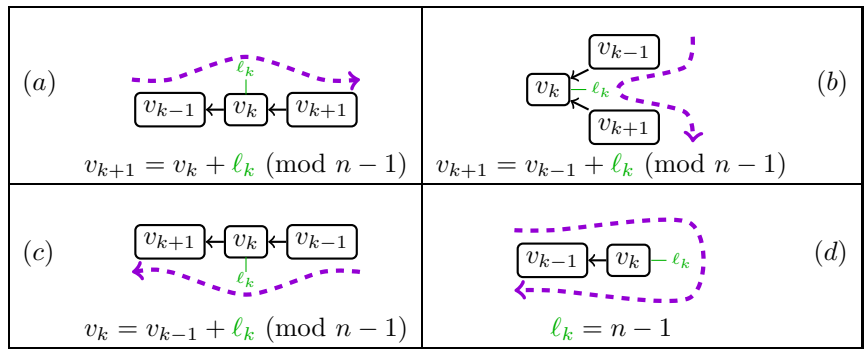

\begin{enumerate}[label=(\alph*)]
\item if $v_{k+1}\neq v_{k-1}$ and the path from $v_{k+1}$ to $n$ goes through $v_k$ and $v_{k-1}$, then $\ell_k=v_{k+1}-v_{k} \mod n-1$.
\item if the paths from $v_{k-1}$ to $n$ and from $v_{k+1}$ go through $v_k$, then $\ell_k=v_{k+1}-v_{k-1} \mod n-1$.
\item if $v_{k+1}\neq v_{k-1}$ and the path from $v_{k-1}$ to $n$ goes through $v_k$ and $v_{k+1}$, then $\ell_k=v_{k}-v_{k-1} \mod n-1$.
\item if $v_{k+1}= v_{k-1}$, then $\ell_k = n-1$.
\end{enumerate}

We can view these four cases as specializations of the general rule for $1\leq \ell_k \leq n-1$:
\begin{align*}
\ell_k &= v'_{k+1} - v'_{k-1} \mod (n-1), \text{ where } \\
v'_{k\pm 1}&=\begin{cases} v_{k} & \text{if }v_{k\pm 1}\text{ is on the path from }v_k\text{ to }n, \\ v_{k\pm 1} & \text{ otherwise.} \end{cases}
\end{align*}

Finally, we define $\ell_0$ to be $v_1$, the smallest neighbor of $n$, and $\ell_{2n-2}$ to be $(n-1)-v_{2n-3}$; note that $v_{2n-3}$ is necessarily the largest neighbor of $n$ and $\ell_{2n-2}$ is possibly zero.  By construction, the sum of the labels of the run-leaves adjacent to any vertex $k \in [n]$ is $n-1$.

\begin{remark}
Because there is a unique path from $v_k$ to $n$, we never have the case \raisebox{-.5\height}{\begin{tikzpicture}
\node[draw,rounded corners=2pt,inner sep=3pt,thick,minimum width=1ex] (-1) at (1,.5) {$v_{k-1}$};
\node[draw,rounded corners=2pt,inner sep=3pt,thick,minimum width=1ex] (0) at (0,0) {$v_{k}$};
\node[darkblue] (l) at (.7,0) {\tiny $\ell_{k}$};
\node[draw,rounded corners=2pt,inner sep=3pt,thick,minimum width=1ex] (1) at (1,-.5) {$v_{k+1}$};
\draw[-,darkblue] (.3,0) to (.5,0);
\draw[<-,thick] (-1) to (0);
\draw[->,thick] (0) to (1);
\draw[->,dashed,ultra thick, darkred] plot [smooth] coordinates {(1.9,0.7) (1.8,.3) (1,.1) (1,-.1) (1.8,-.3) (1.9,-0.7)};
\end{tikzpicture}}.
\end{remark}


The subword $\u^T \in \S_n$ is now described using the sequence of run-leaves and edges in~\Cref{eq:takes_and_skips} to describe its takes and skips (see~\Cref{sec:subwords} for these definitions): each run-leaf $l_k$ corresonds to a series of $\ell_k-1$ successive takes, while each edge $e_k$ corresponds to a single skip.  \cref{smallMoves}, \cref{nest}, and \cref{locSkip} justify this procedure.    Write \[\rru = \left[r_1,\dots,r_{2n-2}\right],\] and let $i_j$ be the index of the skip corresponding to $r_j$.  Consider $r_\ell = (\!(a,b)\!)$ and $r_{\ell+1} = (\!(b,c)\!)$ with $a < b < c$.  Then we have the following cases (each one corresponds to a case in~\Cref{fig:runleaflabels}):
    \begin{enumerate}[label=(\alph*)]
        \item If $r_\ell$ and $r_{\ell + 1}$ are both the left ends of their pairs, then 
        \begin{itemize}
            \item $i_{\ell + 1} - i_\ell = c - b$ if $(c \mod n) > (b \mod n)$
            \item $i_{\ell + 1} - i_\ell = c - b - 1$ if $(c \mod n) < (b \mod n)$
        \end{itemize}

        \item If $r_\ell$ is the right end of its pair and $r_{\ell + 1}$ is the left end of its pair, then 
        \begin{itemize}
            \item $i_{\ell + 1} - i_\ell = c - a$ if $(c \mod n) > (a \mod n)$
            \item $i_{\ell + 1} - i_\ell = c - a - 1$ if $(c \mod n) < (a \mod n)$
        \end{itemize}

        \item If $r_\ell$ and $r_{\ell + 1}$ are both the right ends of their pairs, then
        \begin{itemize}
            \item $i_{\ell + 1} - i_\ell = b - a$ if $(b \mod n) > (a \mod n)$
            \item $i_{\ell + 1} - i_\ell = b - a - 1$ if $(b \mod n) < (a \mod n)$
        \end{itemize}

        \item If $r_\ell$ is the left end of its pair and $r_{\ell + 1}$ is the right end of its pair, then $c = a+n$ and $i_{\ell + 1} - i_\ell = n-1$.
    \end{enumerate}


\begin{example}
Let $T$ be the cyclic tree from~\Cref{fig:main} (reproduced below):
\[
\raisebox{-.5\height}{
\begin{tikzpicture}
\node[draw,circle,inner sep=0pt,thick,minimum width=1em] (2) at (-1,1) {$2$};
\draw[-,darkblue] (-1,1.2) to (-1,1.3);
\node[darkblue] (l2) at (-1,1.4) {\tiny $9$};
\node[draw,circle,inner sep=0pt,thick,minimum width=1em] (9) at (-1,-1) {$9$};
\draw[-,darkblue] (-1,-1.2) to (-1,-1.3);
\node[darkblue] (l9) at (-1,-1.4) {\tiny $9$};
\node[draw,circle,inner sep=0pt,thick,minimum width=1em,black,fill=darkred!20,text opacity=1] (10) at (-1,0) {$10$};
\draw[-,darkblue] (-1+.16,.16) to (-1+.21,.21);
\node[darkblue] (l0) at (-1+.28,.28) {\tiny $3$};
\draw[-,darkblue] (-1+.16,-.16) to (-1+.21,-.21);
\node[darkblue] (l0) at (-1+.28,-.28) {\tiny $4$};
\draw[-,darkblue] (-1-.16,.16) to (-1-.21,.21);
\node[darkblue] (l0) at (-1-.28,.28) {\tiny $2$};
\draw[-,darkblue] (-1-.16,-.16) to (-1-.21,-.21);
\node[darkblue] (l0) at (-1-.28,-.28) {\tiny $0$};
\node[draw,circle,inner sep=0pt,thick,minimum width=1em] (5) at (0,0) {$5$};
\draw[-,darkblue] (0,-.2) to (0,-.3);
\node[darkblue] (l5) at (0,-.4) {\tiny $4$};
\draw[-,darkblue] (.14,.14) to (.21,.21);
\node[darkblue] (l5) at (.28,.28) {\tiny $3$};
\draw[-,darkblue] (-.14,.14) to (-.21,.21);
\node[darkblue] (l5) at (-.28,.28) {\tiny $2$};
\node[draw,circle,inner sep=0pt,thick,minimum width=1em] (7) at (0,1) {$7$};
\draw[-,darkblue] (0,1.2) to (0,1.3);
\node[darkblue] (l7) at (0,1.4) {\tiny $9$};
\node[draw,circle,inner sep=0pt,thick,minimum width=1em] (1) at (1,0) {$1$};
\draw[-,darkblue] (1.14,.14) to (1.21,.21);
\node[darkblue] (l5) at (1.28,.28) {\tiny $1$};
\draw[-,darkblue] (1.14,-.14) to (1.21,-.21);
\node[darkblue] (l5) at (1.28,-.28) {\tiny $4$};
\draw[-,darkblue] (.86,.14) to (.79,.21);
\node[darkblue] (l5) at (.72,.28) {\tiny $2$};
\draw[-,darkblue] (.86,-.14) to (.79,-.21);
\node[darkblue] (l5) at (.72,-.28) {\tiny $2$};
\node[draw,circle,inner sep=0pt,thick,minimum width=1em] (3) at (1,1) {$3$};
\draw[-,darkblue] (1,1.2) to (1,1.3);
\node[darkblue] (l3) at (1,1.4) {\tiny $9$};
\node[draw,circle,inner sep=0pt,thick,minimum width=1em] (8) at (1,-1) {$8$};
\draw[-,darkblue] (1.2,-1) to (1.3,-1);
\node[darkblue] (l8) at (1.4,-1) {\tiny $7$};
\draw[-,darkblue] (.8,-1) to (.7,-1);
\node[darkblue] (l8) at (.6,-1) {\tiny $2$};
\node[draw,circle,inner sep=0pt,thick,minimum width=1em] (6) at (1,-2) {$6$};
\draw[-,darkblue] (1,-2.2) to (1,-2.3);
\node[darkblue] (l6) at (1,-2.4) {\tiny $9$};
\node[draw,circle,inner sep=0pt,thick,minimum width=1em] (4) at (2,0) {$4$};
\draw[-,darkblue] (2.2,0) to (2.3,0);
\node[darkblue] (l4) at (2.4,0) {\tiny $9$};
\draw[->,thick] (2) -- (10);
\draw[->,thick] (9) -- (10);
\draw[->,thick] (5) -- (10);
\draw[->,thick] (7) -- (5);
\draw[->,thick] (1) -- (5);
\draw[->,thick] (3) -- (1);
\draw[->,thick] (4) -- (1);
\draw[->,thick] (8) -- (1);
\draw[->,thick] (6) -- (8);
\draw[->,dashed,ultra thick,darkred] plot [smooth] coordinates {(-1.24,0) (-1.5,0) (-1.6,.3) (-1.6,.5)};
\end{tikzpicture}}.\]
The sequence of run-leaf labels and edges visited during the clockwise walk around $T$ is:
\begin{align*}
\big[2,(\ten 2),9,(2 \ten),3,(\ten 5),2,(57),9,(75),3,(51),2,(13),9,(31),1,(14),9,\\
(41),4,(18),7,(86),9,(68),2,(81),2,(15),4,(5\ten),4,(\ten 9),9,(9\ten),0\big].
\end{align*}
Replacing run-leaves by runs (green) and edges by skips (white), we obtain the subword from~\Cref{fig:main} (reproduced below):
\renewcommand{\arraystretch}{.8}
\[\begin{array}{|c|c|c|c|c|c|c|c|c|} \hline
\cc 
 &  & \cc 
 & \cc 
 & \cc 
 & \cc  & \cc  & \cc  & \cc  \\\hline
 \cc  & & \cc 
 & \cc 
 & & \cc 
 &  & \cc  & \cc  \\\hline
 \cc  & \cc  & \cc 
 & \cc 
 & \cc 
 & \cc 
 & & \cc  & \cc  \\\hline
  & \cc  &  & \cc 
 & \cc 
 & \cc 
 & \cc 
 & \cc 
 & \cc  \\\hline
 \cc  & \cc  &  &  &
\cc 
 & \cc 
 & \cc 
 & \cc 
 & \cc 
 \\\hline
 \cc  & \cc  & \cc  &  & \cc  & \cc 
 & \cc 
 &  & \cc 
 \\ \hline
 \cc 
 & \cc  & \cc  & \cc  & \cc  & & \cc 
 & \cc 
 & \cc 
 \\\hline
 \cc 
 & \cc 
 & \cc  & \cc  & \cc  & & \cc  &
 & \cc 
 \\\hline
 & \cc 
 & \cc 
 & \cc  &  & \cc  & \cc & \cc  &
 \\\hline
 \cc 
 & \cc 
 & \cc 
 & \cc 
 & \cc  & \cc  & \cc  & \cc  &  \\ \hline
\end{array}.\]
\end{example}


\section{Enumeration}
\label{sec:details}
In this section we prove Cayley's formula for the number of vertex-labeled trees:
\[\left|\E_n\right|=n^{n-2}.\]
Our proof uses the bijection between $\E_n$ and $\S_n$ from~\Cref{thm:main_thm}, along with representation-theoretic techniques (previously obtained in a collaboration between the last author with P.~Galashin and T.~Lam) to compute the number of points in a particular braid variety $R_{\blambda_n}(\FF_q)$ over the finite field $\FF_q$ with $q$ elements.  Using a trace formula due to Opdam and an identity due to Haglund, we obtain that
\[\left|R_{\blambda_n}(\FF_q)\right|=(q-1)^{2n-2} [n]_q^{n-2},\]
where $[n]_q\coloneqq\frac{q^{n}-1}{q-1}$ is the usual $q$-analogue.
Certain \emph{distinguished subwords} $\mathcal{D}_{\blambda_n}$ index the Deodhar components of this braid variety $D_\u(\FF_q)$:
\[R_{\blambda_n}(\FF_q) = \bigsqcup_{\u \in \mathcal{D}_{\blambda_n}} D_\u(\FF_q),\]
but the maximal distinguished subwords in $\S_n$ are the only components that contribute to the sum when $q$ is sent to $1$:
\[|\S_n| = \left((q-1)^{-2n+2} \left|R_{\blambda_n}(\FF_q)\right|\right)\Bigg|_{q\to 1} = n^{n-2}.\]

\subsection{Braid varieties}
\label{sec:braid_varieties}
The usual definition of braid varieties extends to the context of Kac-Moody groups (for simplicity, we give a specialization of the more general definition).  For $\w = [s_1,s_2,\ldots, s_m]$ a word in the simple reflections $S$ of the Weyl group $W$, we denote this \defn{braid variety} (over a finite field) by $R_{\w}(\FF_q)$.  In slightly more detail, a split minimal Kac-Moody group $G$ is associated to a symmetrizable generalized Cartan matrix; it is generated by a split torus $T$ and root subgroups $U^\pm = \{U_{\pm \alpha_i}\}$.  We have opposite Borel subgroups $B^\pm$ generated by $T$ and $U^\pm$, we have the flag variety $\mathcal{B} = G/B^+$ with its decomposition into Schubert cells $\mathcal{B}_w = B^+ \cdot{w} B^+/B^+$ and opposite Schubert cells $\mathcal{B}^w = B^- \cdot{w} B^+/B^+$, and we can speak of the \defn{relative position} of two flags $B_1,B_2 \in G/B^+$ (written $B_1 \xrightarrow{w} B_2$ for $w \in W$).  Then for $\w = [s_1,s_2,\ldots, s_m]$ with $w = s_1s_2 \cdots s_m \in W$, we have
\[R_{\w}(\FF_q) = \left\{ B^+=B_0 \xrightarrow{s_1} B_1 \xrightarrow{s_2} B_2 \cdots \xrightarrow{s_m} B_m : B_m \in \mathcal{B}^e \right\}.\]

A \defn{distinguished subword} $\u$ of $\w$ is a subword for which a simple reflection must be used if it causes the current product to decrease in weak order---that is, if $u_{(j)}s_j < u_{(j)}$ then $u_{j+1} = s_j$ (see also~\Cref{cor:distinguished}).  Write $\mathcal{D}_{\w}$ for all distinguished subwords.  By a natural extension of~\cite{deodhar1985some} (see also~\cite{bao2021flag}), the braid variety $R_\w(\FF_q)$ has a Deodhar decomposition into 
\[R_{\w}(\FF_q) = \bigsqcup_{\u \in \mathcal{D}_{\w}} D_\u(\FF_q),\] where each $D_\u(\FF_q)$ is isomorphic to $(\FF_q^\times)^{s(\u)} \times \FF_q^{t(\u)} $, where $s(\u)$ is the number of skips of $\u$ and $t(\u)$ is half the number of takes.

Let $B_W$ be the \defn{braid group} for $W$ with generators $T_i$ for each $s_i \in S$, and let $H_W = B_W/(T_i^2=(q-1)T_i+q)$  be the \defn{Hecke algebra}, with usual basis $\{T_w\}_{w\in W}$.  Write $w=s_1\cdots s_m \in W$.  By the same arguments as~\cite[Lemmas A3 and A4]{kazhdan1979representations} and~\cite[Corollary 5.3]{galashin2022rational} the number of $\mathbb{F}_q$-points in the braid variety $R_{\w}(\FF_q)$ is given by the trace \[\left| R_{\w}(\FF_q)\right| = q^{\ell(w)} \tr( T_{\w}^{-1} ),\] where for $X \in H_W$, $\tr(X)$ returns the coefficient of the basis element $T_e$ indexed by the identity.

\subsection{Opdam's trace formula}
We now specialize to $G$ the affine Kac-Moody group of type $A_{n-1}$. Write $\Phi^+$ for the positive roots of $\mathrm{GL}_n$, $Q = \bigoplus_{i=1}^\rank \mathbb{Z} \alpha_i$ for its \emph{root lattice}, $Q^+ \subset Q$ for the positive span of the simple roots, and $\Lambda$ for the weight lattice.  Write $\widehat{S}_n$ for the \defn{extended affine symmetric group}, whose elements can be thought of as bijections $\hw: \mathbb{Z} \to \mathbb{Z}$ such that $\hw(i+n)=\hw(i)+n$ and $\sum_{i=1}^n \hw(i) = \binom{n+1}{2} \mod n$; it contains the elements of $\Lambda$ as translations.

Given $\lambda \in Q^+$, we express $\lambda$ in the basis of fundamental weights as $\lambda = \sum_{i=1}^{n-1} a_i \lambda_i$ and define $\lambda_+ = \sum_{i : a_i > 0} a_i \lambda_i$ and $\lambda_- = -\sum_{i : a_i < 0} a_i \lambda_i$.   


\begin{definition}
A \defn{Kostant partition} $(a_\alpha)_{\alpha \in \Phi^+}$ for $\lambda \in Q^+$ is a sequence of nonnegative integers indexed by positive roots such that $\lambda=\sum_{\alpha \in \Phi^+} a_\alpha \alpha$.  We denote the set of all Kostant partitions for $\lambda$ by $K(\lambda)$.
\end{definition}

Opdam proved the following formula for the trace in the Hecke algebra $\widehat{H}_n$ for the extended affine symmetric group.
\begin{theorem}[{\cite[Cor.~1.18]{opdam2003generating}}] Let $[k]_q=\frac{(q-1)^2}{q} \frac{q^k-q^{-k}}{q-q^{-1}}$.  For $\lambda=\lambda_+-\lambda_- \in Q^+$,
\[\mathrm{tr}(T_{{\lambda_-}}^{\phantom{-1}} T_{{\lambda_+}}^{-1}) = q^{(\ell({\lambda_-})-\ell({\lambda_+}))/2}\sum_{(a_\alpha) \in K(\lambda)} \prod_{\substack{\alpha \in \Phi^+ \\ a_\alpha > 0}} [a_\alpha]_q.\]
\label{thm:opdam}
\end{theorem}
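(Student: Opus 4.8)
The plan is to reduce Opdam's formula to a generating-function identity for the canonical trace evaluated on the commutative Bernstein subalgebra. Recall the Bernstein--Lusztig presentation of the affine Hecke algebra $\widehat{H}_n$: as a vector space $\widehat{H}_n \cong H_{S_n}\otimes \mathbb{C}[\Lambda]$, where $\mathbb{C}[\Lambda]$ is a commutative subalgebra with basis $\{\theta_\mu\}_{\mu\in\Lambda}$, and for $\lambda=\mu-\nu$ with $\mu,\nu$ dominant one has $\theta_\lambda=\theta_\mu\theta_\nu^{-1}$. The first step is to check that the prefactor $q^{(\ell(\lambda_-)-\ell(\lambda_+))/2}$ is exactly the length normalization converting the product $T_{\lambda_-}T_{\lambda_+}^{-1}$ of standard basis elements into the symmetric Bernstein element $\theta_{-\lambda}$; this is a direct computation with the defining relations together with additivity of length on dominant translations. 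The claim then becomes
\[
\tr(\theta_{-\lambda}) = \sum_{(a_\alpha)\in K(\lambda)}\prod_{\substack{\alpha\in\Phi^+\\ a_\alpha>0}}[a_\alpha]_q,
\]
where $\tr$ is the functional returning the coefficient of $T_e$.

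The heart of the argument is a product formula for the generating series of these traces. I would show
\[
\sum_{\lambda\in Q^+}\tr(\theta_{-\lambda})\,x^\lambda \;=\; \prod_{\alpha\in\Phi^+}\Bigl(1+\sum_{k\ge 1}[k]_q\,x^{k\alpha}\Bigr),
\]
since expanding the right-hand side and collecting the coefficient of $x^\lambda$ records precisely a choice of nonnegative multiplicity $a_\alpha$ for each positive root with $\sum_\alpha a_\alpha\alpha=\lambda$, weighted by $\prod_{a_\alpha>0}[a_\alpha]_q$ --- that is, the Kostant partition sum. To establish the product formula I would use that $\tr$ is a trace, $\tr(ab)=\tr(ba)$, together with the Bernstein commutation relation $\theta_\mu T_i - T_i\theta_{s_i\mu}=(q-1)(\theta_\mu-\theta_{s_i\mu})/(1-\theta_{-\alpha_i})$. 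Moving a $T_i$ past a $\theta$ and invoking cyclicity produces a recursion in the height $\sum_i a_i$ of $\lambda$, peeling off one simple root at a time; the geometric-series factor attached to each positive root $\alpha$ emerges from iterating the Demazure--Lusztig operator along a reduced word carrying $\alpha_i$ to $\alpha$. Alternatively --- and this is the route Opdam actually takes --- one evaluates $\tr$ through the Plancherel/residue calculus for $\widehat{H}_n$, where the denominators of the $c$-function directly assemble the product above.

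Finally I would expand the product, match the coefficient of $x^\lambda$ with $\sum_{(a_\alpha)\in K(\lambda)}\prod_{a_\alpha>0}[a_\alpha]_q$, and reconcile signs and the $q$-power prefactor with $\ell(\lambda_-)-\ell(\lambda_+)$ using $\ell(t_\mu)=\langle\mu,2\rho\rangle$ for dominant $\mu$. The main obstacle is the product formula itself: verifying that the trace of the Bernstein element factors over positive roots with exactly the coefficients $[k]_q$. The algebraic recursion is delicate because moving generators past the $\theta$'s introduces rational Demazure--Lusztig correction terms, which must be controlled so that the non-principal contributions cancel and only the claimed geometric series survives; making this cancellation transparent, rather than importing Opdam's harmonic-analytic machinery wholesale, is where the real work lies.
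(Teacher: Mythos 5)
This statement enters the paper as a black box: the theorem header itself carries the attribution to Opdam, and the paper contains no proof of it whatsoever --- it is used downstream (in the proof of Theorem 6.5) purely as a citation. So there is no internal argument to compare yours against; the only question is whether your sketch would stand as an independent proof. Your preparatory reductions are correct: writing $\theta_\mu = q^{-\ell(t_\mu)/2}T_{t_\mu}$ for dominant $\mu$, where $t_\mu$ is the corresponding translation, and using additivity of length on dominant translations, one gets $\theta_{-\lambda} = q^{(\ell(\lambda_+)-\ell(\lambda_-))/2}\,T_{\lambda_-}T_{\lambda_+}^{-1}$, so the prefactor in the statement is exactly your conversion factor, and the theorem is equivalent to $\mathrm{tr}(\theta_{-\lambda}) = \sum_{(a_\alpha)\in K(\lambda)}\prod_{a_\alpha>0}[a_\alpha]_q$, which is in turn equivalent to your generating-function identity by the coefficient-extraction argument you give (and it is consistent in rank one, where it reduces to $\mathrm{tr}(\theta_{-k\alpha})=[k]_q$).

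The gap is that the product formula itself --- which is the entire content of Opdam's theorem --- is never established. Your proposed mechanism (move $T_i$ past the $\theta$'s, invoke cyclicity of $\mathrm{tr}$, peel off one simple root at a time) is a plan, not an argument: the Bernstein--Lusztig correction term $(q-1)(\theta_\mu-\theta_{s_i\mu})/(1-\theta_{-\alpha_i})$ does not live in the Hecke algebra but in a localization of $\mathbb{C}[\Lambda]$, so even setting up a recursion to which $\mathrm{tr}$ can be applied termwise requires real care, and you concede that controlling the resulting cancellations ``is where the real work lies'' --- i.e., the key step is missing, with no evidence it would close. The fallback you offer, evaluating $\mathrm{tr}$ through the Plancherel/residue calculus, is precisely Opdam's own proof, which amounts to citing the theorem rather than proving it. As it stands, then, your proposal is a correct and useful reformulation of the statement plus a deferral of its proof; that puts you in the same position as the paper (which also defers entirely to Opdam), but it should not be mistaken for a proof.
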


\subsection{Haglund's identity}
In~\cite{haglund2011polynomial}, Haglund proved a remarkable formula for the bigraded (in $x$- and $y$-degree) Hilbert series of the quotient ring of diagonal coinvariants.  Haglund stated the formula in terms of \emph{Tesler matrices}, which are a simple combinatorial rephrasing of Kostant partitions.   

\begin{theorem}[{\cite[Corollary 1]{haglund2011polynomial}}]\label{thm:Haglund} Write $[k]_{q,t} = (q-1)(1-t)\frac{q^k-t^k}{q-t}$ and
let $\lambda_n \coloneqq n\lambda_{n-1} \in Q^+_n$.  Then
\[\mathrm{Hilb}(\DH_{n-1};q,t) = \left(\frac{1}{(q-1)(t-1)}\right)^{n-1}\sum_{(a_\alpha) \in K(\lambda)} \prod_{\substack{\alpha \in \Phi^+_n \\ a_\alpha > 0}} [a_\alpha]_{q,t}.\]
\end{theorem}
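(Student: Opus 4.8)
The plan is to treat \Cref{thm:Haglund} as Haglund's theorem and to reconstruct its derivation from the symmetric-function theory of diagonal coinvariants rather than to reprove it combinatorially from scratch. The foundation is Haiman's identification of the bigraded Frobenius characteristic of the diagonal coinvariant ring with $\nabla e_{n-1}$, where $\nabla$ is the Bergeron--Garsia operator, diagonal in the modified Macdonald basis $\{\widetilde{H}_\mu\}$ with eigenvalue $T_\mu = \prod_{c \in \mu} q^{a'(c)} t^{l'(c)}$. Since the Hilbert series records the dimensions of the bigraded pieces, and pairing a Frobenius characteristic against $p_1^{\,n-1}=e_1^{\,n-1}=h_1^{\,n-1}$ returns the total dimension degree by degree, I would begin from
\[
    \mathrm{Hilb}(\DH_{n-1};q,t) = \langle \nabla e_{n-1},\, p_1^{\,n-1}\rangle .
\]

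First I would expand the right-hand side as a sum over \emph{Tesler matrices}. Following Haglund, the mechanism is an operator identity that computes $\nabla e_{n-1}$ by iterating a row-addition operator assembled from Macdonald's Pieri/creation operators; tracking which ``root direction'' is selected at each step of the iteration produces exactly one summand for every upper-triangular nonnegative-integer matrix whose hook sums are all equal to $1$. In this expansion the entry in position $(i,j)$ contributes the factor $[a_{ij}]_{q,t}$, and the diagonal bookkeeping accumulates the global prefactor $\left(\tfrac{1}{(q-1)(t-1)}\right)^{n-1}$. Establishing this expansion --- showing that the accumulated eigenvalue contributions from the modified Macdonald polynomials collapse precisely to the stated product of $[a_\alpha]_{q,t}$ factors --- is the technical heart of the argument, and the step I expect to be the main obstacle, since it rests on nontrivial plethystic identities and eigenoperator properties rather than on any bookkeeping I could carry out by hand.

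Finally I would translate the Tesler-matrix sum into the Kostant-partition sum in the statement, using the ``combinatorial rephrasing'' noted before the theorem. A Tesler matrix of size $n-1$ with all hook sums equal to $1$ is in a weight-preserving bijection with a Kostant partition $(a_\alpha)_{\alpha \in \Phi^+_n}$ of $\lambda_n = n\lambda_{n-1}$: the $\binom{n}{2}$ upper-triangular entries biject with the $\binom{n}{2}$ positive roots of $\mathrm{GL}_n$, with entry $(i,j)$ playing the role of the coefficient $a_\alpha$ of $\alpha = e_i - e_{j+1}$, and the hook-sum conditions encoding $\sum_\alpha a_\alpha \alpha = \lambda_n$. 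Under this bijection $\prod_{a_{ij}>0}[a_{ij}]_{q,t}$ becomes $\prod_{\alpha\in\Phi^+_n,\,a_\alpha>0}[a_\alpha]_{q,t}$, yielding the right-hand side of \Cref{thm:Haglund}. As a sanity check I would specialize $q=t=1$: each surviving factor tends to its ordinary value and the formula recovers $\dim \DH_{n-1} = n^{n-2}$, which is exactly the count this identity must supply alongside \Cref{thm:opdam} in the paper's final enumeration.
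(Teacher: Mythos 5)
The first thing to note is that the paper itself does not prove this statement: it is Haglund's theorem, imported by citation as \cite[Corollary 1]{haglund2011polynomial}, and the only gloss the paper adds is the remark that Tesler matrices are a combinatorial rephrasing of Kostant partitions. So there is no internal proof to compare yours against; what you have written is a reconstruction of the cited derivation, and at the level of architecture it is the right one: Haiman's theorem that the bigraded Frobenius characteristic of $\DH_{n-1}$ is $\nabla e_{n-1}$, the reduction $\mathrm{Hilb}(\DH_{n-1};q,t) = \langle \nabla e_{n-1}, h_1^{n-1}\rangle$, Haglund's expansion of that pairing as a sum over upper-triangular matrices with hook sums equal to $1$, and the dictionary to $K(\lambda_n)$. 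But, as you concede, the second step---the operator/plethystic identity that produces the Tesler sum---is precisely Haglund's contribution, and you defer it entirely; so this is a correct plan rather than a proof.

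Moreover, the one step you do carry out explicitly is wrong. Your dictionary sends entry $(i,j)$ of an $(n-1)\times(n-1)$ Tesler matrix to the coefficient of the root $e_i - e_{j+1}$. Take $n=3$ and the Tesler matrix with $b_{11}=b_{22}=1$, $b_{12}=0$: your rule produces $(e_1-e_2)+(e_2-e_3) = e_1-e_3$, which is not $\lambda_3 = 3\lambda_2 = (1,1,-2)$, so your map does not carry Tesler matrices into $K(\lambda_n)$ at all. The correct dictionary sends the off-diagonal entry $(i,j)$, $i<j\le n-1$, to the root $e_i-e_j$ and the \emph{diagonal} entry $(i,i)$ to the root $e_i-e_n$; then the $i$th hook-sum condition is literally the statement that $e_i$ has coefficient $1$ in $\sum_\alpha a_\alpha\alpha$, and the coefficient $-(n-1)$ of $e_n$ is automatic because the diagonal entries of a Tesler matrix sum to $n-1$. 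Finally, your $q=t=1$ sanity check cannot actually have been performed: with the conventions as printed in \Cref{thm:Haglund}, the right-hand side tends to $(-1)^{n-1}n^{n-2}$ rather than $n^{n-2}$ (already for $n=2$ it equals $-1$, while $\mathrm{Hilb}(\DH_1)=1$), because every Kostant partition of $\lambda_n$ has at least $n-1$ nonzero parts and the minimal ones each contribute a factor $(-1)^{n-1}$; the bracket should carry $(1-q)(1-t)$ rather than $(q-1)(1-t)$ against the prefactor $\left((q-1)(t-1)\right)^{-(n-1)}$. Carrying out that check honestly is worth your while: it detects both this sign slip in the transcription and, on small cases, your misassigned bijection.
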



\subsection{Cyclic enumeration}
\label{sec:cyclic_enumeration}

\begin{theorem}[P.~Galashin, T.~Lam, N.~Williams]
\label{thm:subword_count}
\[
\left|R_{\blambda_n}(\FF_q) \right| =  (q - 1)^{2n-2}[n]_q^{n-2} \hspace{1em}\text{ and }\hspace{1em}  |\S_n|=n^{n-2}.
\] 
\end{theorem}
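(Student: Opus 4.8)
The plan is to compute the point count $\left|R_{\blambda_n}(\FF_q)\right|$ as an honest polynomial in $q$ using the trace formula together with the identities of Opdam and Haglund, and then to recover $|\S_n|$ by sending $q\to 1$ in the Deodhar decomposition. First I would record that, by \Cref{eq:cycle}, $\blambda_n = (s_0\cdots s_{n-1})^{n-1}$ is a \emph{reduced} word for the translation $\lambda_n$, which as a weight is $n\lambda_{n-1}\in Q^+$; hence $\ell(\lambda_n)=n(n-1)$ and $T_{\blambda_n}=T_{\lambda_n}$. Applying the point-count formula $\left|R_{\blambda_n}(\FF_q)\right| = q^{\ell(\lambda_n)}\tr\!\left(T_{\lambda_n}^{-1}\right)$ and observing that $\lambda_n=n\lambda_{n-1}$ has only nonnegative coordinates in the fundamental-weight basis (so $(\lambda_n)_+=\lambda_n$ and $(\lambda_n)_-=0$), Opdam's \Cref{thm:opdam} yields $\tr\!\left(T_{\lambda_n}^{-1}\right) = q^{-n(n-1)/2}\sum_{(a_\alpha)\in K(\lambda_n)}\prod_{a_\alpha>0}[a_\alpha]_q$. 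Combining, I get
\[
\left|R_{\blambda_n}(\FF_q)\right| = q^{n(n-1)/2}\sum_{(a_\alpha)\in K(\lambda_n)}\prod_{a_\alpha>0}[a_\alpha]_q .
\]

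The crux is evaluating this Kostant-partition sum. The key observation is that Opdam's single-variable integer is the specialization $t=q^{-1}$ of Haglund's two-variable integer, since $(q-1)(1-q^{-1}) = (q-1)^2/q$ gives $[k]_{q,q^{-1}}=[k]_q$. Setting $t=q^{-1}$ in \Cref{thm:Haglund} with $\lambda=\lambda_n$ therefore collapses the right-hand side to exactly the Opdam sum, so that
\[
\sum_{(a_\alpha)\in K(\lambda_n)}\prod_{a_\alpha>0}[a_\alpha]_q = \left(\tfrac{(q-1)^2}{q}\right)^{n-1}\mathrm{Hilb}\!\left(\DH_{n-1};q,q^{-1}\right),
\]
up to the overall sign that the conventions fix. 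I would then substitute the known specialization of the diagonal-coinvariant Hilbert series, $\mathrm{Hilb}\!\left(\DH_{n-1};q,q^{-1}\right)=q^{-\binom{n-1}{2}}[n]_q^{\,n-2}$ with $[n]_q=(q^n-1)/(q-1)$ (checked against the total dimension $n^{n-2}$ at $q=1$). Assembling the powers of $q$, the exponent $\tfrac{n(n-1)}{2}-(n-1)-\binom{n-1}{2}$ collapses to $0$, leaving $\left|R_{\blambda_n}(\FF_q)\right| = (q-1)^{2n-2}[n]_q^{\,n-2}$.

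For the second assertion I would use the Deodhar decomposition $R_{\blambda_n}(\FF_q)=\bigsqcup_{\u}D_\u(\FF_q)$ with $\left|D_\u(\FF_q)\right|=(q-1)^{s(\u)}q^{t(\u)}$, the union running over distinguished subwords $\u$ of $\blambda_n$ whose total product is $e$. By \Cref{invProd}, the skip reflections $\rru$ of such a $\u$ multiply to $\lambda_n$, hence form a reflection factorization of $\lambda_n$; since $\lambda_n$ has reflection length $2n-2$ by \Cref{eq:cycle}, every such $\u$ satisfies $s(\u)\ge 2n-2$, with equality precisely when the factorization is of minimal length, that is, precisely when $\u\in\S_n$ (invoking \Cref{cor:distinguished} to identify the two notions of distinguished). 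Dividing the polynomial identity by $(q-1)^{2n-2}$ gives
\[
(q-1)^{-(2n-2)}\left|R_{\blambda_n}(\FF_q)\right| = \sum_{\u\in\S_n}q^{t(\u)} + \sum_{s(\u)>2n-2}(q-1)^{s(\u)-2n+2}q^{t(\u)},
\]
and letting $q\to 1$ annihilates the second sum, sends the first to $|\S_n|$, and sends the left side to $[n]_q^{\,n-2}\big|_{q=1}=n^{n-2}$, whence $|\S_n|=n^{n-2}$.

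Since the representation-theoretic machinery (the trace formula, the Opdam and Haglund identities, and the Deodhar decomposition) is taken as given, the genuine work is bookkeeping. The main obstacle is the careful matching of normalizations between Opdam's and Haglund's $q,t$-integers—in particular pinning down the sign and power conventions so that the substitution $t=q^{-1}$ produces the manifestly positive polynomial $(q-1)^{2n-2}[n]_q^{\,n-2}$—together with correctly invoking the known evaluation $\mathrm{Hilb}\!\left(\DH_{n-1};q,q^{-1}\right)=q^{-\binom{n-1}{2}}[n]_q^{\,n-2}$, which is what makes the Haglund step non-circular. By comparison, the $q\to 1$ extraction of $|\S_n|$ is routine once the divisibility bound $s(\u)\ge 2n-2$ has been deduced from the reflection length of $\lambda_n$.
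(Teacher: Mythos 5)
Your proposal is correct and follows essentially the same route as the paper's own proof: the trace formula $\left|R_{\blambda_n}(\FF_q)\right| = q^{\ell(\lambda_n)}\tr(T_{\blambda_n}^{-1})$ combined with Opdam's formula and the $t=q^{-1}$ specialization of Haglund's identity (using $[k]_q=[k]_{q,q^{-1}}$) for the point count, then the Deodhar decomposition, division by $(q-1)^{2n-2}$, and the $q\to 1$ limit for $|\S_n|$. The only difference is that you make explicit two steps the paper leaves implicit---the needed evaluation $\mathrm{Hilb}(\DH_{n-1};q,q^{-1})=q^{-\binom{n-1}{2}}[n]_q^{n-2}$ (Haiman's specialization, which is indeed what keeps the Haglund step non-circular) and the reflection-length argument showing every distinguished $e$-subword has at least $2n-2$ skips with equality exactly on $\S_n$---both of which are accurate fill-ins rather than departures from the paper's argument.
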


\begin{proof}
Since $[k]_q = [k]_{q,q^{-1}}$, we can use Opdam's~\Cref{thm:opdam} and specialize Haglund's~\Cref{thm:Haglund} to conclude that \[\left|R_{{\blambda_n}}(\FF_q)\right| = q^{\ell(\lambda_n)} \tr(T_{\blambda_n^{-1}}) = (q - 1)^{2n-2}[n]_q^{n-2}.\]  Since all maximal distinguished subwords have exactly $2n-2$ skips and all other distinguished subwords have more than $2n-2$ skips, we have
\begin{align*}(q - 1)^{2n-2}[n]_q^{n-2}=\left|R_{{\blambda_n}}(\FF_q)\right| &= \sum_{\u \in \mathcal{D}_{\blambda_n}} \left|D_\u(\FF_q)\right| = \sum_{\u \in \S_n} \left|D_\u(\FF_q)\right| + \sum_{\u \not \in \S_n} \left|D_\u(\FF_q)\right| \\ &= \sum_{\u \in \S_n} (q-1)^{2n-2} q^{(n-1)(n-2)/2} + \sum_{\u \not \in \S_n}  (q-1)^{s(\u)} q^{t(\u)}, \end{align*}
where $s(\u)>2n-2$ for all $\u \not \in \S_n$. Dividing by $(q-1)^{2n-2}$ and letting $q \to 1$ gives $|\S_n|=n^{n-2}$.
\end{proof}

\begin{corollary}[Cayley's formula]
\label{cor:cayley}
$|\E_n|=n^{n-2}.$
\end{corollary}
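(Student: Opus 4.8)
The plan is simply to chain the two principal results proved above. By definition $\E_n$ is the set of vertex-labeled trees on $[n]$, so I only need to relate $\E_n$ to a set whose size is already known. \Cref{thm:main_thm} furnishes a bijection between $\S_n$, the maximal distinguished subwords of $\blambda_n$, and $\E_n$; consequently $|\E_n| = |\S_n|$. \Cref{thm:subword_count} evaluates this common cardinality as $|\S_n| = n^{n-2}$. Composing the two equalities gives $|\E_n| = n^{n-2}$, which is exactly Cayley's formula.

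Concretely, the bijection of \Cref{thm:main_thm} is assembled from two pieces established earlier: the corollary that $\u \mapsto \rru$ is a bijection between $\S_n$ and the cyclic factorizations $\Ft_n$, and \Cref{thm:cyclic}, which identifies $\Ft_n$ with cyclically-embedded vertex-labeled trees. The forward map sends $\u \in \S_n$ to the tree whose edges $(a,b)$ (with $a<b$) are read off from the pairs $(\!(a,b)\!),(\!(\overline{b},a)\!)$ occurring in the skip reflections $\rru$; the inverse is the run-leaf construction of \Cref{def:run_leaves}, which turns a clockwise walk around the cyclic embedding of a tree into a pattern of takes and skips. For the corollary I need nothing beyond the statement that these maps are mutually inverse bijections, together with the numerical value of $|\S_n|$.

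The genuine difficulty lies entirely upstream and not in the corollary itself. All of the combinatorial labor is concentrated in \Cref{thm:main_thm}: one must check that the run-leaf labeling of \Cref{def:run_leaves} always yields a legitimate maximal distinguished subword and that reading off skip reflections inverts it, which in turn rests on \Cref{smallMoves}, \Cref{nest}, and \Cref{locSkip}. The remaining weight sits in \Cref{thm:subword_count}, where the point count $|R_{\blambda_n}(\FF_q)| = (q-1)^{2n-2}[n]_q^{n-2}$ is obtained from Opdam's trace formula (\Cref{thm:opdam}) and Haglund's identity (\Cref{thm:Haglund}), after which the $q \to 1$ specialization isolates the maximal distinguished subwords as the only Deodhar components surviving division by $(q-1)^{2n-2}$. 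Once those inputs are granted, the corollary is a one-line composition of a cardinality-preserving bijection with a known enumeration, and I anticipate no obstacle in writing it down.
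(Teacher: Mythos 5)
Your proposal is correct and takes exactly the same route as the paper: the paper's proof of \Cref{cor:cayley} is the one-line combination of the bijection from \Cref{thm:main_thm} with the enumeration $|\S_n|=n^{n-2}$ from \Cref{thm:subword_count}. Your additional commentary on how the bijection is assembled and where the real work lies is accurate but goes beyond what the corollary itself requires.
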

\begin{proof}
This follows immediately from~\Cref{thm:subword_count,thm:main_thm}.
\end{proof}

\section{Future Work}
\label{sec:future_work}




\subsection{Distinguished subwords}
It would be interesting to give a combinatorial interpretation for \emph{all} distinguished subwords of $\blambda_n$.  For $n=2,3,4,5$, the number of such subwords is $1,4,45,1331$; this sequence does not appear in the Online Encyclopedia of Integer Sequences.

\subsection{Other weights}
There should be a Fuss--Catalan extension~\cite{williamsopac}, using the translation \[\lambda_{m,n}=(m(n-1)+1)\lambda_{n-1}-(m-1)\lambda_1.\]  Maximal distinguished subwords will still be parameterized by trees, but the combinatorics of the run-leaves will be more complicated---the number of maximal subwords will be $(m(n-1)+1)^{n-2}$.

Much more generally~\cite[Conjecture 7.1]{armstrong2012combinatorics}, there should be interesting combinatorics coming from the weight \[\lambda = \sum_{i=1}^{n-1} a_i \alpha_i \text{ with } a_1 > a_2 > \cdots > a_{n-1} \geq a_n=0.\]  In this case, the number of maximal distinguished subwords is~\cite{armstrong2012combinatorics}
\[\prod_{i=1}^{n-1} \big((i+1)a_i-i a_{i+1}\big).\]

\subsection{Relation to Galashin-Lam-Trinh-Williams}

In this section we explore the possibility of a relationship between $R_{\blambda_n}(\FF_q)$ and the rational noncrossing parking functions (and their braid varieties) of~\cite[Section 8.5]{galashin2022rational}.

\begin{definition}
  Let $\u \in \S_n$.  We say that a skip in $\u$ is a \defn{negative} if the corresponding inversion $(\!(a,b)\!)$ in $\rru$ satisfies $a < b$ and $1 \leq (b \mod n) < (a \mod n) \leq n$.  A skip is \defn{positive} if it is not negative.
\end{definition}

\begin{example}
The negative skips are colored purple in~\Cref{fig:main,fig:distinguished}, while the positive skips are left in white.  Observe that there is exactly one negative skip in each column and each row except the last.
\end{example}

\begin{proposition}
\label{prop:negatives}
  Each $\u \in \S_n$ has exactly one negative skip in each column and each row except the last.
\end{proposition}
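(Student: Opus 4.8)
The plan is to read the row and column of each skip directly off the integers $m_j$ of \Cref{locSkip}, and then to determine which $m_j$ belong to negative skips. Fix $\u\in\S_n$ and write $\rru=[r_1,\dots,r_{2n-2}]$ with $r_j=(\!(a_{j-1},a_j)\!)$ and $0=a_0<a_1<\dots<a_{2n-2}=n(n-1)$, as in \Cref{cor:zero} and \Cref{connection}; let $i_j$ be the index of the $j$th skip. Setting $p_{j-1}:=r_1\cdots r_{j-1}$, the chain $a_{j-1}\mapsto a_{j-2}\mapsto\dots\mapsto a_0=0$ gives $p_{j-1}(a_{j-1})=0$, so \Cref{locSkip} reads $m_j=p_{j-1}(a_j)$, where $m_j=i_j+\lfloor (i_j-1)/(n-1)\rfloor$. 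Expressing $i_j$ through its position in the $n\times(n-1)$ array yields $m_j=\mathrm{row}_j\cdot n+\mathrm{col}_j+1$, with $\mathrm{row}_j=\lfloor(i_j-1)/(n-1)\rfloor\in\{0,\dots,n-1\}$ and $\mathrm{col}_j=(i_j-1)\bmod(n-1)\in\{0,\dots,n-2\}$ the (zero-indexed) row and column of the $j$th skip; equivalently $\mathrm{row}_j=\lfloor m_j/n\rfloor$ and $\mathrm{col}_j=(m_j\bmod n)-1$ (so $m_j\not\equiv0\bmod n$). It therefore suffices to show that, over the negative skips, the values $\lfloor m_j/n\rfloor$ are exactly $0,1,\dots,n-2$ and the residues $m_j\bmod n$ are pairwise distinct.

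Next I would identify the negative skips. By \Cref{connection}, $a_j-a_{j-1}<n$, so $a_{j-1}$ and $a_j$ share the same residue mod $n$ unless $[a_{j-1},a_j)$ contains a (necessarily unique) multiple of $n$; thus $r_j$ is negative exactly when $[a_{j-1},a_j)$ contains a multiple of $n$. Since the intervals $[a_{j-1},a_j)$ partition $[0,n(n-1))$, which contains exactly the multiples $0,n,\dots,(n-2)n$, there are exactly $n-1$ negative skips, and $r_j\mapsto t_j$ (where $t_jn$ is the multiple inside $[a_{j-1},a_j)$) is a bijection from the negative skips onto $\{0,1,\dots,n-2\}$.

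For the rows I would show that the negative skip crossing $t_jn$ sits in row $t_j$, i.e. $\lfloor m_j/n\rfloor=t_j$. Negativity gives $a_{j-1}\le t_jn<a_j$, and since $a_j-a_{j-1}<n$ also $a_j<(t_j+1)n$; thus $a_j$ lies in the block $(t_jn,(t_j+1)n)$. The claim is that $p_{j-1}$ does not move $a_j$ out of this block: the only factors $r_i$ ($i<j$) that move a value $\equiv a_j\bmod n$ are those sharing that residue, each shifts it by $|a_i-a_{i-1}|<n$, and following these shifts — they trace the edges at the vertex $a_j\bmod n$ of $\Trr$, exactly as in the clockwise-walk description of \Cref{sec:bijection} — keeps the value inside $(t_jn,(t_j+1)n)$. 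Hence $m_j=p_{j-1}(a_j)$ lies in that block and $\mathrm{row}_j=t_j$; as $t_j$ exhausts $\{0,\dots,n-2\}$, this places exactly one negative skip in each row other than the last, and none in the last.

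The columns are the main obstacle, since $m_j\bmod n=p_{j-1}(a_j)\bmod n$ has no comparable closed form. Here I would use \Cref{cor:pairs}: each edge $\{a,b\}$ of $\Trr$ accounts for exactly two skips — the left and right ends of its pair, one positive and one negative — and the key step is that these two skips occupy the \emph{same} column. This is the assertion, visible in the torus picture of \Cref{sec:subwords}, that the two corners attached to a fixed neighbor lie in one column, and I would extract it from \Cref{locSkip} together with the nesting in \Cref{nest}. Granting it, sending an edge to the common column of its two skips defines a map from the $n-1$ edges to the $n-1$ columns; proving it injective (equivalently, that every column is met) forces it to be a bijection, so each column holds exactly one edge, hence exactly one positive and one negative skip. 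I expect proving that the two skips of an edge share a column, and that distinct edges use distinct columns, to be the heart of the matter; the count and the row statement are comparatively direct consequences of \Cref{locSkip} and \Cref{connection}.
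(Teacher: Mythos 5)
Your reductions are set up correctly, and one piece of the proposal is genuinely nice: the observation that a skip $r_j=(\!(a_{j-1},a_j)\!)$ is negative exactly when $[a_{j-1},a_j)$ contains a multiple of $n$, so that the $n-1$ multiples of $n$ in $[0,n(n-1))$ match the negative skips bijectively, is correct, rigorous, and cleaner than the paper, which obtains the count of negative skips only as a byproduct of its column statement. Likewise the dictionary $\mathrm{row}=\lfloor m_j/n\rfloor$, $\mathrm{col}=m_j\bmod n$ via \Cref{locSkip} is exactly right, and your row statement (the negative skip crossing $t_jn$ lies in row $t_j$) is sharper than what the paper proves. The problem is that both claims you yourself flag as ``the heart of the matter'' are left unproven, and they are where all the content lies. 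For the columns you write ``Granting it'' for the claim that the two skips of a pair occupy the same column, and you additionally leave injectivity of the resulting edge-to-column map open; so as written the column half proves nothing beyond a reduction. The paper's proof consists precisely of the sharper statement that the pair $\rrs,\rrf$ of \Cref{cor:pairs} sits in column \emph{exactly} $k$ (its first sentence, derived from \Cref{nest} and \Cref{locSkip}); with that form, your injectivity worry evaporates, since distinct pairs have distinct indices $k$.

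For the rows, the block-preservation claim ($m_j=p_{j-1}(a_j)$ stays in $(t_jn,(t_j+1)n)$) is true, but your justification carries no weight: the clockwise-walk/torus picture you invoke is presented in \Cref{sec:subwords} as an informal example, explicitly not proven there, and your one concrete assertion --- that only factors sharing the residue of $a_j$ can move it --- controls only the first move, because as soon as one such factor acts, the running value changes residue and a different set of factors becomes relevant. The gap is fillable, but only with the same machinery the paper uses. Concretely: if the negative skip $r_j$ is the left end $\rrs$ of its pair, then by \Cref{cor:pairs} no earlier factor uses the residue of $a_j$, so $m_j=a_j$, which lies in the block by hypothesis. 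If $r_j$ is the right end $\rrf$, then by \Cref{nest} no factor strictly between $\rrs$ and $\rrf$ uses the residue of $a_j$, and no factor before $\rrs$ uses the residue of $k$; hence the only earlier factor moving $a_j$ is $\rrs=(\!(a_{i-1},a_i)\!)$, which shifts it up by $a_i-a_{i-1}=n-(a_j-a_{j-1})$, giving $m_j=a_{j-1}+n$, and this lies in $(t_jn,(t_j+1)n)$ because $t_jn-n<a_{j-1}<t_jn$ (strictness uses $a_{j-1}\not\equiv 0\bmod n$). The same two computations show $m_{\rrs}\equiv m_{\rrf}\equiv k\bmod n$, which is the deferred column claim. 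So your outline converges, once completed, to the paper's \Cref{nest}/\Cref{cor:pairs}/\Cref{locSkip} argument; as submitted, both key steps are genuine gaps.
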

\begin{proof}
  It follows from \Cref{nest} and \Cref{locSkip} that the pair of negative and positive skips $\rrs$, $\rrf$ appear in column $k$.
  Suppose that $t_1= (\!(\bar{a}_0,a_1)\!)$, where $1 \leq a_1 < a_0 \leq n$, is the inversion of the first negative skip in some row of our grid. The next skip has inversion $t_2 = (\!(a_1,a_2)\!)$ or $(\!(\bar{a}_1,a_2)\!)$, where $1 \leq a_2 \leq n$. If $t_2$ is in the same row as $t_1$, then its column number must be greater than $a_1$. It follows that $a_2 > a_1$, so $t_2 = (\!(a_1,a_2)\!)$. If the next skip is again in the same row, then its column number must be greater than $a_2$, so its inversion is $t_3 = (\!(a_2,a_3)\!)$, where $a_2 < a_3 \leq n$. Continuing in this way, we see that there cannot be another negative skip in this row.
  
  It remains to show that the last row of the grid cannot contain a negative skip. The last skip cannot be negative because its inversion is $(\!(a,n )\!)$ by \Cref{nest}. Moreover, every skip in the last row of the grid must be the second in its pair since the pairs occur in the same column. So if there is a negative skip with inversion $(\!(\bar{a}_0, a_1)\!)$ in the last row, it must be in column $a_0$ and the next skip must be in column $a_1$, which contradicts $a_1 < a_0$. So there cannot be a negative skip in the last row.
\end{proof}

We will not recall the definitions of the rational noncrossing parking braid varieties $R_{\mathsf{c}^{n+1}}^{(w)}(\FF_q)$ here, instead referring the interested reader to~\cite{galashin2022rational}.  We will simply describe how to use~\Cref{prop:negatives} to conjecturally break our braid variety $R_{\blambda_n}(\FF_q)$ into pieces that should match the individual components of the noncrossing parking braid varieties (indexed by $w \in S_n$).

\begin{remark}
Minh-T\^am Trinh has constructed certain braid variety variants that bundle together the individual parking braid variety components by enriching the usual definition of braid variety by elements of the unipotent subgroup of $B^+$.  There should be an isomorphism between $R_{\blambda_n}(\FF_q)$ and this variant for the symmetric group $S_{n-1}$ and the braid $\mathbf{c}^{n}$, where $\mathbf{c}$ is the lift of any standard Coxeter element in $S_{n-1}$ to its braid group.
\end{remark}

For $w \in S_{n-1}$, write $\S_n(w)$ for the set of all subwords $\u \in \mathcal{D}_{\blambda_n}$ with negative skips in the positions of the ones in the $(n-1)\times(n-1)$ permutation matrix of $w$, and takes in the positions corresponding to the inversions of $w$ (indices to the left of and above the ones).  Write \[R_{\blambda_n}^{(w)}(\FF_q) = \bigsqcup_{\u \in \S_n(w)} D_\u(\FF_q).\]

\begin{example}
If we fix $w=[1,3,2] \in S_3$, $\S_4(w)$ consists of all distinguished subwords that must use skips in the purple boxes and takes in the green boxes of \raisebox{0\height}{\scalebox{.5}{$\begin{array}{|C{2em}|C{2em}|C{2em}|}\hline \cd & & \\\hline & \cc & \cd \\\hline  &  \cd & \\\hline & & \\\hline \end{array}$}}.
Then $\S_4(w)$ contains three maximal distinguished subwords, and eight distinguished words in total:

\[\scalebox{.5}{$\begin{array}{|C{2em}|C{2em}|C{2em}|}\hline
\cd & & \\\hline
\cc& \cc & \cd \\\hline
\cc&  \cd & \cc \\\hline
& \cc&\cc \\\hline \end{array}\hspace{1em}
\begin{array}{|C{2em}|C{2em}|C{2em}|}\hline
\cd & \cc & \\\hline
\cc & \cc & \cd \\\hline
&  \cd & \cc \\\hline
\cc& &\cc \\\hline \end{array}\hspace{1em}
\begin{array}{|C{2em}|C{2em}|C{2em}|}\hline
\cd & \cc&\cc \\\hline
& \cc & \cd \\\hline
\cc&  \cd &\cc \\\hline
\cc & & \\\hline \end{array}$}
\]
\[
\scalebox{.5}{$\begin{array}{|C{2em}|C{2em}|C{2em}|}\hline
\cd & & \\\hline
\cc& \cc & \cd \\\hline 
&  \cd & \cc\\\hline
& &\cc \\\hline \end{array} \hspace{1em}
\begin{array}{|C{2em}|C{2em}|C{2em}|}\hline
\cd & & \\\hline
& \cc & \cd \\\hline
\cc&  \cd &\cc \\\hline
& \cc& \\\hline \end{array}\hspace{1em}
\begin{array}{|C{2em}|C{2em}|C{2em}|}\hline
\cd &\cc & \\\hline
& \cc & \cd \\\hline
&  \cd &\cc \\\hline
\cc& & \\\hline \end{array}\hspace{1em}
\begin{array}{|C{2em}|C{2em}|C{2em}|}\hline
\cd & & \cc\\\hline
& \cc & \cd \\\hline
\cc&  \cd &\cc \\\hline
& & \\\hline \end{array}$}\]
\[
\scalebox{.5}{$\begin{array}{|C{2em}|C{2em}|C{2em}|}\hline
\cd & & \\\hline
& \cc & \cd \\\hline 
&  \cd &\cc \\\hline
& & \\\hline \end{array}$}.\]
\end{example}

\begin{conjecture}  We have a disjoint decomposition \[R_{\blambda_n}(\FF_q) = \bigsqcup_{w \in S_{n-1}} R_{\blambda_n}^{(w)}(\FF_q).\]
Moreover, for $w \in S_{n-1}$ and $\mathsf{c}_{n-1}=[s_1,\ldots,s_{n-2}]$, we have an isomorphism
\[R_{\blambda_n}^{(w)}(\FF_q) \simeq (\FF_q^\times)^{n-1} \times \FF_q^{\ell(w)} \times R_{\mathsf{c}_{n-1}^{n}}^{(w)}(\FF_q),\] where $R_{\mathsf{c}_{n-1}^{n}}^{(w)}(\FF_q)$ are the noncrossing parking braid varieties of~\cite{galashin2022rational}.
\end{conjecture}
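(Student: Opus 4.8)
The plan is to treat the two assertions separately: the disjoint decomposition is combinatorial and should follow by generalizing \Cref{prop:negatives}, whereas the isomorphism is geometric and is where the genuine difficulty lies. \emph{For the decomposition}, the first step is to extend \Cref{prop:negatives} from the maximal subwords $\S_n$ to all of $\mathcal{D}_{\blambda_n}$: I would show that for every distinguished $e$-subword $\u$ of $\blambda_n$, the negative skips occupy exactly one box in each of the first $n-1$ rows and exactly one box in each column, hence form the support of the permutation matrix of a unique $w = w(\u) \in S_{n-1}$. The row-by-row argument in the proof of \Cref{prop:negatives}---once a negative skip $(\!(\overline{a}_0, a_1)\!)$ with $a_1 < a_0$ occurs in a row, every later skip in that row has strictly increasing column index and so is positive---uses only the local structure of $\rru$ and carries over. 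The points needing new care for a non-maximal, hence non-minimal, factorization $\rru$ are that the last row still carries no negative skip and that no column carries two; I expect both to follow from a conservation count tracking how $\rru$ transports $0$, in the spirit of \Cref{locSkip}, which is insensitive to the extra positive skips. Granting the permutation structure, distinguishedness forces takes precisely at the inversion boxes of $w$, so $\u$ meets the defining conditions of $\S_n(w)$ for this $w$ and no other; this yields $\mathcal{D}_{\blambda_n} = \bigsqcup_{w \in S_{n-1}} \S_n(w)$ and the decomposition of $R_{\blambda_n}(\FF_q)$ into the Deodhar-cell unions $R_{\blambda_n}^{(w)}(\FF_q)$.

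\emph{For the isomorphism}, fix $w$. The $n-1$ negative skips and the $\ell(w)$ forced inversion-takes sit in positions common to every $\u \in \S_n(w)$, so I would try to peel them off using the elementary isomorphisms of Kac--Moody braid varieties: each negative skip should contribute a torus coordinate, giving the factor $(\FF_q^\times)^{n-1}$; each forced take should contribute free affine coordinates, giving $\FF_q^{\ell(w)}$; and the ``wrap-around'' letters created by the $s_{n-1} \to s_0$ periodicity of $\blambda_n$ should likewise collapse to torus and affine factors. What remains should be exactly the data defining the parking braid variety $R_{\mathsf{c}_{n-1}^{n}}^{(w)}(\FF_q)$ of \cite{galashin2022rational} for the Coxeter element $\mathsf{c}_{n-1} = [s_1,\dots,s_{n-2}]$ of $S_{n-1}$. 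Concretely, I would first build a dimension-preserving bijection between $\S_n(w)$ and the product of (the decorated distinguished subwords of $\mathsf{c}_{n-1}^{n}$) with (the peeled coordinates), and then promote it to a variety isomorphism by carrying the flag and brick description through each peeling step.

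The main obstacle is the isomorphism, for two reasons. First, there is no ready dictionary translating the \emph{affine} word $\blambda_n$, with its genuine mod-$n$ wrap-around, into the \emph{finite} word $\mathsf{c}_{n-1}^{n}$; one must prove that the periodicity contributes exactly the stated torus and affine factors and nothing more, which is precisely what makes the passage from $\AS_n$ to $S_{n-1}$ substantive, and this interacts delicately with the factor of two hidden in the definition of $t(\u)$. Second, the target $R_{\mathsf{c}_{n-1}^{n}}^{(w)}(\FF_q)$ is defined only in \cite{galashin2022rational}, so its indexing conventions---the shifts among $n+1$, $n$, and $n-1$, and the meaning of the decoration $w$---must be reconciled with the negative-skip permutation introduced here before any identification is possible. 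A sensible first milestone would be to prove only the refined point count $\left|R_{\blambda_n}^{(w)}(\FF_q)\right| = (q-1)^{n-1} q^{\ell(w)} \left|R_{\mathsf{c}_{n-1}^{n}}^{(w)}(\FF_q)\right|$ by summing $(q-1)^{s(\u)} q^{t(\u)}$ over $\S_n(w)$ and comparing with a $w$-refinement of the Opdam--Haglund computation behind \Cref{thm:subword_count}; matching these sums over all $w$ would confirm the decomposition numerically and sharply constrain the isomorphism. As the remark attributed to Minh-T\^am Trinh suggests, the cleanest route may ultimately factor the comparison through Trinh's enriched braid varieties, which are designed precisely to bundle the parking components together.
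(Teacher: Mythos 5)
The statement you are attempting is not proven in the paper at all: it appears in the Future Work section as a \emph{conjecture}, supported only by \Cref{prop:negatives}, a worked example for $n=4$, and a remark pointing to unpublished constructions of Minh-T\^am Trinh. So there is no paper proof to compare against, and the relevant question is whether your proposal closes the gap. It does not---and you essentially say so yourself. Your plan correctly identifies the two halves of the problem and proposes the natural attack on each, but both halves remain open. For the decomposition, you need (i) that the negative skips of \emph{every} distinguished subword of $\blambda_n$ (not just the maximal ones covered by \Cref{prop:negatives}) form the permutation matrix of some $w \in S_{n-1}$, and (ii) that distinguishedness forces takes exactly at the inversion boxes of that $w$. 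Claim (ii) in particular is asserted in a single sentence with no argument; note that the definition of $\S_n(w)$ \emph{imposes} takes at the inversion positions, so what must actually be shown is that an arbitrary distinguished subword automatically has takes there. That is a statement about how the Deodhar condition propagates along the $n \times (n-1)$ grid, and it does not follow formally from the negative-skip pattern. Until (i) and (ii) are established, even the set-theoretic partition $\mathcal{D}_{\blambda_n} = \bigsqcup_{w \in S_{n-1}} \S_n(w)$, hence the first displayed equation of the conjecture, is unproved.

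For the isomorphism, your proposal describes what a proof would have to do rather than doing it: peel off torus and affine coordinates and check that what remains is $R^{(w)}_{\mathsf{c}_{n-1}^{n}}(\FF_q)$. The two obstacles you name---the absence of a dictionary between the affine word $\blambda_n$, with its mod-$n$ wrap-around, and the finite word $\mathsf{c}_{n-1}^{n}$, and the reconciliation of indexing conventions with \cite{galashin2022rational}---are precisely the mathematical content of the conjecture, and they are left unresolved. Your proposed milestone, the refined point count $\bigl|R^{(w)}_{\blambda_n}(\FF_q)\bigr| = (q-1)^{n-1}q^{\ell(w)}\bigl|R^{(w)}_{\mathsf{c}_{n-1}^{n}}(\FF_q)\bigr|$, is a sensible and genuinely useful intermediate goal, but a point count over $\FF_q$ cannot by itself produce an isomorphism of varieties, so it would be evidence for the conjecture rather than a proof of it. In short: your program is reasonable and is aligned with how the authors themselves appear to view the problem (including the suggested detour through Trinh's enriched braid varieties), but it is a research plan with the hard steps still open, matching the status of the statement in the paper as a conjecture.
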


\begin{remark}
When $w$ is the identity of $S_{n-1}$, the subwords in $\S_n(e)$ skip all instances of the affine reflection $s_0$ in $\blambda_n$ and there are no required takes (since the identity has no inversions).  Writing $\mathsf{c}_n=[s_1,\ldots,s_{n-1}]$, we immediately have \[R_{\blambda_n}^{(e)}(\FF_q) \simeq (\FF_q^\times)^{n-1} \times R_{\mathsf{c}_n^{n-1}}(\FF_q),\] where $R_{\mathsf{c}_n^{n-1}}(\FF_q)$ is the Fuss-Dogolon braid variety for $S_{n}$, which can easily be shown to be isomorphic to the usual Catalan braid variety $R_{\mathsf{c}_{n-1}^{n}}(\FF_q)$ in $S_{n-1}$. 
\end{remark}

\section*{Acknowledgements}
We thank Elise Catania, Sasha Pevzner, and Sylvester Zhang for organizing the 2023 Minnesota Research Workshop in Algebra and Combinatorics (MRWAC), where this work began.  We thank the Department of Mathematics at the University of Minnesota, Twin Cities, for providing excellent working conditions.  We acknowledge Lee Trent for participating in the early stages of the project.

This work benefited from computations in \texttt{Sage}~\cite{sagemath} and the combinatorics features developed by the \texttt{Sage-Combinat} community~\cite{Sage-Combinat}.

Esther Banaian was supported by Research Project 2 from the Independent Research Fund Denmark (grant no. 1026-00050B). Anh Trong Nam Hoang was supported by the University of Minnesota Doctoral Dissertation Fellowship. Elizabeth Kelley was supported by the National Science Foundation under Award No.\ 1937241. Weston Miller, Jason Stack, and Nathan Williams were partially supported by the National Science Foundation under Award No.\ 2246877.
\bibliographystyle{amsalpha}
\bibliography{cayley}
\end{document}